\numberwithin{equation}{section}
\newtheorem{cor}[equation]{Corollary}
\newtheorem{lem}[equation]{Lemma}
\newtheorem{prop}[equation]{Proposition}
\newtheorem{thm}[equation]{Theorem}
\theoremstyle{definition}
\newtheorem{exas}[equation]{Examples}
\newtheorem{rem}[equation]{Remark}
\newtheorem{rems}[equation]{Remarks}
\def\R{\mathbb R}
\def\Z{\mathbb Z}
\def\ve{\varepsilon}
\def\vf{\varphi}
\newcommand{\id}{\operatorname{id}}
\newcommand{\spec}{\operatorname{spec}}
\newcommand{\supp}{\operatorname{supp}}
\begin{document}

\title{Small eigenvalues of surfaces of finite type}
\author{Werner Ballmann}
\address
{Max Planck Institute for Mathematics,
Vivatsgasse 7, D--53111 Bonn\\}
\email{ballmann\@@mpim-bonn.mpg.de}
\author{Henrik Matthiesen}
\address
{Max Planck Institute for Mathematics,
Vivatsgasse 7, D--53111 Bonn\\}
\email{hematt\@@mpim-bonn.mpg.de}
\author{Sugata Mondal}
\address{Max Planck Institute for Mathematics,
Vivatsgasse 7, D--53111 Bonn\\}
\email{sugata.mondal@mpim-bonn.mpg.de}

\thanks{\emph{Acknowledgments.}
We thank Eberhard Freitag, Ursula Hamenst\"adt, and Werner M\"uller
for helpful comments
and gratefully acknowledge the support and the hospitality of the Max Planck
Institute for Mathematics in Bonn, the Hausdorff Center for Mathematics in Bonn,
and the Erwin Schr\"o\-dinger Institute in Vienna.}

\date{\today}

\subjclass{58J50, 35P15, 53C99}
\keywords{Laplace operator, small eigenvalues, Euler characteristic}

\begin{abstract}
Extending our previous work on eigenvalues of closed surfaces
and work of Otal and Rosas,
we show that a complete Riemannian surface $S$ of finite type
and Euler characteristic $\chi(S)<0$
has at most $-\chi(S)$ small eigenvalues.
\end{abstract}

\maketitle

\section{Introduction}
\label{intro}

For Riemannian metrics on the closed surface $S=S_g$ of genus $g\ge2$,
the eigenvalue $\lambda_{2g-2}=\lambda_{-\chi(S)}$ plays a specific role.
On the one hand, Buser gave examples of hyperbolic metrics
on $S$ such that the first $2g-2$ eigenvalues
\begin{equation*}
  0 = \lambda_0 < \lambda_1 \le \dots \le \lambda_{2g-3}
\end{equation*}
are arbitrarily small \cite[Satz 1]{Bu1}.
On the other hand, Schoen, Wolpert, and Yau proved that there is a
constant $c=c(g)>0$ such that $\lambda_{2g-2} > c$
for any Riemannian metric on $S$ with curvature $K\le-1$ \cite{SWY}.
Buser then showed that, for hyperbolic metrics, the constant $c$
can be chosen to be independent of the genus \cite[Theorem 8.1.4]{Bu2}.
This development culminated in the work of Otal and Rosas,
who showed that $\lambda_{2g-2}>\lambda_0(\tilde S)$
for any analytic Riemannian metric on $S$ with curvature $K\le-1$,
where $\lambda_0(\tilde S)$ denotes the bottom of the spectrum
of the universal covering surface of $S$,
endowed with the lifted Riemannian metric \cite[Th\'eor\`eme 1]{OR}.

Recall that the bottom of the spectrum of the hyperbolic plane is $1/4$
and that we have $\lambda_0(\tilde S)\ge1/4$ if $K\le-1$;
see also \eqref{bottom} below.

Dodziuk, Pignataro, Randol, and Sullivan extended the work of Schoen, Yau, and Wolpert
to the non-compact surfaces $S_{g,p}$ of genus $g$ with $p>0$ punctures
(where $2g+p>2$).
They showed that there is a constant $c=c(2g+p)$ such that complete hyperbolic
metrics on $S_{g,p}$ --of finite or infinite area--
have at most $2g+p-2$ eigenvalues $\lambda$, counted with multiplicity,
with $\lambda\le c$ \cite[Corollary 1.3]{DPRS}.
In \cite[Th\'eor\`eme 2]{OR},
Otal and Rosas improve this for complete hyperbolic metrics of finite area to $c=1/4$. 

At the end of their article, Otal and Rosas discuss the question
whether their results also hold for smooth Riemannian metrics.
In our previous article \cite{BMM}, we showed this for closed surfaces
and sharpened their lower bound $\lambda_0(\tilde S)$. 
In the present article, we generalize their results
to \emph{surfaces of finite type}, more precisely, to surfaces $S$
with compact boundary (possibly empty) with $-\infty<\chi(S)<0$
and complete Riemannian metrics on them (possibly of infinite area),
again with a sharper lower bound.

Recall that a surface $S$ with compact boundary is of finite type if and only if
it is diffeomorphic to a closed surface with $p\ge0$ points and $q\ge0$ open discs removed.
Then $S$ has $p$ ends, represented by the punctures,
and $q$ boundary circles, the boundaries of the deleted open discs.
Note that we are only concerned with the diffeomorphism type of $S$.
Thus a puncture has the same effect as the removal of a closed disc.

A basis of the neighborhoods of an end of $S$
consists of punctured discs around the corresponding deleted point.
We call these punctured discs \emph{funnels} and visualize the surface as a steamboat
with the funnels pointing upwards and the rest of the surface below them.
As already emphasized above, we do not distinguish between different conformal types.
For example, in our terminology, a hyperbolic cusp is a funnel.

We assume that $S$ is endowed with a Riemannian metric
which is complete with respect to the associated distance function.
The area of the metric may be finite or infinite.
We view the Laplacian $\Delta$ of $S$ as an unbounded operator
on the space $L^2(S)$ of square integrable functions on $S$
with domain the space of smooth functions on $S$ with compact support
in the interior $\mathring S$ of $S$.
Our concern is the spectrum of the Friedrichs extension of $\Delta$,
which we call the \emph{spectrum} of $S$.
If the boundary of $S$ is empty, a case which we include in our discussion,
this is the usual spectrum of $S$.
Otherwise it is the Dirichlet spectrum of $S$.

For any Riemannian manifold $M$, with or without boundary, denote by $\lambda_0(M)$
\emph{the bottom of the spectrum} of the Laplacian on $M$; that is,
\begin{equation}\label{bottom}
  \lambda_0(M) = \inf R(\vf),
\end{equation}
where $\vf$ runs over all non-zero smooth functions on $M$ with compact support
in the interior of $M$ and $R(\vf)$ denotes the \emph{Rayleigh quotient} of $\vf$, 
\begin{equation}\label{raydef}
  R(\vf) = \frac{\int_M |\nabla\vf|^2}{\int_M\vf^2}.
\end{equation}
As we mentioned above, the bottom of the spectrum of the hyperbolic plane is $1/4$.
The bottom of the spectrum of the Euclidean plane is $0$.

To state the main result of the present article,
we need to introduce one more notion.
Let $S$ be a surface of finite type, with or without boundary,
endowed with a complete Riemannian metric.
Set
\begin{equation}
  \Lambda(S) = \inf_\Omega \lambda_0(\Omega),
\end{equation}
where the infimum is taken over all domains $\Omega$ in $S$
which are diffeomorphic to an open disc, annulus, or cross cap.
Note that any such domain can be lifted to $\tilde S$ or a cyclic quotient of $\tilde S$,
and hence we have
\begin{equation}\label{bro}
  \Lambda(S) \ge \lambda_0(\tilde S)
\end{equation}
by a result of Brooks \cite[Theorem 1]{Br2} (see  Remark \ref{rems}.\ref{broo} below).
In \cite{BMM} we showed that, on a closed surface $S$ with $\chi(S)<0$,
a Riemannian metric has at most $-\chi(S)$ eigenvalues $\lambda$
which are \emph{small} in the sense of $\lambda\le\Lambda(S)$.
The main result of this article is an extension of the latter result
to surfaces of finite type.

\begin{thm}\label{small}
A complete Riemannian metric on a surface $S$ with compact boundary (possibly empty)
and with $-\infty<\chi(S)<0$ has at most $-\chi(S)$ eigenvalues $\lambda$,
counted with multiplicity, with $\lambda\le\Lambda(S)$.
\end{thm}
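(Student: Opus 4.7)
The plan is to argue by contradiction along the lines of \cite{OR,BMM}, with the additional task of handling the non-compactness of $S$. Assume that $S$ carries $N := -\chi(S)+1$ Dirichlet eigenvalues $\le \Lambda(S)$, and let $\varphi_1,\dots,\varphi_N$ be associated orthonormal eigenfunctions spanning a subspace $V \subset L^2(S)$; every nonzero $\psi \in V$ then has Rayleigh quotient $R(\psi) \le \Lambda(S)$.

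The analytic heart of the argument is the following. If $\psi \in V$ is an eigenfunction with eigenvalue $\lambda \le \Lambda(S)$ and $\Omega$ is a connected component of $\{\psi \ne 0\}$, then $\psi|_\Omega$ is a Dirichlet ground state of $\Omega$, so $\lambda_0(\Omega) \le \lambda$. If in addition $\Omega$ is diffeomorphic to a disc, an annulus, or a cross cap, then $\lambda_0(\Omega) \ge \Lambda(S) \ge \lambda$ by the definition of $\Lambda(S)$, so equality holds everywhere and strong rigidity is available. The topological ingredient, following \cite{BMM}, is a combinatorial, Courant-type argument inside $V$: by dimension counting on $V$ against the topological types of nodal domains (each non-simple piece saving a definite amount of Euler characteristic), one produces a $\psi \in V$ admitting at least $N$ pairwise disjoint simple nodal domains. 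Since a disc contributes $+1$ and an annulus or cross cap contributes $0$ to the total Euler characteristic, the existence of $N$ disjoint simple pieces inside $S$ forces $N \le -\chi(S)$, contradicting $N = -\chi(S)+1$.

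The non-compactness of $S$ is the principal obstacle and I would handle it by exhaustion. Take compact subsurfaces $S_1 \subset S_2 \subset \cdots$ obtained from $S$ by truncating each funnel along a smooth circle, so that $\chi(S_n) = \chi(S)$. The inequality $\Lambda(S) \ge \lambda_0(\tilde S)$ coming from Brooks' theorem \eqref{bro} implies that the Rayleigh quotient of any function supported sufficiently far out in the ends is bounded below by $\Lambda(S)$; hence a standard cut-off argument converts the $N$ small eigenvalues of $S$ into $N$ quasi-eigenvalues of $S_n$ below $\Lambda(S)+o(1)$ as $n \to \infty$. Applying the nodal-domain/Euler-characteristic argument on each $S_n$ with Dirichlet conditions along both $\partial S$ and the artificial truncation circles, and letting $n \to \infty$, then yields the required contradiction.

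The hardest step, I expect, is to rework the combinatorial nodal-domain argument of \cite{BMM} in the presence of boundary: both the original boundary $\partial S$ and the truncation circles on $S_n$ enter the counting, and the rigidity that occurs when $\lambda_0(\Omega) = \Lambda(S)$ must be reconciled with the Dirichlet conditions imposed on the artificial boundary. In tandem, the quasi-mode approximation that transfers eigenvalues from $S$ to $S_n$ must be sharp enough that all $N$ small eigenvalues survive strictly below $\Lambda(S)+o(1)$, which is where the separation of $\Lambda(S)$ from the bottom of the spectrum of the end neighborhoods, via \eqref{bro}, is essential.
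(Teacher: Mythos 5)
Your proposal has two genuine gaps, one topological and one analytic. The topological heart is mischaracterized: the argument of \cite{BMM} (and of Otal--Rosas) is not a Courant-type count producing a $\psi$ with many disjoint \emph{simple} nodal domains, and the implication you draw from it is false --- discs, annuli and cross caps have Euler characteristic $+1$ or $0$, so a surface of any topology contains arbitrarily many pairwise disjoint such pieces, and no contradiction with $N\le-\chi(S)$ follows from their existence. What the actual argument needs is the opposite statement: for \emph{every} nonzero $\psi$ in the span (not just eigenfunctions, whose nodal sets may be badly behaved for merely smooth metrics, whence the approximate nodal sets $Z_\psi(\ve)$), at least one approximate nodal component is incompressible with fundamental group containing $F_2$ (\cref{eul}); the number of such components is bounded by $-\chi(S)$ via \cref{sub} and \eqref{eular}; a stable ``type'' $(S,S_\psi^+,S_\psi^-)$ is attached to $\psi$ (\cref{charac}, \cref{isoto}), with $\psi$ and $-\psi$ having the same characteristic but different types; and finally S\'evennec's Borsuk--Ulam-type lemma applied to the projectivized span gives $\dim\mathbb E\le-\chi(S)$. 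None of this labelling/covering machinery appears in your outline, so even granting everything else, the contradiction you announce does not materialize.

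The exhaustion-plus-quasi-mode reduction also fails at exactly the delicate point. Cutting off the $N$ eigenfunctions produces, at best, $N$ eigenvalues of the truncated surface $S_n$ below $\Lambda(S)+\delta_n$ with $\delta_n\to0$, whereas the compact-case theorem for $S_n$ bounds the number of eigenvalues $\le\Lambda(S_n)$. Since domains of $S_n$ are domains of $S$ one only knows $\Lambda(S_n)\ge\Lambda(S)$, typically with $\Lambda(S_n)\to\Lambda(S)$, and there is no way to compare $\delta_n$ with $\Lambda(S_n)-\Lambda(S)$; so the scheme loses precisely the borderline eigenvalues $\lambda=\Lambda(S)$ that the theorem (stated with $\le$) must cover --- this is the same threshold that forces the separate unique-continuation argument in the equality case of \cref{eul}. (A smaller point: the compact case with Dirichlet conditions on boundary circles is not in \cite{BMM}, which treats closed surfaces; it is part of what the present theorem establishes, so invoking it on $S_n$ is partly circular.) The paper avoids exhaustion altogether: it works directly on $S$, controls the approximate nodal components inside the funnels by normalizing cross sections, stabilizes the data in $(\ve,K)$ (\cref{eulim}), and then runs the S\'evennec argument; your funnel truncation would have to be replaced by, or upgraded to, an argument of that kind.
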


With $p,q$ as further up,
the case $p=q=0$ corresponds to closed surfaces, treated in \cite{BMM}.
The case $p>0$, $q=0$ (with orientable $S$)
extends \cite[Corollary 1.3]{DPRS} of Dodziuk, Pignataro,
Randol, and Sullivan and \cite[Th\'eor\`eme 2]{OR} of Otal and Rosas
to arbitrary complete Riemannian metrics on such surfaces.
The case $p=0$, $q>0$ corresponds to the Dirichlet spectrum
of compact surfaces with non-empty boundary.
\cref{small} implies also the following extension of the above result
of Dodziuk, Pignataro, Randol, and Sullivan.

\begin{cor}\label{coro}
Let $S$ be a surface of finite type with compact boundary (possibly empty),
endowed with a complete hyperbolic metric of infinite area
such that the boundary of $S$ is weakly convex, that is,
such that the geodesic curvature of the boundary
with respect to the inner normal is non-negative.
Then $S$ has at most $-\chi(S)$ eigenvalues,
counted with multiplicity.
\end{cor}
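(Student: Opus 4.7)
The plan is to deduce the corollary directly from \cref{small} by showing (i) $\Lambda(S) \ge 1/4$ and (ii) every eigenvalue of the Friedrichs Laplacian on $S$ satisfies $\lambda < 1/4$. Together these give $\lambda \le \Lambda(S)$ for every eigenvalue, so \cref{small} bounds their number by $-\chi(S)$.

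For (i), the universal Riemannian cover $\tilde S$ develops isometrically onto a simply connected open subset of $\H^2$; weak convexity of $\partial S$ lifts to local, hence (by simple connectivity in the Hadamard space $\H^2$) global, convexity of $\partial \tilde S$, so $\tilde S$ is in fact a convex subdomain of $\H^2$. In any case, a smooth function with compact support in the interior of $\tilde S$ extends by zero to $\H^2$ without changing its Rayleigh quotient, whence $\lambda_0(\tilde S) \ge \lambda_0(\H^2) = 1/4$. By \eqref{bro}, $\Lambda(S) \ge 1/4$.

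For (ii), I would apply Persson's characterization $\lambda_0^{\mathrm{ess}}(S) = \sup_K \lambda_0(S \setminus K)$, with $K$ ranging over compact subsets of $S$. Choose $K$ large enough to contain a neighborhood of $\partial S$ and to leave outside only (truncations of) the hyperbolic ends of $S$. Each such end embeds isometrically in an amenable (in fact cyclic) quotient of $\H^2$: a horocycle quotient in the case of a cusp end, a complete hyperbolic cylinder in the case of a funnel end. By Brooks' theorem each of these amenable quotients has $\lambda_0 = \lambda_0(\H^2) = 1/4$, and extending compactly supported functions from the end by zero to the ambient quotient gives $\lambda_0(\text{end}) \ge 1/4$. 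Hence $\lambda_0(S \setminus K) \ge 1/4$, so $\lambda_0^{\mathrm{ess}}(S) \ge 1/4$, and the discrete spectrum of $S$ is contained in $[0, 1/4)$.

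The main obstacle is to rule out embedded eigenvalues in $[1/4, \infty)$, needed if one counts every $L^2$ eigenfunction and not only those in the discrete spectrum. On a complete hyperbolic surface of finite topological type with at least one funnel end, separation of variables in the funnel shows that any $L^2$ solution of $\Delta \vf = \lambda \vf$ with $\lambda \ge 1/4$ must vanish identically there, and unique continuation then propagates this to all of $S$; this is also a consequence of the work of Lax--Phillips and Patterson. With this in hand every eigenvalue satisfies $\lambda < 1/4 \le \Lambda(S)$, and \cref{small} completes the proof.
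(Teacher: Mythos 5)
Your proposal is correct and follows essentially the paper's route: both deduce the corollary from \cref{small} together with $\Lambda(S)\ge\lambda_0(\tilde S)=1/4$ (weak convexity plus Brooks' inequality \eqref{bro}) and with the Lax--Phillips-type fact that a hyperbolic surface possessing an expanding funnel end has no eigenvalues $\ge 1/4$ --- the paper cites Theorem 4.8 of \cite{LP} after cutting along the geodesics homotopic to the boundary circles and decomposing into pairs of pants and funnels, while you sketch the underlying separation-of-variables and unique-continuation argument directly (your Persson/essential-spectrum step is redundant once embedded eigenvalues are excluded). The only point you leave implicit is that the infinite-area hypothesis is precisely what guarantees at least one funnel end, which the paper states explicitly.
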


As for the proof of \cref{coro},
note that the weak convexity of the boundary implies that the shortest curves
in the free homotopy classes of the boundary circles
are closed hyperbolic geodesics in $S$.
Cutting away the pieces between these and the corresponding boundary circles,
we arrive at a hyperbolic surface $S'$ with closed hyperbolic geodesics as boundary.
Then we can decompose $S'$ in the standard way into pairs of pants,
some of them possibly with hyperbolic cusps, and expanding funnels of the kind
$\{(x,y)\mid x\ge0,\,y\in\R/L\Z\}$ with hyperbolic metric $dx^2+\cosh(x)^2dy^2$.
Since the area of $S$ is infinite, at least one expanding funnel occurs.
Hence Theorem 4.8 of \cite{LP} applies and shows that $S$ does not have
eigenvalues $\ge1/4$.
(Note that Theorem 4.8 also applies to surfaces;
see the last sentence in Section 4 of \cite{LP}.)
On the other hand, we have $\Lambda(S)\ge\lambda_0(\tilde S)$
by \eqref{bro} and $\lambda_0(\tilde S)=1/4$.
Now \cref{coro} follows from \cref{small}.

The situation for complete hyperbolic metrics of finite area
is much more complicated;
see e.g.\ Section 2 and Conjecture 1 in \cite{Sa}.

\begin{rems}\label{rems}
\begin{inparaenum}[1)]
\item\label{buse}
The bound $-\chi(S)$ in \cref{small} and \cref{coro} is optimal.
Indeed, the construction of Buser in \cite{Bu1} applies to surfaces $S$
with compact boundary (possibly empty) and $-\infty<\chi(S)<0$
and shows that, for any $\ve>0$, there is a complete hyperbolic metric
on any such $S$ with closed hyperbolic geodesics as boundary circles
such that $S$ has (at least) $-\chi(S)$ eigenvalues $\lambda$,
counted with multiplicity, with $\lambda<\ve$.
Furthermore, if $S$ is not compact,
the metric can be chosen to have finite or infinite area.

\item\label{broo}
Under a Riemannian covering of complete and connected Riemannian manifolds,
the bottom of the spectrum of the covered manifold is at most the bottom of the spectrum
of the covering manifold; see e.\,g.\,\cite[p.\,101]{Br2}.
Brooks showed that, under a normal Riemannian covering of complete and connected
Riemannian manifolds with an amenable group of covering transformations,
the bottom of the spectrum does not change \cite[Theorem 1]{Br2}.
Now the relevant arguments of Brooks in the proof of Theorem 1 in \cite{Br2}
and of Sullivan in the proof of Theorem 2.1 in \cite{Su} remain valid
in the more general case of complete Riemannian manifolds with boundary,
and thus \eqref{bro} follows.

In general, we do not have $\Lambda(S)>\lambda_0(\tilde S)$.
For example, if $S$ is a non-compact complete hyperbolic surface of finite type,
then $\Lambda(S)=\lambda_0(\tilde S)=1/4$.
However, the inequality is strict if $S$ is closed and hyperbolic \cite{Mo}.
More generally, it is strict for any compact Riemannian surface
with negative Euler characteristic, see \cite{BMM2}.

\item\label{ess}
Besides $\lambda_0(\tilde S)$ and $\Lambda(S)$,
there is another constant which is of interest in our context.
Recall that the spectrum of $S$ is the disjoint union
of its discrete and essential parts; see \cref{anapre}.
Denote by $\lambda_{\rm ess}(S)$ the bottom of the essential spectrum of $S$.
Since funnels in surfaces of finite type are diffeomorphic to open annuli,
we have $\Lambda(S)\le\lambda_{\rm ess}(S)$.

For non-compact complete hyperbolic surface of finite type, equality holds.
However, any non-compact surface $S$ of finite type with $\chi(S)<0$
carries complete Riemannian metrics with $\Lambda(S)<\lambda_{\rm ess}(S)$
and an arbitrary large number of eigenvalues $<\lambda_{\rm ess}(S)$;
see Example \ref{exesp}.\ref{many}.
In Example 4.1 of \cite{BCD}, Buser, Colbois, and Dodziuk construct examples
of hyperbolic surfaces $S$ of infinite type which have infinitely many
eigenvalues $<\lambda_{\rm ess}(S)$.

\item\label{ess2}
In Examples \ref{exesp}.\ref{emesp} and \ref{exesp}.\ref{emesp2}
we show that non-compact surfaces of finite type with compact boundary
carry complete Riemannian metrics with $K\le-1$, of finite and infinite area,
with empty essential spectrum.
Such metrics have infinitely many eigenvalues.
\end{inparaenum}
\end{rems}

In the proof of our main result,
our line of arguments is different from the classical one
of Buser \cite{Bu1},  Schoen, Wolpert, Yau \cite{SWY},
and Dodziuk, Pignataro, Randol, Sullivan \cite{DPRS},
who rely on decompositions of the surface into appropriate pieces
and monotonicity properties of eigenvalues.
We follow the strategy of Otal and Rosas in \cite{OR},
which involves a careful examination of topological properties of the nodal lines
and domains of finite linear combinations of eigenfunctions.
In our situation of smooth Riemannian metrics,
such nodal lines and domains may not be as regular as
in the case of analytic Riemannian metrics as considered by Otal and Rosas,
where eigenfunctions are analytic, hence also finite linear combinations of them.
We investigate approximate nodal lines and domains instead and,
for that reason, have to face a number of additional problems
before we get the main argument of Otal and Rosas to work.
This line of proof requires extending our corresponding arguments in \cite{BMM}
from closed surfaces to surfaces of finite type.
Moreover, in the non-compact case,
Otal and Rosas use the rather special behaviour of nodal lines along hyperbolic cusps.
There is no analogous description of nodal lines in our more general situation.

The main part of the proof of \cref{small} is concerned with topological properties
of approximate nodal domains and their asymptotic behaviour.
The analytical part of the proof of \cref{small} is concentrated in Lemma \ref{eul}.
To prepare the proof of \cref{eul},
we need some prerequisites from analysis which we present in \cref{anapre}.
In particular, we extend Cheng's Theorem 2.5 in \cite{Che}
on nodal lines of solutions of Schr\"odinger equations
to the case of surfaces with smooth boundary; see \cref{cheng} below.

Mutatis mutandis, our arguments remain valid for Schr\"odinger operators $\Delta+V$,
where the potential $V$ is non-negative or, more or less equivalently,
bounded from below.
Thus the analog of \cref{small} holds also for such operators.

\section{Prerequisites from topology}
In this section, we collect some results about the topology of surfaces.
We assume throughout that the concerned surfaces
have empty or piecewise smooth boundaries.

\begin{prop}\label{funfin}
The interior of a surface $S$ is of finite type
if and only if the fundamental group of $S$ is finitely generated. \qed
\end{prop}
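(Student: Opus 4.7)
The plan is to prove the two implications separately. The forward direction is essentially a matter of unwinding definitions, while the reverse is the substantive one and I would carry it out via a compact-exhaustion argument combined with the classification of surfaces applied to the ends.

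For the easy direction, assume $\mathring S$ is of finite type, hence diffeomorphic to a closed surface $\Sigma$ with $p$ points and $q$ open discs removed. Then $\mathring S$ is homotopy equivalent to a compact surface with boundary obtained from $\Sigma$ by removing small open disc neighborhoods of the $p$ points (or to $\Sigma$ itself when $p=q=0$); in any case it has the homotopy type of a finite $CW$-complex, so $\pi_1(\mathring S)$ is finitely generated. Since the smooth boundary $\partial S$ admits a collar, the inclusion $\mathring S\hookrightarrow S$ is a homotopy equivalence and $\pi_1(S)=\pi_1(\mathring S)$.

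For the reverse direction, I would work entirely on $\mathring S$ and exhaust it by compact submanifolds with smooth boundary, $K_1\subset K_2\subset\cdots$, obtained as regular sublevel sets of a proper smooth exhaustion function. Since $\pi_1(\mathring S)$ has a finite generating set, suitable loop representatives are contained in some $K_N$, hence the inclusion-induced map $\pi_1(K_n)\to\pi_1(\mathring S)$ is surjective for all $n\ge N$. Surface groups that are finitely generated are also finitely presented, so by enlarging $N$ to absorb a finite list of relations (and using Tietze moves realised by homotopies supported in a compact set) one can arrange the maps $\pi_1(K_n)\to\pi_1(\mathring S)$ to be isomorphisms for $n\ge N'$.

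The core obstacle, which I expect to take the most care, is identifying each connected component $U$ of the complement $\mathring S\setminus K_{N'}$ with a collar of an end. Each such $U$ is a non-compact surface whose boundary is a single circle $\gamma\subset\partial K_{N'}$, so a Seifert--van Kampen decomposition expresses $\pi_1(\mathring S)$ as an amalgam of $\pi_1(K_{N'})$ and $\pi_1(U)$ along $\pi_1(\gamma)\cong\Z$. Combined with the isomorphism $\pi_1(K_{N'})\xrightarrow{\cong}\pi_1(\mathring S)$, this forces $\pi_1(U)$ to be either trivial or infinite cyclic generated by $\gamma$ (and one can exclude the trivial case by absorbing the bounded disc into a larger $K$). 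Applying the classification of compact surfaces to an exhaustion of $U$ by compact subsurfaces with boundary, all of whose $\pi_1$'s inject into $\pi_1(U)=\Z$, then identifies $U$ diffeomorphically with $S^1\times[0,\infty)$ or the half-open Möbius band. Since $\partial K_{N'}$ has finitely many components, there are only finitely many such ends, and adjoining a point at infinity to each realises $\mathring S$ as a closed surface minus finitely many points and closed discs, i.e., as a surface of finite type.
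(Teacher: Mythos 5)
The paper records this proposition as a standard fact (it is stated with a \qed and no argument), so there is no internal proof to compare with; your forward direction is fine, but the substantive reverse direction has two genuine gaps as written. First, the step upgrading surjectivity of $\pi_1(K_n)\to\pi_1(\mathring S)$ to an isomorphism ``by enlarging $N$ to absorb a finite list of relations'' does not work for an arbitrary exhaustion: killing the finitely many relators only shows that the \emph{image} of $\pi_1(K_N)$ in $\pi_1(K_n)$ maps isomorphically onto $\pi_1(\mathring S)$, not that $\pi_1(K_n)\to\pi_1(\mathring S)$ is injective. Concretely, exhaust $\R^2$ by $K_n=$ (disc of radius $n$) minus a small open disc around a point $p_n$ with $|p_n|=n+\tfrac12$ (nested after small adjustments): every $\pi_1(K_n)\cong\Z$ surjects onto the trivial group and is never injective, no matter how large $n$ is. The correct repair is not to enlarge $N$ but to \emph{modify the subsurfaces}: absorb the relatively compact components of $\mathring S\setminus K_n$ (equivalently, cap off boundary circles that bound discs outside $K_n$), so that each $K_n$ becomes incompressible; only then does finite generation give eventual $\pi_1$-isomorphisms. (Your appeal to ``finitely generated surface groups are finitely presented'' also quietly invokes structure theory of open surfaces that is close to the statement being proved, though it can be sourced independently.)

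Second, the assertion that each component $U$ of $\mathring S\setminus K_{N'}$ is non-compact with a \emph{single} boundary circle is unjustified and false for general exhaustions: components may be relatively compact (as in the example above), and a component may meet $\partial K_{N'}$ in several circles. If a component has two or more boundary circles, the Seifert--van Kampen decomposition is a graph of groups with a cycle, and one must argue (via Britton's lemma/Bass--Serre normal forms, or an Euler characteristic count as in the paper's \cref{sub}) that this contradicts surjectivity of $\pi_1(K_{N'})\to\pi_1(\mathring S)$; this step is missing. Once these points are fixed, the rest goes through, except for a small slip at the end: a non-compact surface with one boundary circle and infinite cyclic fundamental group is $S^1\times[0,\infty)$ only --- the ``half-open M\"obius band'' alternative does not occur (a M\"obius-type end would have either empty compact boundary or free fundamental group of rank $\ge2$), which is in fact convenient, since only annular ends can be capped off by points or discs to produce the finite-type model surface.
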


Among the surfaces with boundary (possibly empty)
whose interior is of finite type,
we singled out those with compact boundary in the introduction.

For $n\ge2$, denote by $F_n$ the free group in $n$ generators
and recall that the commutator subgroup of $F_2$ is isomorphic to $F_\infty$.

\begin{prop}\label{amabel}
For a non-closed surface $S$, the following are equivalent:
\begin{compactenum}[1)]
\item
The fundamental group of $S$ is cyclic.
\item
The fundamental group of $S$ is amenable.
\item
The fundamental group of $S$ does not contain $F_2$ as a subgroup.
\item
The interior of $S$ is an open disc, annulus, or cross cap. \qed
\end{compactenum}
\end{prop}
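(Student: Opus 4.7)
The plan is to prove the chain (1) $\Rightarrow$ (2) $\Rightarrow$ (3) $\Rightarrow$ (1) together with the equivalence (1) $\Leftrightarrow$ (4). The crucial preliminary observation is that the fundamental group of any non-closed surface is \emph{free}: for compact surfaces with non-empty boundary this is standard, as they deformation retract onto a wedge of circles, and for non-compact surfaces, with or without boundary, one uses the classical fact that such a surface deformation retracts onto a one-dimensional CW complex.

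Given this, (1) $\Rightarrow$ (2) $\Rightarrow$ (3) are purely group-theoretic: cyclic groups are abelian, hence amenable, while $F_2$ is non-amenable and subgroups of amenable groups are amenable. For (3) $\Rightarrow$ (1), I invoke freeness: every non-cyclic free group, whether of finite rank $\ge 2$ or of infinite rank, contains $F_2$, so excluding $F_2$ forces $\pi_1(S)$ to have rank at most one, i.e., to be trivial or isomorphic to $\Z$.

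For (1) $\Leftrightarrow$ (4), the implication (4) $\Rightarrow$ (1) is immediate, since the open disc has trivial $\pi_1$ and both the open annulus and the open cross cap have $\pi_1\cong\Z$. For (1) $\Rightarrow$ (4), I would apply the classification of surfaces to $\mathring S$. When $\pi_1(\mathring S)$ is trivial, classical results on simply connected surfaces without boundary give $\mathring S\cong\R^2$, the open disc. When $\pi_1(\mathring S)\cong\Z$, the universal cover cannot be $S^2$, which admits no free infinite group action, hence is $\R^2$; writing $\mathring S=\R^2/\Z$ for a free, properly discontinuous $\Z$-action, the two conjugacy classes of such actions, according to whether the generator is orientation-preserving or orientation-reversing, yield the open annulus and the open cross cap respectively.

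The main obstacle I expect is this last classification step, especially for surfaces of infinite type: one must appeal either to the Ker\'ekj\'art\'o--Richards classification of non-compact surfaces, or to the classification of free properly discontinuous $\Z$-actions on $\R^2$ up to topological conjugacy. The remaining implications reduce to standard group-theoretic facts once freeness of $\pi_1(S)$ is in place.
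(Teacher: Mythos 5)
Your argument is correct; note that the paper offers no proof of this proposition at all—it is stated with a \qed as a standard fact—so there is nothing to compare against beyond checking your reconstruction. The group-theoretic implications (1)$\Rightarrow$(2)$\Rightarrow$(3)$\Rightarrow$(1) are fine once you know $\pi_1(S)$ is free (and hence torsion-free, so ``cyclic'' means trivial or $\Z$), and the freeness claim for non-closed surfaces is indeed classical; you also implicitly use that the inclusion $\mathring S\hookrightarrow S$ is a homotopy equivalence, which is harmless given the (piecewise smooth) boundary collar. The only step with real content is (1)$\Rightarrow$(4), and there you can lighten the load you identify as the ``main obstacle'': instead of the Ker\'ekj\'art\'o--Richards classification of all non-compact surfaces or the classification of free properly discontinuous $\Z$-actions on $\R^2$, observe that a cyclic fundamental group is in particular finitely generated, so by \cref{funfin} the interior of $S$ is of finite type, i.e.\ a closed surface with finitely many points and open discs removed; among such surfaces the only ones with trivial or infinite cyclic fundamental group are the open disc, the open annulus, and the open cross cap, which is an elementary Euler-characteristic/orientability check. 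This keeps the whole proof at the level of the finite-type classification already used throughout the paper, rather than invoking the deeper classification of $\Z$-actions on the plane.
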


We say that a curve in a manifold is a \emph{Jordan curve} if it is properly embedded.
Note that Jordan curves are closed as subsets of the ambient manifold.
The next assertion is Corollary A.7 in \cite{Bu2} (in the orientable case).

\begin{prop}\label{lemdisc}
Any null-homotopic Jordan loop in a surface $S$ bounds an embedded disc in $S$. \qed
\end{prop}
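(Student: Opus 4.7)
The plan is to lift $\gamma$ to the universal cover $\tilde S$, apply the Jordan--Schoenflies theorem there to produce a disc in $\tilde S$ bounded by the lift, and then push that disc back down to $S$ by showing that the covering projection is injective on it.

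First, I would lift $\gamma$ to the universal cover $\pi\colon\tilde S\to S$. Because $\gamma$ is null-homotopic, every lift of $\gamma$ is a closed loop in $\tilde S$, and since $\gamma$ is embedded in $S$ and the deck action is free, $\pi$ restricts to a homeomorphism from one such lift $\tilde\gamma$ onto $\gamma$. In particular $\tilde\gamma$ is a Jordan loop in the simply connected surface $\tilde S$. The classical Jordan--Schoenflies theorem, applied in $\tilde S$ (which is homeomorphic to $S^2$, $\R^2$, or the open disc), then produces a closed embedded disc $\overline{\tilde D}\subset\tilde S$ with $\partial\overline{\tilde D}=\tilde\gamma$. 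In the non-compact cases I would take $\tilde D$ to be the bounded component of $\tilde S\setminus\tilde\gamma$; in the case $\tilde S=S^2$ I would choose the component of the complement that does not contain any other lift of $\gamma$ (such a component exists because the other lifts of $\gamma$ are disjoint from $\tilde\gamma$, see below).

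The heart of the argument is to verify that $\pi$ is injective on $\overline{\tilde D}$, which will make $\pi(\overline{\tilde D})\subset S$ the required embedded disc bounded by $\gamma$. Suppose $x\neq y$ in $\overline{\tilde D}$ satisfy $\pi(x)=\pi(y)$, so that $y=gx$ for some non-trivial deck transformation $g$. Since $\pi|_{\tilde\gamma}$ is injective, the set-wise stabilizer of $\tilde\gamma$ in $\pi_1(S)$ is trivial, and therefore $g\tilde\gamma\cap\tilde\gamma=\emptyset$. Thus $g\overline{\tilde D}$ and $\overline{\tilde D}$ are two closed Jordan discs in $\tilde S$ whose boundaries are disjoint; this forces them to be either disjoint or nested, and since they meet by assumption one contains the other, say $g\overline{\tilde D}\subset\tilde D$. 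But then $g$ maps $\overline{\tilde D}$ into itself, so Brouwer's fixed point theorem gives a fixed point of $g$, contradicting the freeness of the deck action.

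The main obstacle I would expect is the careful choice of the Schoenflies disc when $\tilde S=S^2$ (that is, when $S$ is $S^2$ or $\R P^2$), where ``inside'' and ``outside'' are a priori symmetric, together with the nesting/disjointness dichotomy for the discs $\overline{\tilde D}$ and $g\overline{\tilde D}$; once one knows that distinct lifts of $\gamma$ are disjoint Jordan curves in $\tilde S$, however, this dichotomy is standard plane/sphere topology. Everything else is routine once one has Jordan--Schoenflies in a simply connected surface and the freeness of the deck action.
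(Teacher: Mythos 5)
Your proof is correct in substance, but it is a genuinely different route from the paper's: the paper offers no argument at all and simply quotes Corollary A.7 of Buser's book (where the statement is established by the combinatorial/isotopy techniques for curves on surfaces, in the orientable case). Your covering-space argument --- lift $\gamma$ to a Jordan loop $\tilde\gamma$ in the simply connected cover, apply Jordan--Schoenflies there, then push the disc down after checking that $\pi$ is injective on it via disjointness of the translates $g\tilde\gamma$, the disjoint-or-nested dichotomy, and Brouwer's fixed point theorem against freeness of the deck action --- is self-contained, short, and works verbatim in the non-orientable case, which the citation covers only implicitly; what it costs is the analysis of the exceptional cover $\tilde S=S^2$ and some care with boundaries, which a quotable general-position statement like Buser's avoids.

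Two points should be made explicit to close the argument. First, the standing assumption of this section allows $S$ to have non-empty piecewise smooth boundary, and then $\tilde S$ is a simply connected surface \emph{with boundary}, so it need not be $S^2$, $\R^2$, or the open disc; since $\mathring S\subset S$ is a homotopy equivalence, you should either restrict to $\gamma\subset\mathring S$ (which is all that is used in this paper: the loops fed into the proposition lie in interiors of subsurfaces of $\mathring S$) and run your argument in $\mathring S$, or treat loops meeting $\partial S$ separately, e.g.\ by doubling. Second, in $\tilde S=S^2$ the dichotomy ``disjoint boundary circles implies the closed discs are disjoint or nested'' is false as a general statement (two overlapping caps whose union is $S^2$ intersect without nesting); what rescues the step is exactly your normalization that $\overline{\tilde D}$ is the complementary component avoiding the other lift, so that $\overline{\tilde D}$, being connected and disjoint from $g\tilde\gamma$, lies either inside $g\tilde D$ --- in which case $g^{-1}$ maps $\overline{\tilde D}$ into itself and Brouwer contradicts freeness --- or in the complement of $g\overline{\tilde D}$, in which case no identification occurs. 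In $\R^2$ (and the open disc) your dichotomy is correct as stated, provided one notes that $g\tilde D$ is again the complementary component with compact closure. With these two remarks spelled out, your proof is complete.
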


\begin{cor}\label{annulus}
Let $c_0$ and $c_1$ be Jordan loops in an annulus $A$
which represent the generator of the fundamental group of $A$
(up to orientation) and which do not intersect.
Then $c_0$ and $c_1$ are the boundary circles of an embedded annulus $A'$ in $A$. \qed
\end{cor}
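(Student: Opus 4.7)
The plan is to reduce the corollary to the standard fact that an essential Jordan loop in an annulus separates it into two subannuli, and then to iterate that fact.

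As an auxiliary claim I would first prove: if $c$ is a Jordan loop representing the generator of $\pi_1(B)$ in an annulus $B$, then $B\setminus c$ has exactly two connected components, each forming together with $c$ an embedded subannulus of $B$ with $c$ as one of its boundary circles. One way to establish this is to pass to the universal cover $\widetilde B\cong\R\times I$ (where $I$ is an interval, open or (half-)closed according to the boundary of $B$); because $c$ is properly embedded and represents a generator, each connected component of its preimage is a properly embedded arc in $\widetilde B$ joining the two ends of the strip. A Schoenflies-type argument shows that such an arc cuts $\widetilde B$ into two pieces, each itself a strip homeomorphic to $\R\times I'$ for a subinterval $I'\subset I$, and quotienting by the $\Z$-action of deck transformations yields the claimed two-subannulus decomposition of $B$. (Inessential double-point configurations that arise in any auxiliary transversality argument can be removed with the help of \cref{lemdisc}, which lets one surger away null-homotopic bigons.)

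With the auxiliary claim in hand, the corollary follows in two applications. Apply the claim to $c_0\subset A$ to obtain a decomposition $A=A_0\cup_{c_0}A_0'$ into two subannuli meeting along $c_0$. Since $c_1$ is disjoint from $c_0$, it lies in the closure of one of these, say $A_0$. The inclusion $A_0\hookrightarrow A$ is a $\pi_1$-isomorphism (both groups are infinite cyclic and generated by the homotopy class of $c_0$, which lies on their common boundary), so $c_1$ also represents the generator of $\pi_1(A_0)$. A second application of the claim, now inside the annulus $A_0$, splits $A_0$ along $c_1$ into two subannuli; the component whose closure meets $c_0$ has precisely $c_0$ and $c_1$ as its two boundary circles and is the required embedded annulus $A'\subset A$.

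The only substantive step is the auxiliary claim; the rest is bookkeeping. The main subtlety I anticipate is handling the boundary of $A$ uniformly (it may be empty, a single circle, or two circles), so that the strips appearing in the universal cover are properly embedded and the resulting subannuli remain annuli in the sense used in the paper. The hypothesis that $c_0$ and $c_1$ are \emph{Jordan} loops (hence closed subsets of $A$) is used exactly here: it guarantees that their lifts are properly embedded arcs, which is what makes the Schoenflies-type separation argument apply.
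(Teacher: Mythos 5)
The paper itself supplies no argument here: \cref{annulus} is stated as a standard consequence of the elementary surface topology it imports from Buser's appendix (the same source as \cref{lemdisc}), so your task was to fill in a folklore proof, and your covering-space argument is a correct and essentially standard way to do it. Two small points deserve more care than you give them. First, since $c$ is $\pi_1$-surjective, its preimage in $\widetilde B\cong\R\times I$ is a \emph{single} deck-invariant properly embedded line, not a family of arcs; this is harmless but should be said correctly, and one must also observe that the deck translation preserves each side of the line (it fixes each boundary line or end of the strip). Second, the step ``quotienting by the $\Z$-action yields the two subannuli'' is the only place where something could silently go wrong: the Schoenflies homeomorphism of a side onto $\R\times I'$ need not be equivariant, so you should either argue that any free, properly discontinuous, orientation-preserving $\Z$-action on a half-plane or closed strip has annulus quotient, or simply note that each side union $c$ is an orientable surface with nonempty boundary and infinite cyclic fundamental group and invoke the classification of surfaces. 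With that, the iteration (the $\pi_1$-isomorphism showing $c_1$ is essential in $A_0$, and taking the side of $c_1$ whose closure contains $c_0$) is fine. Be aware, though, that ``Jordan'' in this paper only means properly embedded, so $c_0$ or $c_1$ may lie on $\partial A$; in that degenerate case your auxiliary claim fails as stated ($A\setminus c_0$ is connected when $c_0$ is a boundary circle), although the corollary itself survives with a trivial modification, and in the paper's applications the curves are level curves in the interior. Also, the aside about removing bigons via \cref{lemdisc} is not needed: the curves are disjoint by hypothesis, so no transversality or surgery ever enters. A shorter route to the same end is to quote the standard fact that every essential simple closed curve in an annulus is ambient isotopic to the core circle, from which the statement is immediate.
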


A subsurface $C\subseteq S$ is called \emph{incompressible} in $S$
if any closed curve in $C$, which is homotopic to zero in $S$,
is already homotopic to zero in $C$.

\begin{lem}\label{sub}
Let $R$ be a compact and connected surface
(with piecewise smooth boundary $\partial R$, possibly empty)
which is not homeomorphic to the sphere.
Let $X$ be a non-empty incompressible closed subsurface of $R$
with piecewise smooth boundary $\partial X$.
Assume that $\partial X\cap\partial R$ is a union
of piecewise smooth segments and circles (possibly empty)
and that $\partial X$ and $\partial R$ are transversal, where they meet.
Then \[\chi(R)\le\chi(X).\]
In the case of equality,
the components of $R\setminus\mathring X$ are annuli, cross caps, and lunes.
More precisely, if $C$ is a component of $R\setminus\mathring X$ that intersects
the boundary of $R$,
then $C$ is an annulus attached to a boundary circle of $X$
or is a lune attached to a part of a boundary circle of $X$.
Otherwise $C$ is an annulus attached to two boundary circles of $X$
or a cross cap attached to a boundary circle of $X$.
\end{lem}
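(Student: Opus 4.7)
The plan is to use the additivity of Euler characteristic applied to the decomposition $R = X \cup Y$, where $Y = R \setminus \mathring X$, and then reduce to a per-component inequality on $Y$; the incompressibility hypothesis on $X$ will be invoked at exactly one step, to exclude a problematic disc case. Taking $\mathring X$ to be the topological interior of $X$ in $R$ so that $Y$ is closed, and triangulating $R$ compatibly with $X$, $Y$, $\partial X$, $\partial R$, and their corners, inclusion--exclusion yields
\[
  \chi(R) = \chi(X) + \chi(Y) - \chi(X \cap Y).
\]
Here $X \cap Y$ is the closure of $\partial X \cap \mathring R$, i.e., the interior boundary of $X$ together with its endpoints on $\partial R$; it is a disjoint union of circles in $\mathring R$ and closed arcs whose endpoints lie on $\partial R$. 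Splitting $Y$ into its connected components $C_1,\dots,C_k$ and setting $\beta_i = X \cap C_i$, the $\beta_i$ are pairwise disjoint and the desired inequality reduces to the component-wise claim $\chi(C_i) \le \chi(\beta_i)$.

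Since circles in $\beta_i$ contribute $0$ to Euler characteristic and arcs contribute $1$, we have $\chi(\beta_i) \ge 0$ always. Hence the bound is automatic whenever $\chi(C_i) \le 0$, that is, whenever $C_i$ is not a disc. The boundary circles of $C_i$ fall into three types: (I) circles entirely in $\mathring R$ (belonging entirely to $\beta_i$), (II) circles entirely on $\partial R$ (disjoint from $\beta_i$), and (III) mixed circles consisting of arcs of $\beta_i$ and of $\partial R$ alternating at corners. A routine case analysis on the topological type of $C_i$ shows that equality $\chi(C_i) = \chi(\beta_i) = 0$ (with $C_i$ not a disc) occurs exactly when $C_i$ is an annulus with both boundary circles of type (I), an annulus with one boundary circle of each of types (I) and (II), or a cross cap with its single boundary circle of type (I). Configurations in which $\beta_i = \emptyset$ are excluded throughout, since they would force $C_i$ to be a connected component of $R$ disjoint from the nonempty $X$, contradicting the connectedness of $R$.

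The remaining case, $C_i$ a disc, is where the incompressibility of $X$ enters. If $\partial C_i$ were a single circle of type (I), it would be null-homotopic in $R$ because it bounds the disc $C_i$; by incompressibility it would be null-homotopic in $X$, and by \cref{lemdisc} it would bound an embedded disc $D \subset X$. But then $C_i \cup D$ would be a $2$-sphere that is both open and closed in $R$, forcing $R$ to be homeomorphic to the sphere, contrary to the hypothesis. Hence $\partial C_i$ must be of type (III) with $k \ge 1$ arcs of $\beta_i$, giving $\chi(\beta_i) = k \ge 1 = \chi(C_i)$, and equality occurs precisely when $k = 1$, i.e., when $C_i$ is a lune. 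Summing the component-wise inequalities over $i$ recovers both the bound $\chi(R) \le \chi(X)$ and the characterization of equality. The main obstacle is the correct set-theoretic bookkeeping at the interface between $X$ and $Y$, especially when $\partial X$ shares arcs or circles with $\partial R$: one must identify $X \cap Y$ with the interior boundary of $X$ (not with the full manifold boundary $\partial X$) so that the count of arcs in $\beta_i$ really equals $\chi(\beta_i)$, and one must distinguish the three types of boundary circles of $C_i$ when passing to $\beta_i$. Once this is in place, the incompressibility hypothesis enters cleanly at a single step of the argument.
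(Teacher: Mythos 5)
Your proof is correct, and its decisive step is the same as the paper's: additivity of the Euler characteristic across the decomposition along $\partial X$, with incompressibility used exactly once to exclude disc components whose boundary is a circle of $\partial X$, via the embedded-sphere contradiction (null-homotopy in $R$ pushed into $X$, \cref{lemdisc}, then $C_i\cup D$ would be a sphere equal to the connected $R$). Where you genuinely diverge is the handling of the interface with $\partial R$. The paper first replaces $X$ by $Y=X\setminus\mathring U$ for a closed collar $U$ of $\partial R$, so that the interface consists only of circles; Mayer--Vietoris then gives $\chi(R)=\chi(Y)+\chi(R\setminus\mathring Y)$ with no correction term, the inequality follows from the absence of disc and closed components, and a second step is needed afterwards to translate the classification of the components of $R\setminus\mathring Y$ (annuli, cross caps) into the asserted classification of the components of $R\setminus\mathring X$, which is where the lunes enter. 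You instead keep $X$ itself and run inclusion--exclusion, identifying $X\cap Y$ with the closure of $\partial X\cap\mathring R$, so arcs contribute $+1$ and circles $0$; this yields the per-component inequality $\chi(C_i)\le\chi(\beta_i)$ and produces the equality classification (two interface circles: annulus joining two boundary circles of $X$; one interface circle and one circle on $\partial R$: annulus attached to one boundary circle of $X$; interface circle on a cross cap; disc with exactly one interface arc: lune) in a single pass, matching the lemma. The trade-off is the one you name: your route requires the careful bookkeeping that $\beta_i$ is a disjoint union of closed arcs and circles and that $\beta_i\ne\emptyset$ (otherwise $C_i$ would be clopen, forcing $C_i=R$ and $\mathring X=\emptyset$ --- you may want to state it this way rather than as ``a component disjoint from $X$''), which is exactly what the transversality and segment hypotheses provide, whereas the paper's collar avoids all corner bookkeeping in the inequality at the price of the extra back-translation for the equality case. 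Both arguments are at the same level of rigor, so your proposal stands as a valid, somewhat more direct alternative.
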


Here a \emph{lune} is a closed disc $D$ whose boundary is subdivided into two subarcs.
Attaching a lune $D$ to $X$ along $\partial X$ means to glue one of the
subarcs of the boundary of $D$ to an arc in $\partial X$.
Then $X$ is isotopic to $X\cup D$.

\begin{proof}[Proof of \cref{sub}]
We may assume $X\subsetneq R$.
Now by the assumptions on the boundaries of $R$ and $X$,
there is a closed collar $U$ about $\partial R$ in $R$
such that $Y=X\setminus\mathring U$ is a deformation retract of $X$ in $R$.
Observe that $Y$ does not intersect $\partial R$
and that the boundaries of $R$ and $Y$ each are disjoint unions of circles.

If a component $D$ of $R\setminus\mathring Y$ would be a closed disc,
set $c=\partial D$, a circle in $\partial Y$.
If $c$  would be homotopic to zero in $Y$,
then there would be a closed disc $D'$ in $Y$ with $\partial D'=c$.
Thus $D\cup D'$ would be an embedded sphere in $R$.
This is not possible since $R$ is connected and would have to be equal to that sphere. 
Thus $c$ is not homotopic to zero in $Y$, hence neither in $R$
since $Y$ is incompressible in $R$.
This is a contradiction, and hence no component of $R\setminus\mathring Y$ is a disc.
Note also that no component of $R\setminus\mathring Y$ is a closed surface
since $R$ is connected and $Y$ is non-empty.

From the Mayer-Vietoris sequence, we obtain
$\chi(R) = \chi(Y) + \chi(R\setminus\mathring Y)$.
Since no component of $R\setminus\mathring Y$ is a disc or a closed surface,
we have $\chi(R\setminus\mathring Y)\le0$ and hence
\begin{equation*}
  \chi(R) \le \chi(Y) = \chi(X).
\end{equation*}
If $\chi(R)=\chi(Y)$,
we have $\chi(C)=0$ for each component $C$ of $R\setminus\mathring Y$.
Hence each such $C$ is an annulus or a cross cap.
If $C$ does not intersect $U$, then $C$ is also a component of $R\setminus X$.

If $C$ intersects $U$, it contains the corresponding parts of the boundary of $R$.
Since $R$ is connected and $Y$ is non-empty,
$C$ also contains a part of the boundary of $Y$.
Hence the boundary of $C$ has more than one component, and hence $C$ is an annulus.
Therefore $C$ contains precisely one boundary circle of $R$
and intersects only the corresponding part of $U$.

Let $C'$ be a component of $R\setminus\mathring X$ that is contained in $C$.
If $C'$ contains a component of $\partial R$,
then $C=C'$ and $C'$ is an annulus.
If $C'$ intersects a component of $\partial R$ but does not contain it,
then $C\setminus C'$ is a subdomain of $C$
whose boundary components intersects both boundary circles of $C$.
This is possible only if $C\setminus\mathring X$ consists of attached lunes.
\end{proof}

\section{Prerequisites from analysis}\label{anapre}

We let $M$ be a Riemannian manifold, complete or not complete,
connected or not connected, with or without (piecewise smooth) boundary.
We denote by $C^k(M)$ the space of $C^k$-functions on $M$,
by $C^k_{c}(M)\subseteq C^k(M)$
the space of $C^k$-functions on $M$ with compact support,
and by $C^k_{cc}(M)\subseteq C^k_c(M)$
the space of $C^k$-functions on $M$ with compact support
in the \emph{interior $\mathring M$} of $M$, respectively.
In the case where the \emph{boundary $\partial M$} of $M$ is empty,
we have $C^k_{cc}(M)=C^k_c(M)$.
We use the term \emph{smooth} to indicate $C^\infty$.

We let $L^2(M)$ be the space of square-integrable functions on $M$
and recall that $C^\infty_{cc}(M)$ is a dense subspace of $L^2(M)$.
We denote by $H^1(M)$ the space of functions in $L^2(M)$
which have a square-integrable gradient $\nabla f$ in the sense of distributions.
By the latter we mean that we test $\nabla f$ against smooth one-forms on $M$
with compact support in $\mathring M$.
Recall that $H^1(M)$ is a Hilbert space with respect to the $H^1$-norm
and denote by $H^1_0(M)$ the closure of $C^\infty_{cc}(M)$ in $H^1(M)$.

\begin{prop}[Friedrichs extension]\label{friedex}
The Laplacian $\Delta$ is self-adjoint as an unbounded operator on $L^2(M)$
with domain the space of $\vf\in H^1_0(M)$ such that $\Delta\vf\in L^2(M)$
in the sense of distributions.
\end{prop}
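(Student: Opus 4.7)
The plan is to apply the abstract Friedrichs extension theorem to $\Delta$ viewed as a symmetric non-negative operator on $C^\infty_{cc}(M)$, and then to identify the domain of the resulting self-adjoint extension with the one described in the statement.

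First I would note that integration by parts gives, for $\vf,\psi\in C^\infty_{cc}(M)$,
\[
  \la\Delta\vf,\psi\ra_{L^2} = \int_M\la\nabla\vf,\nabla\psi\ra =: Q(\vf,\psi),
\]
with no boundary contribution since $\supp\vf,\supp\psi\subseteq\mathring M$. Thus $Q$ is symmetric and non-negative, and its form norm $Q(\vf,\vf)+\|\vf\|_{L^2}^2$ is the $H^1$-norm, so the closure $\bar Q$ of $Q$ in $L^2(M)$ has form domain $H^1_0(M)$ by the very definition of the latter space. The representation theorem for closed, densely defined, semibounded quadratic forms then produces a unique self-adjoint non-negative operator $A$ on $L^2(M)$ characterized by $\text{dom}(A)\subseteq H^1_0(M)$, by the identity $\la A\vf,\psi\ra_{L^2}=\bar Q(\vf,\psi)$ for $\vf\in\text{dom}(A)$ and $\psi\in H^1_0(M)$, and by the property that $\vf\in H^1_0(M)$ lies in $\text{dom}(A)$ precisely when $\psi\mapsto\bar Q(\vf,\psi)$ extends to an $L^2$-continuous functional on $H^1_0(M)$.

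The substantive step is then to identify $\text{dom}(A)$ with $\{\vf\in H^1_0(M):\Delta\vf\in L^2(M)\}$ and $A$ with the distributional Laplacian. If $\vf\in\text{dom}(A)$, testing the form identity against $\psi\in C^\infty_{cc}(M)$ yields $\Delta\vf=A\vf$ distributionally, hence $\Delta\vf\in L^2(M)$. Conversely, suppose $\vf\in H^1_0(M)$ and $\Delta\vf\in L^2(M)$ in the sense of distributions; then for $\psi\in C^\infty_{cc}(M)$ one has
\[
  |\bar Q(\vf,\psi)| = |\la\Delta\vf,\psi\ra_{L^2}| \le \|\Delta\vf\|_{L^2}\|\psi\|_{L^2},
\]
and by $H^1$-density of $C^\infty_{cc}(M)$ in $H^1_0(M)$ (together with the fact that $H^1$-convergence implies $L^2$-convergence) this bound passes to the limit to hold for all $\psi\in H^1_0(M)$. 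Hence $\bar Q(\vf,\cdot)$ is $L^2$-continuous on $H^1_0(M)$, so $\vf\in\text{dom}(A)$ with $A\vf=\Delta\vf$.

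Virtually all of the proposition is covered by the abstract form-theoretic Friedrichs construction. The only place where the specific nature of $\Delta$ enters is in the identification of $Q$ with the form associated to the distributional Laplacian, together with the density-and-extension argument in the converse direction above, which is the one point I would expect to write out carefully; I do not anticipate a genuine obstacle there.
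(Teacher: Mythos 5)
Your argument is correct and is exactly the route the paper takes: the paper's proof simply notes that $H^1_0(M)$ is dense in $L^2(M)$ and cites the standard Friedrichs/quadratic-form construction (Taylor, Section A.8), whose details --- closing the form $Q(\vf,\psi)=\int_M\la\nabla\vf,\nabla\psi\ra$ with form domain $H^1_0(M)$ and identifying the operator domain via $L^2$-continuity of $\bar Q(\vf,\cdot)$ --- are precisely what you have written out. No discrepancy or gap beyond the tacit (and standard) closability of the form.
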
 

\begin{proof}
Since $H^1_0(M)$ is dense in $L^2(M)$,
this follows immediately from the construction of the Friedrichs extension;
compare with \cite[Section A.8]{T2}.
\end{proof}

We call the spectrum of the Friedrichs extension of $\Delta$ as in \cref{friedex}
the \emph{spectrum} of $M$.
Note that, in the case where $M$ has no boundary,
this is the usual spectrum of $S$.

Set $C^k_{0}(M) = \{\vf\in C^k(M)\mid\vf|_{\partial M}=0\}$,
and let $C^k_{c,0}(M)$ be the space of $C^k$-functions
in $C^k_{0}(M)$ with compact support.
We use a corresponding notation for the H\"older spaces $C^{k,\alpha}(M)$,
where $0<\alpha\le1$.

\begin{lem}\label{dirinc}
If $M$ is complete as a metric space and the boundary of $M$ is piecewise smooth
(possibly empty), then
\begin{equation*}
  C^{0,1}_{0}(M) \cap H^1(M) \subseteq H^1_0(M).
\end{equation*}
\end{lem}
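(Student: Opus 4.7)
The plan is to approximate a given $f \in C^{0,1}_0(M) \cap H^1(M)$ in the $H^1$-norm by smooth functions with compact support in $\mathring M$ via three standard steps: truncation at infinity using completeness, pushing the support off the boundary using the Lipschitz vanishing condition, and mollification.

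First, I would fix a point $x_0 \in M$ and, using completeness, recall that each closed ball $\bar B_n = \bar B(x_0,n)$ is compact. Let $\chi_n$ be $1$-Lipschitz cutoff functions with $\chi_n \equiv 1$ on $\bar B_n$, $\chi_n \equiv 0$ outside $\bar B_{n+1}$, and $0 \le \chi_n \le 1$. Then $f_n := \chi_n f$ is Lipschitz, still vanishes on $\partial M$, and is compactly supported. From $\nabla f_n = \chi_n\nabla f + f\nabla\chi_n$, pointwise $|\nabla\chi_n|\le1$, and the fact that $\chi_n\to1$ and $\nabla\chi_n\to0$ pointwise, dominated convergence with majorant $2(|\nabla f|^2 + f^2)$ (integrable since $f\in H^1(M)$) gives $f_n\to f$ in $H^1(M)$. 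Hence it suffices to approximate compactly supported $f$ in $C^{0,1}_0(M)\cap H^1(M)$.

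Second, for such $f$, let $d(x) = d(x,\partial M)$, which is $1$-Lipschitz, and pick a smooth $\eta_\ve\colon[0,\infty)\to[0,1]$ with $\eta_\ve\equiv0$ on $[0,\ve]$, $\eta_\ve\equiv1$ on $[2\ve,\infty)$, and $|\eta_\ve'|\le2/\ve$. Set $g_\ve := \eta_\ve(d)\,f$, which is Lipschitz with compact support in $\{d\ge\ve\}\subseteq\mathring M$. The crucial observation is that if $f$ is $L$-Lipschitz with $f|_{\partial M}=0$, then for every $x\in M$ and every $y\in\partial M$, $|f(x)|=|f(x)-f(y)|\le L\,d(x,y)$, and taking the infimum gives $|f(x)|\le L\,d(x)$. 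Consequently, on $\{d\le2\ve\}$ the troublesome term $\eta_\ve'(d)f\nabla d$ in $\nabla g_\ve$ is pointwise bounded by $(2/\ve)\cdot L\cdot2\ve = 4L$; combined with the fact that $\{d\le2\ve\}\cap\supp f$ has measure tending to $0$ (the piecewise smooth $\partial M$ has measure zero in $M$), dominated convergence gives $g_\ve\to f$ in $H^1$ as $\ve\to0$.

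Third, $g_\ve$ is Lipschitz with compact support in $\mathring M$, so a standard mollification via a finite cover of $\supp g_\ve$ by coordinate charts in $\mathring M$, subordinate partition of unity, and convolution with a Euclidean mollifier of radius less than the distance from $\supp g_\ve$ to $\partial M$ produces elements of $C^\infty_{cc}(M)$ converging to $g_\ve$ in $H^1(M)$, using that Lipschitz functions are in $W^{1,2}_{\mathrm{loc}}$ and Euclidean mollification converges in that space. A diagonal sequence then gives the required approximation of $f$. The principal obstacle is the $H^1$-convergence in the second step: naively $\eta_\ve'(d)$ blows up like $1/\ve$ near $\partial M$, and the linear bound $|f|\le L\,d$ --- a quantitative version of the hypothesis $f|_{\partial M}=0$ for Lipschitz $f$ --- is precisely what compensates this singularity. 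Completeness is used only in the first step, while the piecewise smoothness of $\partial M$ enters in the second through the well-behavedness of the distance-to-boundary function.
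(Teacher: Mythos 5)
Your proof is correct and follows essentially the same route as the paper's: truncate at infinity with Lipschitz cutoffs (using completeness), push the support off the boundary with cutoffs in $d(\cdot,\partial M)$, exploiting the bound $|f(x)|\le L\,d(x,\partial M)$ that comes from the Lipschitz vanishing condition exactly as the paper's $\vf\le 2B/n$ estimate does, and finally mollify. The only (harmless) difference is that the paper invokes a quantitative area bound of order $1/n$ for the boundary shell, whereas you only use that the shell's measure tends to zero.
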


\begin{proof}
Since the boundary of $M$ is piecewise smooth,
there is a sequence of functions $\chi_n$ in $C^1_c(M)$
such that $0\le\chi_n\le1$ and $|\nabla\chi_n|\le1/n$,
such that $\{\chi_n=1\}$ contains the support of $\chi_{n-1}$ in its interior,
and such that $\cup\{\chi_n=1\}=M$.
With such a sequence, we can reduce the assertion of \cref{dirinc}
to the case of functions in $C^{0,1}_{c,0}(M) \cap H^1(M)$. 

Given a compact set $K\subseteq M$,
there is a sequence of functions $\chi_n$ in $C^1_c(M)$
such that $0\le\chi_n\le1$ and $|\nabla\chi_n|\le Cn$ for some constant $C=C(K)$,
such that $\chi_n=1$ on the set of $x\in K$ with $d(x,\partial M)\ge2/n$,
and such that $\chi_n=0$ on the set of $x\in K$ with $d(x,\partial M)\le1/n$.

Now let $\vf\in C^{0,1}_{c,0}(M) \cap H^1(M)$ and $K=\supp\vf$.
Choose a sequence of functions $\chi_n$ for $K$ as above.
Then $\chi_n\vf\to\vf$ in $H^1(M)$ since the area of the set of $x\in K$
with $1/n\le d(x,\partial M)\le2/n$, which contains $K\cap\supp\nabla\chi_n$,
is bounded by $A/n$ for some constant $A$
and since $\vf\le2B/n$ on this set, where $B$ is a Lipschitz constant for $\vf$.
This reduces the assertion of \cref{dirinc} to the case where the support of $\vf$
is contained in $\mathring M$.
In this case, the assertion follows from smoothing.
\end{proof}

As in the introduction, denote by $\lambda_0(M)=\inf R(\vf)$,
where the infimum is taken over all non-zero $\vf\in C^\infty_{cc}(M)$.
Since $R$ is continuous on $H^1_0(M)\setminus\{0\}$
and $C^\infty_{cc}(M)$ is dense in $H^1_0(M)$, we have
\begin{equation}\label{rayl1}
  \lambda_0(M) = \inf \{ R(\vf) \mid \vf\in H^1_0(M)\setminus\{0\} \}. 
\end{equation}
Hence $\lambda_0(M)$ is the \emph{bottom of the spectrum} of the Laplacian.
By the definition of $\lambda_0$, we also have \emph{domain monotonicity},
\begin{equation}\label{raylm}
  \lambda_0(M) \ge \lambda_0(M')
\end{equation}
for any Riemann manifold $M'$ containing $M$.

\begin{lem}\label{rayl2}
A non-zero $\vf\in H^1_0(M)$ satisfies $R(\vf)=\lambda_0(M)$
if and only if $\vf$ is an eigenfunction of the Laplacian with eigenvalue $\lambda_0(M)$.
\end{lem}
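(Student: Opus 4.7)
The forward implication is the easy half. If $\vf\in H^1_0(M)$ lies in the domain of the Friedrichs extension with $\Delta\vf=\lambda_0(M)\vf$, then the quadratic form associated to the Friedrichs extension satisfies $\int_M|\nabla\vf|^2=\langle\Delta\vf,\vf\rangle_{L^2}=\lambda_0(M)\int_M\vf^2$, so $R(\vf)=\lambda_0(M)$. This uses only the construction of the Friedrichs extension recalled in \cref{friedex} and does not require any regularity of $\vf$ beyond belonging to the domain.

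For the substantive direction, I would argue variationally. Suppose $\vf\in H^1_0(M)\setminus\{0\}$ achieves $R(\vf)=\lambda_0(M)$. By \eqref{rayl1}, $\vf$ minimizes $R$ on $H^1_0(M)\setminus\{0\}$. For any $\psi\in H^1_0(M)$ and $t\in\R$ with $|t|$ small, $\vf+t\psi\ne0$ and the function $t\mapsto R(\vf+t\psi)$ is smooth near $0$ with a minimum at $t=0$. Computing $\frac{d}{dt}|_{t=0}R(\vf+t\psi)=0$ and clearing denominators gives the weak Euler--Lagrange identity
\begin{equation*}
  \int_M \langle\nabla\vf,\nabla\psi\rangle = \lambda_0(M)\int_M \vf\,\psi
  \qquad\text{for all }\psi\in H^1_0(M).
\end{equation*}

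In particular, testing against all $\psi\in C^\infty_{cc}(M)$ shows that $\Delta\vf=\lambda_0(M)\vf$ holds in the sense of distributions on $\mathring M$, so $\Delta\vf\in L^2(M)$. Since also $\vf\in H^1_0(M)$, \cref{friedex} implies that $\vf$ lies in the domain of the Friedrichs extension and the identity $\Delta\vf=\lambda_0(M)\vf$ holds in $L^2(M)$, exhibiting $\vf$ as an eigenfunction with eigenvalue $\lambda_0(M)$.

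The one point requiring a small amount of care is to make sure that the first variation identity above extends from $\psi\in C^\infty_{cc}(M)$ to all $\psi\in H^1_0(M)$, so that combined with \cref{friedex} it places $\vf$ in the domain of the self-adjoint operator rather than only giving a distributional equation in the interior; but this extension is immediate from the density of $C^\infty_{cc}(M)$ in $H^1_0(M)$ and the continuity of both sides of the identity in $\psi$ with respect to the $H^1$-norm. No further regularity theory is needed because the statement of \cref{friedex} characterizes the domain purely through the conditions $\vf\in H^1_0(M)$ and $\Delta\vf\in L^2(M)$ distributionally.
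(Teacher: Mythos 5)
Your argument is correct, but it takes a genuinely different route from the paper. You argue variationally: since $\vf$ minimizes the Rayleigh quotient on $H^1_0(M)\setminus\{0\}$ by \eqref{rayl1}, the first variation yields the weak identity $\int_M\la\nabla\vf,\nabla\psi\ra=\lambda_0(M)\int_M\vf\psi$ for all $\psi\in H^1_0(M)$, whence $\Delta\vf=\lambda_0(M)\vf$ distributionally, and the characterization of the Friedrichs domain in \cref{friedex} then puts $\vf$ in the domain and makes it an eigenfunction; the converse follows from the form identity obtained by density of $C^\infty_{cc}(M)$ in $H^1_0(M)$. The paper instead invokes the spectral theorem: it represents $L^2(M)$ as $L^2(X)$ with $\Delta$ acting as multiplication by a measurable function $f$, notes that $f\ge\lambda_0(M)$ almost everywhere by the definition of $\lambda_0(M)$, and concludes that a minimizer of the Rayleigh quotient must be concentrated on $\{f=\lambda_0(M)\}$, i.e.\ is an eigenfunction (this uses implicitly that the quadratic form of the Friedrichs extension, evaluated on $H^1_0(M)$, is $\int_M|\nabla\vf|^2$). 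Your Euler--Lagrange argument is more elementary and self-contained in that it avoids the spectral representation entirely and leans only on \cref{friedex}, at the cost of the short computation of the first variation; the paper's argument is shorter and exhibits the mechanism (spectral measure supported at the bottom of the spectrum) that also handles both directions at once. Both proofs are complete for the statement at hand.
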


\begin{proof}
By the spectral theorem,
we may represent $L^2(M)$ as the space $L^2(X)$ of square integrable functions
on a measured space $X$ such that $\Delta$ corresponds to multiplication
by a measurable function $f$ on $X$.
By the definiton of $\lambda_0(M)$,
we have $f\ge\lambda_0(M)\ge0$ almost everywhere on $X$.
\end{proof}

For a self-adjoint operator $A$ on a Hilbert space $H$,
the spectrum $\spec A$ of $A$ can be decomposed in several ways.
By definition, the \emph{essential spectrum} $\spec_{\rm ess}A\subseteq\spec A$
consists of all $\lambda\in\R$ such that $A-\lambda\id$ is not a Fredholm operator.
The \emph{discrete spectrum} $\spec_dA$ is the complement,
\begin{equation*}
  \spec_dA=\spec A\setminus\spec_{\rm ess}A.
\end{equation*}
The discrete spectrum consists of eigenvalues of finite multiplicity of $A$
which are isolated points of $\spec A$.
The essential spectrum is a closed subset of $\R$.

The following result shows that the essential spectrum of the Laplacian
only depends on the geometry of the underlying manifold at infinity
and that the essential spectrum of the Laplacian is empty if $M$ is compact.

\begin{prop}\label{esp}
For a complete Riemannian manifold $M$ with compact boundary (possibly empty),
$\lambda\in\R$ belongs to the essential spectrum of $\Delta$
if and only if there is a \emph{Weyl sequence} for $\lambda$, that is,
a sequence of functions $\vf_n$ in $C^\infty_{cc}(M)$ such that \\
\begin{inparaenum}[1)]
\item
for any compact $K\subseteq M$,
$\supp\vf_n\cap K=\emptyset$ for all sufficiently large $n$; \\
\item
$\limsup_{n\rightarrow\infty}\|\vf_n\|_2>0$
and $\lim_{n\rightarrow\infty}\|\Delta\vf_n-\lambda\vf_n\|_2=0$.
\end{inparaenum}
\end{prop}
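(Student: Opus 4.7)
The plan is to pair the classical Weyl criterion for self-adjoint operators --- namely, $\lambda\in\spec_{\rm ess}(A)$ if and only if there exists a singular sequence $\psi_n\in D(A)$ with $\|\psi_n\|_2=1$, $\psi_n\rightharpoonup 0$ weakly, and $\|(A-\lambda)\psi_n\|_2\to 0$ --- with a cutoff and smoothing argument, applied to the Friedrichs extension of $\Delta$ from \cref{friedex}.

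For the ``if'' direction, suppose $(\vf_n)$ satisfies 1) and 2). After passing to a subsequence we may assume $\|\vf_n\|_2\ge c>0$, and rescaling gives $\|\vf_n\|_2=1$. Condition 1) forces $\la\vf_n,f\ra\to 0$ for every $f\in C_c(M)$, hence for every $f\in L^2(M)$ by density, so $\vf_n\rightharpoonup 0$. Since $C^\infty_{cc}(M)\subseteq D(\Delta)$, the sequence $(\vf_n)$ is singular and $\lambda\in\spec_{\rm ess}\Delta$.

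For the ``only if'' direction, start with a singular sequence $\psi_n\in D(\Delta)\subseteq H^1_0(M)$. Essential self-adjointness of $\Delta$ on $C^\infty_{cc}(M)$ --- a Gaffney--Wolf-type theorem that continues to hold because $M$ is complete and $\partial M$ is compact, using \cref{dirinc} near the boundary --- makes $C^\infty_{cc}(M)$ a core for $\Delta$ in the graph norm, so approximating each $\psi_n$ in graph norm produces a singular sequence $\tilde\psi_n\in C^\infty_{cc}(M)$. The identity $\|\nabla\psi_n\|_2^2=\la\Delta\psi_n,\psi_n\ra$ on $H^1_0(M)$ shows that $(\psi_n)$ is bounded in $H^1(M)$, and interior plus Dirichlet boundary elliptic regularity upgrades this to a uniform $H^2_{\mathrm{loc}}$-bound. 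Rellich--Kondrachov then yields $\psi_n\to 0$, and hence $\tilde\psi_n\to 0$, strongly in $H^1(K)$ for every compact $K\subseteq M$. Fix an exhaustion of $M$ by compacta $K_m\subset\mathring K_{m+1}$ with $\partial M\subseteq K_0$ and smooth cutoffs $\chi_m$ equal to $1$ on $K_m$ and $0$ off $K_{m+1}$. Setting $\vf_{m,n}=(1-\chi_m)\tilde\psi_n\in C^\infty_{cc}(M)$, which is supported off $K_m$, the Leibniz expansion
\begin{equation*}
(\Delta-\lambda)\vf_{m,n}=(1-\chi_m)(\Delta-\lambda)\tilde\psi_n-2\nabla\chi_m\cdot\nabla\tilde\psi_n-(\Delta\chi_m)\tilde\psi_n
\end{equation*}
has a first summand of $L^2$-norm at most $\|(\Delta-\lambda)\tilde\psi_n\|_2\to 0$, while the commutator terms, being supported in $K_{m+1}$, vanish in $L^2$ as $n\to\infty$ with $m$ fixed, by the local strong $H^1(K_{m+1})$-decay of $\tilde\psi_n$. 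A diagonal extraction $\vf_k=\vf_{k,n(k)}$ then satisfies $\|\vf_k\|_2\to 1$, $\|(\Delta-\lambda)\vf_k\|_2\to 0$, and $\supp\vf_k\cap K_k=\emptyset$, which is the Weyl sequence demanded by the proposition.

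The principal obstacle is the first step in the ``only if'' direction: showing that $C^\infty_{cc}(M)$ is a core for the Friedrichs extension of $\Delta$, so that the Weyl singular sequence from the abstract spectral theorem can be taken smooth and compactly supported in $\mathring M$. Compactness of $\partial M$ localizes the boundary contribution to a fixed collar where standard elliptic theory on bounded domains with piecewise smooth boundary applies, while completeness of $M$ handles the behaviour at infinity by the classical Gaffney--Wolf argument; once essential self-adjointness is in hand, the inner cutoff $1-\chi_m$ and the diagonalization are straightforward book-keeping of commutator terms.
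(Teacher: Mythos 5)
Your ``if'' direction and the cutoff/diagonalization scheme are fine, but the first step of your ``only if'' direction contains a genuine error: $\Delta$ with domain $C^\infty_{cc}(M)$ is \emph{not} essentially self-adjoint when $\partial M\neq\emptyset$, and consequently $C^\infty_{cc}(M)$ is not a core for the Friedrichs (Dirichlet) extension in the graph norm. Completeness plus compactness of the boundary does not help: the Dirichlet and Neumann Laplacians are distinct self-adjoint extensions of $\Delta|_{C^\infty_{cc}(M)}$, so the minimal operator cannot be essentially self-adjoint. Concretely, if $\vf$ is a Dirichlet eigenfunction on a compact $M$ with smooth boundary, graph-norm approximation by $C^\infty_{cc}(M)$ would place $\vf$ in the closure of the minimal operator, hence in $H^2_0(M)$, forcing $\partial_\nu\vf=0$ along $\partial M$ in addition to $\vf=0$, which contradicts the Hopf boundary point lemma. \cref{dirinc} concerns the form domain $H^1_0(M)$, not the operator domain, and does not give graph-norm density. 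So the smooth compactly supported singular sequence $\tilde\psi_n$ on which the rest of your argument is built need not exist, and this failure occurs exactly in the case the proposition is needed for in this paper, namely the Dirichlet spectrum when $\partial S\neq\emptyset$.

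The gap is repairable by reversing the order of your two operations: first cut off, then smooth. Since $\partial M$ is compact, take $\partial M\subseteq K_0$ and set $v_{m,n}=(1-\chi_m)\psi_n$ directly; these lie in the domain of the Friedrichs extension, vanish on a neighborhood of $\partial M$, and your Leibniz computation goes through verbatim, with the commutator terms controlled by interior elliptic estimates and Rellich on the compact set $\supp\nabla\chi_m$ (which lies in $\mathring M$, so no boundary regularity is needed there). Only afterwards approximate $v_{m,n}$ in graph norm by elements of $C^\infty_{cc}(M)$: this is legitimate because $v_{m,n}$ vanishes near $\partial M$, so its extension by zero lives on a complete manifold without boundary obtained by capping off or doubling along $\partial M$, where essential self-adjointness (Gaffney) does hold; a final fixed cutoff supported away from the cap, together with the $H^1$-convergence that graph-norm convergence implies, brings the approximants back into $C^\infty_{cc}(M)$. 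This corrected route is essentially the elementary argument of B\"ar's Proposition 1, which is all the paper itself invokes for this statement.
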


\begin{proof}
See the elementary argument in the proof of Proposition 1 in \cite{Bae}.
\end{proof}

\begin{exas}\label{exesp}
\begin{inparaenum}[1)]
\item\label{many}
Let $S$ be a non-compact hyperbolic surface without boundary and with finite area.
Replace a simple closed geodesic $c$ in $S$ by a Euclidean cylinder
$C=\{(x,y)\mid 0\le x\le h,\, y\in\R/L\Z\}$
of height $h$ and circumference $L=L(c)$
and smooth out the resulting Riemannian metric appropriately.
Let $\varphi=\varphi(x)$ be a non-vanishing smooth function on $\R$
with support in $[-1,0]$.
Then the support of $\varphi_{k,i}=\varphi(x/k-i)$ is in $[(i-1)k,ik]$
and the Rayleigh quotient of $\varphi_{k,i}$ is $R(\varphi)/k^2$.
Hence, given $\ve>0$, we have $R(\vf_{k,i})<\ve$ if $k^2>R(\varphi)/\ve$.
We may also view $\varphi_{k,i}$ as a smooth function on the cylinder $C$
and the surface $S$ if $h$ is sufficiently large.
More specifically, given $n$, choose $h>nk$.
Then the functions $\varphi_{k,1},\dots,\varphi_{k,n}$ have disjoint supports
in $C$ and Rayleigh quotients $<\ve$.
Hence $S$ has at least $n$ eigenvalues which are $<\ve$.
Since $C$ is a cylinder, we also have $\Lambda(S)<\ve$.
On the other hand,
the essential spectrum of $S$ is still contained in $[1/4,\infty)$,
by \cref{esp}.

\item\label{emesp}
Let $F=\{(x,y)\mid x\ge0,\, y\in\R/L\Z\}$ be a funnel with the expanding
hyperbolic metric $dx^2+\cosh(x)^2 dy^2$.
Let $\kappa\colon\R\to\R$ be a monotonic smooth function with $\kappa(x)=-1$
for $x\le1$ and $\kappa(x)\to-\infty$ as $x\to\infty$.
Suppose that $j\colon\R\to\R$ solves $j''+\kappa j=0$
with initial condition $j(0)=1$ and $j'(0)=0$.
Then $j(x)>\cosh x$ for all $x>1$.
Furthermore, the funnel $F$ with Riemannian metric $g=dx^2+j(x)^2dy^2$
has curvature $K(x,y)=\kappa(x)\le-1$ and infinite area.
By comparison, the Rayleigh quotient with respect to $g$
of any smooth function $\vf$ with compact support in the part $\{x\ge x_0\}$
of the funnel is at least $-\kappa(x_0)/4$.

Let $S$ be a non-compact surface of finite type.
Endow $S$ with a hyperbolic metric which is expanding along its funnels as above.
Replace the hyperbolic metric on the funnels by the above Riemannian metric $g$.
Then the new Riemannian metric on $S$ is complete
and has curvature $K\le-1$ and infinite area.
By \cref{esp} and by what we said above about the Rayleigh quotients,
the essential spectrum of the new Riemannian metric is empty.

\item\label{emesp2}
As a variation of \ref{emesp}),
suppose now that $j$ is the unique solution of $j''+\kappa j=0$
which satisfies the boundary condition $j(0)=1$ and $j(\infty)=0$.
Then $j'(0)<-1$ and $j(x)<\exp(-x)$ for all $x>0$.
The funnel $F$ with Riemannian metric $g=dx^2+j(x)^2dy^2$
has curvature $K(x,y)=\kappa(x)$ and finite area.
Again by comparison, the Rayleigh quotient with respect to $g$
of any smooth function $\vf$ with compact support in the part $\{x\ge x_0\}$
of the funnel is at least $-\kappa(x_0)/4$.

Let $S$ be a non-compact surface of finite type,
and choose $r>0$ such that $\coth(r)=-j'(0)$.
It is not hard to see that $S$ minus the parts $\{x\ge r\}$ of its funnels
carries hyperbolic metrics which are equal to $dx^2+j_0(x)^2dy^2$
along the parts $\{x<r\}$ of its funnels, where $j_0(x)=\sinh(r-x)/\sinh(r)$.
Then $j_0(x)=j(x)$ for $x<\min\{1,r\}$.
Hence any such hyperbolic metric,
restricted to $S$ minus the parts $\{x\ge\min\{1,r\}\}$ of its funnels,
when combined with $g$ along the funnels,
defines a smooth and complete Riemannian metric on $S$
which has curvature $K\le-1$ and finite area.
Again, its essential spectrum is empty,
by \cref{esp} and by what we said above about the Rayleigh quotients.
\end{inparaenum}
\end{exas}

Although we will not need the following consequence of \cref{esp} here,
we state it for general reference.
For a complete Riemannian manifold $M$  with compact boundary (possibly empty),
we denote by $\lambda_{\rm ess}(M)$ the bottom of the essential spectrum of $M$.

\begin{cor}\label{esp2}
Let $M$ be a complete Riemannian manifold with compact boundary (possibly empty)
and finitely many ends.
Assume that $M$ admits a neighborhood $U$ of infinity such that,
for any connected component $C$ of $U$,
the image of $\pi_1(C)$ in $\pi_1(M)$ is amenable.
Then $\lambda_{\rm ess}(M)\ge\lambda_0(\tilde M)$.
\end{cor}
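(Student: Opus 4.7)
The plan is to combine the Weyl-sequence characterization of the essential spectrum in \cref{esp} with Brooks's theorem on amenable coverings, in the form recorded in Remark \ref{rems}.\ref{broo}, and with domain monotonicity \eqref{raylm}. Fix $\lambda\in\spec_{\rm ess}\Delta$ and choose a Weyl sequence $\vf_n\in C^\infty_{cc}(M)$ as in \cref{esp}. After shrinking $U$ if necessary, I would arrange that $U$ has smooth boundary and that its finitely many components $C_1,\ldots,C_k$, viewed with their closures, are complete Riemannian manifolds with compact boundary; since shrinking $U$ only passes to subgroups of the previous images in $\pi_1(M)$, the amenability hypothesis is preserved. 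The support condition forces $\supp\vf_n\subseteq U$ for all large $n$, and because the $C_j$ are pairwise disjoint and clopen in $U$, $\vf_n$ splits as a finite sum $\vf_n=\sum_j\vf_n^j$ with $\vf_n^j\in C^\infty_{cc}(C_j)$ and pairwise disjoint supports. The Pythagorean identities
\begin{equation*}
  \|\vf_n\|_2^2=\sum_j\|\vf_n^j\|_2^2
  \quad\text{and}\quad
  \|\Delta\vf_n-\lambda\vf_n\|_2^2=\sum_j\|\Delta\vf_n^j-\lambda\vf_n^j\|_2^2,
\end{equation*}
together with $\limsup\|\vf_n\|_2>0$ and pigeonhole on the $k$ components, then yield, after passing to a subsequence, an index $j_0$ such that $\psi_n:=\vf_n^{j_0}$ satisfies $\liminf\|\psi_n\|_2>0$ while $\|\Delta\psi_n-\lambda\psi_n\|_2\to0$. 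Set $C=C_{j_0}$.

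Next I would extract $\lambda_0(C)\le\lambda$ from this localized sequence. Since each $\psi_n$ has compact support in $\mathring C$, integration by parts gives
\begin{equation*}
  R(\psi_n)-\lambda=\frac{\int_C\psi_n(\Delta\psi_n-\lambda\psi_n)}{\int_C\psi_n^2},
\end{equation*}
so Cauchy--Schwarz and the Weyl conditions force $R(\psi_n)\to\lambda$, and \eqref{rayl1} gives $\lambda_0(C)\le\lambda$.

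The geometric heart is then to show $\lambda_0(C)\ge\lambda_0(\tilde M)$. Let $\hat C\to C$ be the Riemannian covering associated to the kernel of the inclusion-induced homomorphism $\pi_1(C)\to\pi_1(M)$; it is normal with deck transformation group equal to the image of $\pi_1(C)$ in $\pi_1(M)$, which is amenable by assumption. Brooks's theorem, in the form extended to complete Riemannian manifolds with boundary in Remark \ref{rems}.\ref{broo}, then yields $\lambda_0(\hat C)=\lambda_0(C)$. On the other hand, each connected component of the preimage of $C$ in $\tilde M$ is isometric to $\hat C$, so domain monotonicity \eqref{raylm} gives $\lambda_0(\hat C)\ge\lambda_0(\tilde M)$. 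Chaining, $\lambda\ge\lambda_0(C)=\lambda_0(\hat C)\ge\lambda_0(\tilde M)$, and since $\lambda$ was an arbitrary element of the essential spectrum, the assertion follows.

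The only genuine obstacle is the bookkeeping involved in localizing a Weyl sequence to a single end; the finite-ends hypothesis and the pigeonhole principle dispose of it. After that, the argument is essentially the standard Brooks--Sullivan pattern applied end by end, with the amenability assumption serving only to ensure that no spectrum is lost in passing from $C$ to its cover $\hat C\subseteq\tilde M$.
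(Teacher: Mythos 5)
Your argument is correct and follows essentially the same route as the paper: localize a Weyl sequence from \cref{esp} to a single component $C$ of the neighborhood of infinity and then invoke Brooks's theorem for the amenable group $\Gamma=\mathrm{im}(\pi_1(C)\to\pi_1(M))$, extended to manifolds with boundary. The only (harmless) variation is that you apply Brooks to the cover $\hat C\to C$ and finish with domain monotonicity \eqref{raylm} inside $\tilde M$, whereas the paper lifts the Weyl sequence to the intermediate covering $\Gamma\backslash\tilde M$ and applies Brooks to $\tilde M\to\Gamma\backslash\tilde M$; your treatment of the pigeonhole localization is in fact more careful than the paper's one-line appeal to a subsequence.
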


\begin{proof}
Let $\lambda\in\spec_{\rm ess}\Delta$ and $(\vf_n)$
be a Weyl sequence for $\lambda$ as in \cref{esp}.
Then, by passing to a subsequence if necessary,
we can assume that all $\vf_n$ have support in a connected component $C$
of a neighborhood $U$ of infinity of $M$ as in the assumption.
Then the inclusion $C\to M$ and, with it, all $\vf_n$ can be lifted
to the subcovering $\bar M=\Gamma\backslash\tilde M$
of the universal covering space $\tilde M$ of $M$,
where $\Gamma$ denotes the image of $\pi_1(C)$ in $\pi_1(M)$.
Therefore we have $\lambda\ge\lambda_0(\bar M)$.
Now $\Gamma$ is amenable
and hence $\lambda_0(\bar M)=\lambda_0(\tilde M)$,
by Theorem 1 of \cite{Br2} (extended to manifolds with boundary).
\end{proof}

\begin{rem}
For complete Riemannian surfaces $S$ of finite type with compact boundary,
we have the refinement $\lambda_{\rm ess}(S)\ge\Lambda(S)\ge\lambda_0(\tilde S)$;
see \cref{rems}.\ref{broo} and \ref{rems}.\ref{ess} in the introduction.
\end{rem}

In the case of surfaces without boundary,
the next result is Theorem 2.5 in \cite{Che}.

\begin{thm}\label{cheng}
Let $S$ be a surface with smooth boundary (possibly empty),
endowed with a Riemannian metric.
Let $\vf,V$ be smooth functions on $S$
and suppose that $\vf$ vanishes along the boundary of $S$
and solves the Schr\"odinger equation $(\Delta+V)\vf=0$.
Then the \emph{nodal set} $Z_\vf=\{x\in S\mid \vf(x)=0\}$
of $\vf$ is a locally finite graph in $S$.
Moreover,
\begin{compactenum}[1)]
\item
$z\in Z_\vf\cap\mathring S$ has valence $2n$
if and only if $\vf$ vanishes of order $n$ at $z$.
\item
$z\in Z_\vf\cap\partial S$ has valence $n+1$
if and only if $\vf$ vanishes of order $n$ at $z$.
\end{compactenum}
In both cases, the opening angles between the edges at $z$ are equal to $\pi/n$.
\end{thm}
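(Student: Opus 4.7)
The plan is to reduce to a planar Euclidean setting via isothermal coordinates and invoke the classical local expansion theorem of Bers for solutions of second order linear elliptic equations. The interior statement is exactly Cheng's theorem, and the boundary statement will follow by odd reflection.

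For an interior zero $z\in Z_\vf\cap\mathring S$, I would choose isothermal coordinates on a disc around $z$, in which the metric takes the form $\rho^2(dx^2+dy^2)$ with $\rho>0$ smooth and the equation $(\Delta+V)\vf=0$ turns into the planar Schr\"odinger equation $(\partial_x^2+\partial_y^2)\vf=\rho^2V\vf$ with smooth coefficient. By Bers' local expansion, there exists a nonzero harmonic homogeneous polynomial $P_n$ of degree $n$ equal to the order of vanishing of $\vf$ at $z$ with
\[
  \vf(x,y)=P_n(x,y)+O(r^{n+1}).
\]
Writing $P_n=\Re(\alpha\,\zeta^n)$ with $\zeta=x+iy$ and $\alpha\in\C\setminus\{0\}$, the zero set of $P_n$ consists of $n$ lines through the origin meeting at consecutive angle $\pi/n$, and a standard perturbation argument (tracking the signs of $\vf/r^n$ on small circles around $z$) transfers this picture to $Z_\vf$ near $z$.

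For a boundary zero $z\in Z_\vf\cap\partial S$, I would choose isothermal coordinates that straighten the boundary; such coordinates exist because $\partial S$ is smooth, e.g.\ by doubling $S$ across $\partial S$ and pulling back an isothermal chart around $z$ from the smooth double. A half-neighborhood of $z$ is then mapped to the upper half-disc $\{y\ge0\}$ with $z$ at the origin and $\partial S$ along $\{y=0\}$, and the equation becomes $(\partial_x^2+\partial_y^2)\vf=W\vf$ with $W=\rho^2V$ smooth and $\vf|_{y=0}=0$. Extend $\vf$ by odd reflection across $\{y=0\}$, $\tilde\vf(x,y)=-\vf(x,-y)$ for $y<0$, and $W$ by even reflection. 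From $\vf(x,0)\equiv0$ one has $\partial_x^2\vf(x,0)=0$, and the equation then forces $\partial_y^2\vf(x,0)=W(x,0)\vf(x,0)-\partial_x^2\vf(x,0)=0$; hence the second partials of $\tilde\vf$ extend continuously across $\{y=0\}$, and $\tilde\vf$ is a $C^2$ classical solution of $(\partial_x^2+\partial_y^2)\tilde\vf=\tilde W\tilde\vf$ on the full disc. Bers again yields $\tilde\vf=P_n+O(r^{n+1})$ with $P_n$ harmonic homogeneous of degree $n$ equal to the order of vanishing of $\vf$ at $z$. Since $\tilde\vf$ is odd in $y$, so is $P_n$, forcing $P_n=c\,\Im(\zeta^n)=cr^n\sin(n\theta)$ for some $c\ne0$; its zero set in $\{y\ge0\}$ is the $n+1$ rays $\theta=k\pi/n$, $k=0,\dots,n$, two of which lie in $\partial S$ and $n-1$ of which enter $\mathring S$. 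Another perturbation argument then shows that $z$ has valence $n+1$ in $Z_\vf$, with consecutive opening angles $\pi/n$.

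Local finiteness of $Z_\vf$ as a graph follows because interior zeros of order $\ge2$ and boundary zeros of order $\ge2$ are isolated (Bers' expansion prevents any further such point in a small disc around them), and away from these isolated points $Z_\vf$ is either a smooth one-manifold in $\mathring S$, by the implicit function theorem applied to $\vf=0$, or coincides with the smooth arc $\partial S$ itself. The principal technical obstacle in the whole argument is the regularity of $\tilde\vf$ across the straightened boundary: one must combine the Dirichlet condition $\vf|_{\partial S}=0$ carefully with the Schr\"odinger equation to ensure that enough derivatives of $\tilde\vf$ match across $\{y=0\}$ for Bers' expansion to be applicable to the reflected solution.
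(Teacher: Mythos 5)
Your route is essentially the paper's: isothermal coordinates, odd reflection of $\vf$ and even reflection of the potential across the straightened boundary, the observation that the Dirichlet condition plus the equation forces $\partial_y^2\vf=0$ along $\{y=0\}$ so that the reflected function is a $C^2$ (in fact $C^{2,1}$) strong solution on a full disc, and then Bers' local expansion; identifying the odd harmonic polynomial with $c\,r^n\sin n\theta$ and counting its $n+1$ rays in the closed half-disc is also how the paper concludes. There are, however, two places where your sketch omits an ingredient the paper needs. First, passing from $\vf=P_n+O(r^{n+\alpha})$ to the assertion that $Z_\vf$ near $z$ consists of exactly $2n$ (resp.\ $n+1$) embedded arcs meeting at angles $\pi/n$ is not achieved by ``tracking the signs of $\vf/r^n$ on small circles'': sign changes only give at least $2n$ zeros on each small circle and say nothing about the arcs being embedded, their number being exactly $2n$, or the opening angles at $z$. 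The paper invokes Lemma~2.4 of \cite{Che}, which uses the derivative part of Bers' theorem ($\nabla\vf=\nabla P_n+o(r^{n-1})$) to produce a local $C^1$ diffeomorphism $\Phi$ fixing the origin with $\vf=P_n\circ\Phi$; it is this conjugation that yields the graph structure, the valence count, and the equal angles, and your argument needs it (or an equivalent) as well. Related to this, you never rule out vanishing of infinite order: Bers' expansion presupposes a finite vanishing order $n$, which the paper secures by Aronszajn's unique continuation theorem \cite{Ar}, applied after reflection in the boundary case.

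Second, your construction of boundary isothermal coordinates by doubling $S$ across $\partial S$ is shaky as stated: the doubled metric is in general only Lipschitz along $\partial S$ (smooth only if the boundary is geodesic), so an isothermal chart on the double exists only in the low-regularity (measurable Riemann mapping) sense and is a priori only $C^{1,\alpha}$; with such a chart you would lose the smoothness of $\vf$ up to $\{y=0\}$ on which your reflection and $C^2$-matching computation depends. The paper instead takes isothermal coordinates for the smooth metric on a half-neighborhood of $z$ and then straightens the boundary by a further conformal change (using boundary regularity of conformal maps); you should proceed this way or otherwise justify the boundary regularity of your chart. With these two points repaired, your argument coincides with the paper's proof.
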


\begin{proof}
Recall that non-zero eigenfunctions of the Laplacian
cannot vanish of infinite order at any point; see e.g. \cite{Ar}.
Hence by the main result of \cite{Be}, at any critical point $z\in Z_\vf\cap\mathring S$ of $\vf$,
there are Riemannian normal coordinates $(x,y)$ about $z$,
a spherical harmonic $p=p(x,y)\ne0$ of some order $n\ge2$,
and a constant $\alpha\in(0,1)$ such that
\begin{equation*}
   \vf(x,y) = p(x,y) + O(r^{n+\alpha}),
\end{equation*}
where we write $(x,y)=(r\cos\theta,r\sin\theta)$.
By Lemma 2.4 of \cite{Che}, there is a local $C^1$-diffeomorphism $\Phi$
about $0\in\R^2$ fixing $0$ such that $\vf=p\circ \Phi$.
Note that, up to a rotation of the $(x,y)$-plane, we have
\begin{equation*}
  p=p(x,y)=cr^n\cos n\theta
\end{equation*}
for some constant $c\ne0$.
It follows that the interior nodal set $Z_\vf\cap\mathring S$ of $\vf$
is a locally finite graph with critical points of $\vf$ as vertices
and that the valence of points on $Z_\vf$ is as asserted.

It remains to discuss points $z\in Z_\vf\cap\partial S$.
Since $\dim S=2$, there are isothermal coordinates around $z$, that is,
coordinates $(x,y)$ about $z$
in which the Riemannian metric $g$ of $S$ is conformal to the Euclidean metric $g_0$:
$g=fg_0$ with $f=f(x,y)>0$.
Then, again since $\dim S=2$, the associated Laplacians satisfy $f\Delta=\Delta_0$,
and hence $\vf$ solves  the Schr\"odinger equation $(\Delta_0+fV)\vf=0$
in the domain of the coordinates.

After an appropriate further conformal change of the coordinates,
we can assume that the domain of the coordinates is $B_\ve(0)\cap\{y\ge0\}$
such that $\partial S$ corresponds to $B_\ve(0)\cap\{y=0\}$.
We consider $\vf$ and $W=fV$ as functions on $B^+=B_\ve(0)\cap\{y\ge0\}$,
where $\vf(x,0)=0$, and extend them to functions on $B_\ve(0)$
by setting $\vf(x,y)=-\vf(x,-y)$ and $W(x,-y)=W(x,y)$.
Then $\vf$ and $W$ are $C^{1,1}$ and $C^{0,1}$ on $B_\ve(0)$, respectively,
and $\vf$ solves $(\Delta_0+W)\vf=0$ in $B^+$.
Since the reflection about the $x$-axis is an isometry of the Euclidean plane,
we also have $(\Delta_0\vf)(x,y)=-(\Delta_0\vf)(x,-y)$.
Hence
\begin{equation*}
  (\Delta_0+W)\vf(x,y) = -(\Delta_0\vf)(x,-y) - W(x,y)\vf(x,-y) = 0
\end{equation*}
in $B^-=B_\ve(0)\cap\{y\le0\}$.
Since $\vf=0$ along the $x$-axis, all $x$-derivatives of $\vf$ vanish along the $x$-axis.
Since $\vf$ solves $(\Delta_0+W)\vf=0$,
the second derivative of $\vf$ in the $y$-direction vanishes along the $x$-axis as well,
and hence $\vf$ is $C^{2,1}$.
We conclude that $\vf$ is a strong solution of $(\Delta_0+W)\vf=0$ on $B_\ve(0)$,
and hence the main result of \cite{Be} and (the proof of)  Lemma 2.4 of \cite{Che} applies.
The remaining assertions follow as in the case of $z\in Z_\vf\cap\mathring S$ above.
\end{proof}

We learned from the proof of Theorem 2.3 in \cite{HHT} that the reflection
about the $x$-axis in the Euclidean plane,
which we use in the second part of the above proof,
might be helpful in the discussion of the boundary regularity
of solutions of Schr\"odinger equations.

\begin{cor}\label{cheng2}
In the situation of \cref{cheng},
$Z_\vf$ is a locally finite union of immersed circles,
line segments with both end points on $\partial S$,
rays with one end point on $\partial S$, and lines. \qed
\end{cor}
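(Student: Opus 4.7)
The strategy is to extract a global structure from the local one provided by \cref{cheng}. By that theorem, $Z_\vf$ is a locally finite graph in $S$, and since $\vf$ vanishes along $\partial S$ we also have $\partial S\subseteq Z_\vf$. Thus $Z_\vf$ is the union of its boundary portion (itself a disjoint union of circles and lines) and its interior portion, meeting along finitely-valent vertices.

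The first step is, at every point $z\in Z_\vf$, to identify a canonical set of smooth branches passing through $z$. If $z\in\mathring S$ and $\vf$ vanishes at $z$ to order $n$, then by (the proof of) \cref{cheng} there is a $C^1$-diffeomorphism taking a neighborhood of $z$ in $Z_\vf$ onto the zero set of $r^n\cos(n\theta)$; the latter is the union of $n$ smooth lines through the origin. This groups the $2n$ half-edges at $z$ into $n$ opposite pairs forming $n$ smooth branches through $z$. If $z\in\partial S$ and $\vf$ vanishes there to order $n$, then of the $n+1$ half-edges at $z$ exactly two lie along $\partial S$ and pair with each other (so that $\partial S$ passes smoothly through $z$ as a single branch), while the remaining $n-1$ interior half-edges are unpaired and terminate at $z$.

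The second step is to assemble $Z_\vf$ into maximal immersed curves by concatenation: starting at an interior point of some edge, one moves along the edge and at each vertex encountered continues along the branch that contains the incoming half-edge, stopping only if that half-edge is unpaired (which, by the previous step, happens exactly at a boundary vertex reached from its interior side) or if the curve escapes to infinity. Each edge is traversed exactly once, and local finiteness of the graph passes to local finiteness of this family. Each maximal curve is then classified by the behaviour of its two ends, producing the four cases in the statement: an immersed circle (both ends close up; this includes the compact components of $\partial S$), a segment with both endpoints on $\partial S$, a ray with one endpoint on $\partial S$ and the other going to infinity, or a line (both ends escape to infinity; this includes any non-compact components of $\partial S$).

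The main point to verify, and hence the principal obstacle, is the bookkeeping that turns the local branch decomposition into a genuine global partition: at every vertex, each half-edge is used by exactly one maximal curve, so the procedure neither omits nor double-counts any portion of $Z_\vf$. For interior vertices this uses the $C^1$-smoothness of the branches through the local model and the fact that the pairing of opposite half-edges is canonical; for boundary vertices it uses the smoothness of $\partial S$ together with the observation that interior half-edges genuinely terminate at the boundary (so a ray or segment cannot ``continue through'' a boundary point into the complement of $S$). Once this is established, the classification into the four listed types is immediate from the topology of connected $1$-manifolds.
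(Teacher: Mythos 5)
Your argument is correct and is exactly the intended reading: the paper states \cref{cheng2} without proof, as an immediate consequence of the local structure in \cref{cheng}, namely pairing opposite half-edges at interior vertices via the $C^1$ local model $r^n\cos n\theta$, letting interior half-edges terminate at boundary vertices (where the two boundary half-edges pair smoothly), and concatenating edges into maximal immersed curves classified by their ends. Your bookkeeping, including the appeal to local finiteness to rule out ends that neither close up, hit $\partial S$, nor escape to infinity, fills in precisely the routine details the paper leaves to the reader.
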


\section{Proof of \cref{small}}
Throughout this section,
let $\vf$ be a non-vanishing square integrable smooth function on $S$
which is a finite linear combination of eigenfunctions
with eigenvalues $\le\Lambda(S)$.
The set of zeros of $\vf$,
\begin{equation}\label{nodal}
  Z_\vf := \{x\in S\mid \vf(x)=0\},
\end{equation}
is called the {\it nodal set} of $\vf$.
The connected components of the complement $S\setminus Z_\vf$
are called \emph{nodal domains} of $\vf$.

\begin{lem}\label{lemnae}
With respect to the area element of $S$,
we have $\nabla\vf(x)=0$ for almost any $x\in Z_\vf$.
\end{lem}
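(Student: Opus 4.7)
The plan is to reduce \cref{lemnae} to a general and elementary fact about smooth functions on a surface, requiring only the implicit function theorem and not any analytic input from the eigenfunction structure of $\vf$.

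First I would set $A = \{x \in Z_\vf : \nabla\vf(x) \ne 0\}$; the conclusion of the lemma is equivalent to the assertion that $A$ has area measure zero in $S$. Around any point $x_0 \in A$, the differential $d\vf(x_0)\colon T_{x_0}S \to \R$ is surjective, so the implicit function theorem (applied in local coordinates about $x_0$) produces an open neighborhood $U$ of $x_0$ in $S$ on which $\{y \in U : \vf(y)=0\}$ is a properly embedded $C^\infty$ one-dimensional submanifold of $U$. In particular, $A \cap U$ is contained in this smooth curve, and such a curve has two-dimensional area zero.

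Next I would use second countability of $S$ to cover $A$ by countably many such neighborhoods $U$, exhibiting $A$ as a subset of a countable union of smooth curves in $S$. A countable union of smooth curves has area measure zero, and hence $\nabla\vf(x) = 0$ for almost every $x \in Z_\vf$, as claimed.

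There is no real obstacle here: neither the eigenfunction structure of $\vf$ nor the finer regularity statements of \cref{cheng} are needed, only that $\vf$ is smooth. Exploiting the fact that $\vf$ satisfies the higher-order elliptic equation $\prod_i(\Delta - \lambda_i)\vf = 0$, together with unique continuation and a Bers-type expansion at each zero, would give a sharper structural description of $Z_\vf$ (as a locally finite graph, say), but such machinery is unnecessary for the almost-everywhere statement of \cref{lemnae}.
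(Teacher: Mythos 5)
Your argument is correct: on the set $A=\{x\in Z_\vf \mid \nabla\vf(x)\ne0\}$ the implicit function theorem makes $Z_\vf$ locally a smooth embedded curve, a curve has zero area, and a Lindel\"of/second-countability covering argument shows $A$ is contained in a countable union of such curves, hence is null. You are also right that only smoothness of $\vf$ is needed, not \cref{cheng} or any unique continuation input. The paper takes a slightly different, even shorter route: by the Lebesgue density theorem almost every point of $Z_\vf$ is a point of density of $Z_\vf$, and at any density point one must have $\nabla\vf(x)=0$, since otherwise the first-order Taylor expansion forces $Z_\vf$ to lie, near $x$, in a thin neighborhood of a line, which is incompatible with density one. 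The density-point argument needs no covering step and only differentiability of $\vf$ at the point in question, whereas your IFT argument uses $C^1$ regularity globally on the regular part of $Z_\vf$ but yields a bit more structure (the nodal set is a smooth curve near every regular point, which is the elementary part of the picture that \cref{cheng} refines at critical points). Either proof is acceptable for \cref{lemnae}.
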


\begin{proof}
With respect to the area element of $S$,
the set of points of density of $Z_\vf$ has full measure in $Z_\vf$.
Clearly, $\nabla\vf(x)=0$ in any such point $x$.
\end{proof}

We say that $\ve>0$ is \emph{$\vf$-regular},
if $\ve$ and $-\ve$ are regular values of $\vf$.
For any $\ve>0$, we call
\begin{equation}\label{defzef}
  Z_\vf(\ve) := \{x \in S\mid |\vf(x)| \leq \ve \}
\end{equation}
the \emph{$\ve$-nodal set} of $\vf$.
We are only interested in the case where $\ve$ is $\vf$-regular.
Then ${Z_\vf}(\ve)$ is a subsurface of $S$ with smooth boundary,
may consist of more than one component,
and the boundary components of ${Z_\vf}(\ve)$ are embedded smooth circles
and lines along which $\vf$ is constant $\pm\ve$.

\begin{lem}\label{lemfe}
For any $\vf$-regular $\ve>0$, consider the function $\vf_\ve$
\begin{equation*}
  \vf_\ve(x) = \begin{cases}
  \vf(x)-\ve \:&\text{if $\vf(x)\ge\ve$}, \\
  \vf(x)+\ve &\text{if $\vf(x)\le-\ve$}, \\
    \phantom{\vf(x)} 0 &\text{otherwise}.
 \end{cases}
\end{equation*}
Then $\vf_\ve\in H^1(M)$ and $\lim_{\ve\rightarrow0}\vf_\ve=\vf$ in $H^1(M)$.
\end{lem}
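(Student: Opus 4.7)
The plan is to verify both assertions by a dominated-convergence argument, once one has the correct formula for the weak gradient of $\vf_\ve$. First I would note that $\vf\in H^1(M)$: indeed, $\vf$ is a finite sum of eigenfunctions $\vf_i$ of the Friedrichs extension with eigenvalues $\lambda_i\le\Lambda(S)$, and each such eigenfunction lies in $H^1_0(M)\subseteq H^1(M)$ with $\|\nabla\vf_i\|_2^2=\lambda_i\|\vf_i\|_2^2<\infty$, so the same is true of the finite linear combination $\vf$.

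Next I would observe that $\vf_\ve=\eta_\ve\circ\vf$, where
\begin{equation*}
  \eta_\ve(t) = \sign(t)\,\max(|t|-\ve,\,0)
\end{equation*}
is a globally Lipschitz function on $\R$ with $\eta_\ve(0)=0$ and derivative (defined off $\pm\ve$) given by the indicator $\mathbf{1}_{\{|t|>\ve\}}$. By the standard chain rule for Sobolev functions post-composed with a Lipschitz map, $\vf_\ve\in H^1(M)$ and
\begin{equation*}
  \nabla\vf_\ve = \mathbf{1}_{\{|\vf|>\ve\}}\,\nabla\vf \qquad \text{a.e.}
\end{equation*}
Alternatively, since $\ve$ is $\vf$-regular, $\{\vf=\pm\ve\}$ is a smooth hypersurface of measure zero, and the formula can be verified directly by testing $\vf_\ve$ against smooth compactly supported vector fields and integrating by parts on the three open regions where $\vf_\ve$ is smooth, the boundary contributions cancelling because $\vf=\pm\ve$ there while $\vf_\ve=0$ on the adjacent region.

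For the convergence in $H^1(M)$, the $L^2$-part is immediate: $|\vf_\ve-\vf|\le\min(|\vf|,\ve)\le|\vf|$ pointwise, so $\vf_\ve\to\vf$ in $L^2(M)$ by dominated convergence. For the gradient part, the formula above yields
\begin{equation*}
  \|\nabla\vf-\nabla\vf_\ve\|_2^2 = \int_{\{|\vf|\le\ve\}}|\nabla\vf|^2.
\end{equation*}
As $\ve\to0$ (through $\vf$-regular values), the integrand $\mathbf{1}_{\{|\vf|\le\ve\}}|\nabla\vf|^2$ converges pointwise to $\mathbf{1}_{Z_\vf}|\nabla\vf|^2$, which vanishes almost everywhere by \cref{lemnae}. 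Since it is dominated by $|\nabla\vf|^2\in L^1(M)$, dominated convergence finishes the proof.

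The only delicate point is justifying the chain rule formula for $\nabla\vf_\ve$; everything else is routine. I would expect to handle it by the direct integration-by-parts approach, since $\vf$-regularity of $\ve$ gives genuine smooth hypersurfaces as interfaces and removes any technical worry about the non-smoothness of $\eta_\ve$ at $\pm\ve$.
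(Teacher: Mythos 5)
Your proposal is correct and follows essentially the same route as the paper: establish the weak gradient formula $\nabla\vf_\ve=\mathbf{1}_{\{|\vf|>\ve\}}\nabla\vf$ (which the paper simply asserts and you justify via the Lipschitz chain rule or integration by parts), then get $L^2$- and gradient-convergence by dominated convergence, using \cref{lemnae} to kill the contribution of $Z_\vf$. The extra care you take with the truncation $\eta_\ve$ and with $\vf\in H^1(M)$ is a harmless elaboration of steps the paper leaves implicit.
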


\begin{proof}
For all $x\in S$, we have $|\vf_\ve(x)|\le|\vf(x)|$.
Hence $\vf_\ve$ is in $L^2(M)$. Moreover, $\vf_\ve(x)\rightarrow\vf(x)$,
hence $\lim_{\ve\rightarrow0}\vf_\ve=\vf$ in $L^2(M)$.
Furthermore, $\vf_\ve$ has weak gradient
\begin{equation}\label{weakder}
  \nabla\vf_\ve(x) = \begin{cases}
  \nabla\vf(x) \:&\text{if $|\vf(x)|\ge\ve$}, \\
  \phantom{\nabla\vf} 0 &\text{otherwise}.
 \end{cases}
\end{equation}
It follows that $\vf_\ve$ is in $H^1(M)$.
By \cref{lemnae},
$\lim_{\ve\rightarrow0}\vf_\ve=\vf$ in $H^1(M)$.
\end{proof}

In what follows, we assume throughout that $\ve$ is $\vf$-regular.
We say that a disc $D$ in $S$ is an \emph{$\ve$-disc} if $D$ is closed in $S$ and
\begin{equation}\label{norder}
  \text{$\vf=+\ve$ and $\nu(\vf)>0\quad$ or $\quad\vf=-\ve$ and $\nu(\vf)<0$}
\end{equation}
along the boundary circle $\partial D$ of $D$,
where $\nu$ denotes the outer normal of $D$ along $\partial D$.
Note that, for an $\ve$-disc $D$,
a neighborhood of $\partial D$ inside $D$ is contained in $Z_\vf(\ve)$,
whereas a neighborhood of $\partial D$ outside $D$ belongs to $\{\vf\ge\ve\}$
in the first case in \eqref{norder} and $\{\vf\le-\ve\}$ in the second.

The boundary circles of $\ve$-discs are components of $\{\vf=\pm\ve\}$.
Since $\ve$ is $\vf$-regular,
the normal derivative of $\vf$ has to be nonzero along $\{\vf=\pm\ve\}$.
The requirements on the normal derivative in \eqref{norder} fix its sign.
As an example where these requirements do not hold,
we note that components of $\{\vf\ge\ve\}$ or $\{\vf\le-\ve\}$ might be discs,
but never $\ve$-discs.
On the other hand, any component of $Z_\vf(\ve)$, which is a disc, is also an $\ve$-disc.

By the Schoenflies theorem, any component $C$ of $Z_\vf(\ve)$,
which is contained in the interior of a closed disc, is also contained in an $\ve$-disc.
More precisely, there is an $\ve$-disc $D$ such that $\partial D\subseteq\partial C$
and such that $C$ is a neighborhood of $\partial D$ inside $D$.
We let $Y_\vf(\ve)$ be the union of $S\setminus Z_\vf(\ve)$ with all $\ve$-discs.
Note that the union might not be disjoined since $\ve$-discs might contain components
of $\{\vf\ge\ve\}$ and $\{\vf\le-\ve\}$.

\begin{lem}\label{yfi}
\begin{inparaenum}[1)]
\item\label{zdisc}
$Y_\vf(\ve)$ is the union of $S\setminus\mathring Z_\vf(\ve)$
with all components of $Z_\vf(\ve)$
which are contained in the interior of closed discs in $S$. \\
\item\label{cinco}
The components of $Y_\vf(\ve)$ are incompressible in $S$.
\end{inparaenum} 
\end{lem}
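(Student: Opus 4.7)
I will prove the two assertions separately, reusing the Schoenflies-type argument recalled just before the Lemma.

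\emph{Part~(1).} I verify the two set inclusions. For the inclusion $Y_\vf(\ve)\subseteq (S\setminus\mathring Z_\vf(\ve))\cup\{\text{components of }Z_\vf(\ve)\text{ in the interior of closed discs}\}$, I take $x\in Y_\vf(\ve)$; if $x\in S\setminus Z_\vf(\ve)$, there is nothing to prove, so I can assume that $x$ lies in some $\ve$-disc $D$. If $|\vf(x)|\ge\ve$, then again $x\in S\setminus\mathring Z_\vf(\ve)$; if $|\vf(x)|<\ve$, let $C$ be the component of $Z_\vf(\ve)$ containing $x$. The $\ve$-disc condition \eqref{norder} forces an exterior neighborhood of $\partial D$ to lie in $S\setminus Z_\vf(\ve)$, so $C$ cannot leave $D$, whence $C\subseteq D$; since $\partial D$ is a smooth circle I can slightly enlarge $D$ to a closed disc $D'\supsetneq D$ with $C\subseteq\mathring D'$. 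For the reverse inclusion, $S\setminus Z_\vf(\ve)\subseteq Y_\vf(\ve)$ holds by definition, every component of $Z_\vf(\ve)$ in the interior of a closed disc is absorbed into an $\ve$-disc by the Schoenflies argument recalled before the Lemma, and each point of $\partial Z_\vf(\ve)=\{|\vf|=\ve\}$ lies on the boundary (or in the interior) of an $\ve$-disc via the local structure at regular level sets provided by $\vf$-regularity.

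\emph{Part~(2).} Let $X$ be a component of $Y_\vf(\ve)$ and $\gamma\subseteq X$ a closed curve that is null-homotopic in $S$. By \cref{lemdisc}, $\gamma$ bounds an embedded closed disc $D_0\subseteq S$; it suffices to show $D_0\subseteq Y_\vf(\ve)$, for then connectedness of $D_0$ combined with $\gamma\subseteq X$ forces $D_0\subseteq X$, showing that $\gamma$ is null-homotopic in $X$. Suppose the contrary and pick $y\in D_0\setminus Y_\vf(\ve)$. By part~(1), $S\setminus Y_\vf(\ve)$ is the disjoint union of the open sets $\mathring C$ (interior in $S$) where $C$ runs over the components of $Z_\vf(\ve)$ that are not contained in the interior of any closed disc, so $y\in\mathring C$ for one such $C$. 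Now $\mathring C$ is open and connected, disjoint from $\gamma\subseteq Y_\vf(\ve)$, and meets $D_0$, hence is contained in $D_0$; by $\vf$-regularity every point of $\partial C$ is a limit of points in $\mathring C$, so $\overline{\mathring C}=C$ and thus $C\subseteq D_0$. After a small isotopy of $\gamma$ inside $X$ making its intersections with $\partial C$ transversal (and noting that $\gamma\cap C\subseteq\partial C$ by the choice of $C$), I push $D_0$ slightly outward at those finitely many points to obtain a closed disc $D_0'\supsetneq D_0$ with $C\subseteq\mathring D_0'$, contradicting the assumption on $C$.

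\emph{Main obstacle.} The most delicate step is the inclusion $\supseteq$ in part~(1), namely verifying that every component of $\partial Z_\vf(\ve)$ is contained in some $\ve$-disc. This requires combining the smooth local structure at regular level sets with the Schoenflies-type argument, and implicitly uses the global topological constraints of $S$ with $\chi(S)<0$ under which the whole proof of \cref{small} is carried out.
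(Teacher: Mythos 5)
Your overall route coincides with the paper's: part (1) is meant to follow from the Schoenflies remark preceding the lemma, and part (2) from bounding a disc via \cref{lemdisc} and locating a component of $S\setminus Y_\vf(\ve)$ inside it. The problem is the step you yourself single out as the main obstacle: it is simply false that every point of $\{|\vf|=\ve\}$ lies on the boundary or in the interior of an $\ve$-disc. The boundary of an $\ve$-disc is a null-homotopic circle component of $\{|\vf|=\ve\}$ and an $\ve$-disc is compact; hence a point on a properly embedded line component of $\{|\vf|=\ve\}$ (these occur along funnels in the non-compact case), or on a circle component that is essential in $S$, lies on no $\ve$-disc boundary and in no $\ve$-disc at all, and the same can happen for a null-homotopic circle component whose disc lies on the wrong side of the sign condition \eqref{norder} (recall that a component of $\{\vf\ge\ve\}$ which is a disc is never an $\ve$-disc). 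Neither ``local structure at regular level sets'' nor $\chi(S)<0$ can repair this; the hypothesis $\chi(S)<0$ plays no role in this lemma. What is true, and is all that the paper uses later (and all that your part (2) needs), is the following: a component of $Z_\vf(\ve)$ that is contained in the interior of a closed disc is contained in an $\ve$-disc (the Schoenflies remark), while a component of $Z_\vf(\ve)$ that is not contained in any such interior is disjoint from every $\ve$-disc (if it met one, it could not cross its boundary circle because of \eqref{norder}, hence would lie inside it); consequently $S\setminus Y_\vf(\ve)$ is exactly the union of the latter components, up to their boundary level curves. The identity in (1) must be read with this proviso about $\{|\vf|=\ve\}$ (the paper tacitly treats $Y_\vf(\ve)$ as containing the closed sets $\{\vf\ge\ve\}$ and $\{\vf\le-\ve\}$; compare \cref{sinc}); your attempt to establish the literal inclusion of all of $\{|\vf|=\ve\}$ is a genuine gap, since the needed claim is false.

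In part (2) your argument is in substance the paper's, but \cref{lemdisc} applies to Jordan (embedded) loops, whereas you invoke it for an arbitrary null-homotopic closed curve $\gamma$ in the component; you must first replace $\gamma$ by an embedded circle in the interior of the component that is still not contractible there but contractible in $S$ (this reduction is the paper's first step), and only then take the disc. With that fix, and using the corrected description of $S\setminus Y_\vf(\ve)$ above, your argument that the component of $Z_\vf(\ve)$ meeting the disc is contained in it, and the slight enlargement of the disc to put that component in its interior, do yield the contradiction.
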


\begin{proof}
\ref{zdisc}) follows immediately from what we said above.
As for \ref{cinco}), suppose that there is a loop in a component $C$ of $Y_\vf(\ve)$
which is not contractible in $C$, but is contractible in $S$.
Then there is an embedded circle $c$ in the interior of $C$ with that property.
By \cref{lemdisc}, there is a closed disc $D$ in $S$ with $\partial D=c$.
Since $c$ is not contractible in $C$,
the interior of $D$ contains a component of $S\setminus Y_\vf(\ve)\subseteq Z_\vf(\ve)$.
This contradicts \ref{zdisc}).
\end{proof}

The set of $\ve$-discs is ordered by inclusion.
It is important that we have maximal elements in this ordered set.

\begin{lem}\label{maxed}
If two $\ve$-discs intersect, then they are either identical
or one is contained in the interior of the other.
Moreover, any $\ve$-disc is contained in a unique maximal $\ve$-disc
and maximal $\ve$-discs are either identical or disjoint.
\end{lem}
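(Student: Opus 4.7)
The approach is in two parts: first, a topological analysis of how two intersecting $\ve$-discs can sit relative to one another; then a compactness-type argument for the maximality claim. The initial key observation is that, since $\ve$ is $\vf$-regular, the set $\{\vf = \pm\ve\}$ is a smooth one-dimensional submanifold of $\mathring S$. Each boundary circle $\partial D$ of an $\ve$-disc $D$ is a compact connected embedded curve in this submanifold, hence equals one of its connected components. Two distinct components of a $1$-manifold are disjoint, so for any two $\ve$-discs $D_1, D_2$ the boundaries $\partial D_1$ and $\partial D_2$ are either identical or entirely disjoint.

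Suppose first $\partial D_1 = \partial D_2$. If $D_1 \ne D_2$, then $\mathring D_1 \cap \mathring D_2$ is both open and closed in $\mathring D_1$ (its relative boundary in $\mathring D_1$ would lie in $\partial D_2 \cap \mathring D_1 = \emptyset$), and so must be empty; this places $D_1$ and $D_2$ on opposite sides of their common boundary. Hence the outer normals satisfy $\nu_1 = -\nu_2$ along the boundary, forcing $\nu_1(\vf)$ and $\nu_2(\vf)$ to have opposite signs, contradicting that both discs obey \eqref{norder} with the sign of $\nu(\vf)$ determined by the common boundary value of $\vf$. Hence $D_1 = D_2$.

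Suppose instead $\partial D_1 \cap \partial D_2 = \emptyset$. Since $\partial D_1$ is connected and disjoint from $\partial D_2$, it lies entirely in $\mathring D_2$ or in $S \setminus D_2$, and similarly for $\partial D_2$. If both boundaries lie outside the opposite disc, the connectedness of $D_1$ together with $D_1 \cap \partial D_2 = \emptyset$ places $D_1$ in $S \setminus D_2$, contradicting $D_1 \cap D_2 \ne \emptyset$. If $\partial D_1 \subset \mathring D_2$, then the Schoenflies theorem applied inside $D_2$ produces a closed sub-disc $D' \subset D_2$ with $\partial D' = \partial D_1$; if $D_1 \ne D'$, the interior-disjointness argument of the previous paragraph shows $D_1 \cup D'$ is a closed $2$-submanifold of $S$, necessarily equal to $S$ by connectedness, and hence a sphere---contradicting $\chi(S) < 0$. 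So $D_1 = D' \subset \mathring D_2$. The symmetric subcase yields $D_2 \subset \mathring D_1$. The remaining subcase, with both boundaries inside the other disc, is impossible since it would give $\partial D_2 \subset \mathring D_1 \subset \mathring D_2$, contradicting $\partial D_2 \cap \mathring D_2 = \emptyset$.

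For the maximality assertion, the first part shows that the family $\mathcal{E}$ of $\ve$-discs containing a given $\ve$-disc $D$ is totally ordered by inclusion. My plan is to show the union $U = \bigcup_{D' \in \mathcal{E}} D'$ has closure an $\ve$-disc, which must then be the unique maximal element of $\mathcal{E}$. The key input is that whenever $D' \subsetneq D''$ in $\mathcal{E}$, the open annulus $D'' \setminus \mathring{D'}$ contains a connected component of $\{|\vf| > \ve\}$ adjacent to $\partial D'$---by \eqref{norder}, $|\vf|$ exceeds $\ve$ just outside $\partial D'$ inside $D''$---and these components are pairwise disjoint along a strictly increasing chain in $\mathcal{E}$. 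Since $\vf \in L^2(S)$, the set $\{|\vf| > \ve\}$ has area at most $\|\vf\|_2^2/\ve^2$; combined with the smoothness of $\{\vf = \pm\ve\}$ and the finite-type topology of $S$, this should yield the compactness needed to identify $\bar U$ as a closed topological disc bounded by a single component of $\{\vf = \pm\ve\}$ satisfying \eqref{norder}, hence an $\ve$-disc. The final assertion---two maximal $\ve$-discs are either identical or disjoint---then follows at once from the first part, since two intersecting maximal discs are comparable and maximality forces equality. The main obstacle is precisely this compactness step: extracting from the finite-area bound and the finite-type topology a genuine limit statement that $\bar U$ inherits the structure of an $\ve$-disc.
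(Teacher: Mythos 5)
Your first statement is handled correctly, and essentially as the paper intends (the paper dismisses it as ``clear''): boundary circles of $\ve$-discs are components of the $1$-manifold $\{\vf=\pm\ve\}$, hence equal or disjoint, and your case analysis (opposite-side contradiction with \eqref{norder} when the boundaries coincide, Schoenflies plus the ``two discs glued along a circle give a sphere'' argument when one boundary lies in the other's interior) is sound, modulo the small point that your clopen argument first gives ``empty or all of $\mathring D_1$'' and the second alternative must be run once more symmetrically to force $D_1=D_2$. Likewise, uniqueness and disjointness of maximal discs do follow formally from the first part once maximal discs are known to exist.

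The genuine gap is exactly the one you flag: the existence of a maximal $\ve$-disc above a given one. Your plan — take the union $U$ of the chain $\mathcal E$ and show $\bar U$ is an $\ve$-disc — cannot be completed with the tools you invoke. The finite-area bound $\operatorname{area}\{|\vf|>\ve\}\le\|\vf\|_2^2/\ve^2$ gives nothing, because the pairwise disjoint components of $\{|\vf|>\ve\}$ you produce in the successive annuli may have areas tending to zero, so an infinite strictly increasing chain is not excluded this way; and nothing in your sketch prevents such a chain from swallowing larger and larger pieces of a funnel, in which case $\bar U$ is not even compact, let alone a closed disc bounded by a single circle of $\{\vf=\pm\ve\}$ along which the sign condition \eqref{norder} holds. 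The paper sidesteps the limit object entirely and proves instead that every strictly ascending chain of pairwise distinct $\ve$-discs is \emph{finite}: fix an exhaustion by compact subsurfaces $S_n$ whose complements are cylindrical neighborhoods of the ends and whose boundaries meet $\{\vf=\pm\ve\}$ transversally, so that $\{\vf=\pm\ve\}\cap\partial S_n$ is a finite set and only finitely many components of $\{\vf=\pm\ve\}$ meet $S_n$. If $D_1\subseteq S_n$ and $\partial D_l\cap\partial S_n=\emptyset$, then $D_l\subseteq S_n$, since otherwise a boundary circle of $S_n$ would lie in $\mathring D_l$ and hence be null-homotopic, which is impossible for a cross section of a funnel when $\chi(S)<0$. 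Since the circles $\partial D_l$ of a strictly increasing chain are pairwise disjoint components of $\{\vf=\pm\ve\}$, only finitely many of them can meet the finite set $\{\vf=\pm\ve\}\cap\partial S_n$, and only finitely many can lie in $S_n$; hence the chain terminates, maximal discs exist with no limiting argument, and the rest follows as you say. If you want to keep your framework, replace the compactness step by this finiteness-of-chains argument rather than trying to equip $\bar U$ with the structure of an $\ve$-disc.
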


\begin{proof}
The first statement is clear since $\ve$ is $\vf$-regular.

Fix an exhaustion of $S$ by compact subsurfaces $S_n$
such that $S\setminus S_n$ consists of cylindrical neighborhoods of the ends of $S$
and such that $\partial S_n$ intersects the set $\{\vf=\pm\ve\}$ transversally.
Then the boundary components of any $S_n$ are labeled by the ends of $S$ they belong to,
any $S_n$ meets only finitely many components of the set $\{\vf=\pm\ve\}$,
and the sets $\{\vf=\pm\ve\}\cap\partial S_n$ are finite.

Let $D_1\subseteq D_2\subseteq\dots$ be an ascending chain of pairwise distinct $\ve$-discs.
For $n$ sufficiently large, we have $D_1\subseteq S_n$. 
Then $D_l\cap S_n\ne\emptyset$
and $\partial D_l\cap\partial S_n\subseteq\{\vf=\pm\ve\}\cap\partial S_n$ for all $l\ge1$.
Moreover, if $\partial D_l\cap\partial S_n=\emptyset$, then $D_l\subseteq S_n$.
Since $\ve$ is $\vf$-regular, it follows that the chain of discs is finite.

Let $D$ and $D'$ be maximal $\ve$-discs and suppose that $D\cap D'\ne\emptyset$.
Note that $D$ and $D'$ each have only one boundary circle, $c$ and $c'$.
If $c=c'$, then $D=D'$ by maximality.
If $c$ is contained in the interior of $D'$, then $c'$ is contained in the interior of $D$,
since otherwise $D$ would be contained in the interior of $D'$, contradicting maximality.
But then $D\cup D'$ is a subsurface of $S$ without boundary which is closed as a subset,
and hence $D\cup D'=S$.
This is impossible since then $S=D\cup D'$ would be a sphere. 
\end{proof}

\begin{lem}\label{sinc}
Each component $C$ of $Y_\vf(\ve)$ is the union of a component $C_0$
of $\{\vf\ge+\ve\}$ or of $\{\vf\le-\ve\}$
together with maximal $\ve$-discs attached to them along common boundary circles.
In particular, $\partial C\subseteq\partial C_0$ and
\begin{align*}
  &\text{$\vf|_{\partial C}=+\ve$ and $\nu(\vf)<0$ if $C_0$ is contained in $\{\vf\ge+\ve\}$,} \\
  &\text{$\vf|_{\partial C}=-\ve$ and $\nu(\vf)>0$ if $C_0$ is contained in $\{\vf\le-\ve\}$,}
\end{align*}
where $\nu$ denotes the outer normal field of $C$.
\end{lem}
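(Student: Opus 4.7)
The plan is to pick a single component $C_0$ of $\{\vf\ge+\ve\}\cup\{\vf\le-\ve\}$ contained in $C$ which lies in no $\ve$-disc, and then to show by a clopen argument that $C$ equals the union $U$ of $C_0$ with those maximal $\ve$-discs $D_i$ satisfying $\partial D_i\subseteq\partial C_0$.

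For the existence of such a $C_0$, I would first note that if $C$ contains no $\ve$-disc, then $C\cap\mathring Z_\vf(\ve)=\emptyset$ by Lemma~\ref{yfi}.\ref{zdisc}, so $C$ is itself a component of $\{\vf\ge+\ve\}\cup\{\vf\le-\ve\}$ and there is nothing to attach. Otherwise I would pick any maximal $\ve$-disc $D\subseteq C$, say with $\vf|_{\partial D}=+\ve$: then \eqref{norder} places the exterior of $D$ near $\partial D$ in $\{\vf>+\ve\}$, hence in a unique component $C_0$ of $\{\vf\ge+\ve\}$ with $\partial D\subseteq\partial C_0\subseteq C$. If $C_0\subseteq D'$ for some maximal $\ve$-disc $D'$, then $\partial D\subseteq D'$, which contradicts Lemma~\ref{maxed} (forcing $D=D'$ or $D\cap D'=\emptyset$, both impossible).

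The main work is then to verify $U=C$. Closedness of $U$ in $C$ is immediate from the local finiteness of $\{\vf=\pm\ve\}$ given by $\vf$-regularity of $\ve$. For openness at a point $x\in U$, the only delicate case is $x\in\partial C_0$ lying on a boundary circle $c_0$ (say $\vf|_{c_0}=+\ve$); a small neighborhood of $x$ splits across $c_0$ into a $C_0$-side (in $\{\vf>+\ve\}$) and a side lying in some local component $K$ of $Z_\vf(\ve)$. If $K$ is essential, then $K\cap Y_\vf(\ve)=\emptyset$ and the neighborhood of $x$ in $C$ reduces to the $C_0$-side, in $U$. If $K$ is inessential and contained in a maximal $\ve$-disc $D_{\max}$ with $\partial D_{\max}\subseteq\partial K$, then the key claim is $c_0=\partial D_{\max}$, for this puts $D_{\max}$ among the attached $D_i$ and makes $K\subseteq D_{\max}\subseteq U$. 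To establish this claim I would argue that, since $c_0$ and $\partial D_{\max}$ are either equal or disjoint components of the $1$-manifold $\{\vf=\pm\ve\}$ and $c_0\subseteq D_{\max}$, the alternative is $c_0\subseteq\mathring D_{\max}$. In that case $\partial D_{\max}$ separates $S$ into $\mathring D_{\max}$ and $S\setminus D_{\max}$, so the connected open set $\mathring C_0\subseteq\{\vf>+\ve\}$, being disjoint from $\partial D_{\max}$, lies entirely in one of these two pieces; the possibility $\mathring C_0\subseteq\mathring D_{\max}$ gives $C_0\subseteq D_{\max}$, contradicting the choice of $C_0$, while $\mathring C_0\subseteq S\setminus D_{\max}$ contradicts that $\mathring C_0$ accumulates at $c_0\subseteq\mathring D_{\max}$. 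Connectedness of $C$ then yields $U=C$.

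Once $U=C$, uniqueness of $C_0$ is immediate, as every other component of $\{\vf\ge+\ve\}\cup\{\vf\le-\ve\}$ in $C$ sits inside some attached $D_i$ and is therefore contained in an $\ve$-disc. To conclude with $\partial C\subseteq\partial C_0$ and the asserted sign of $\nu(\vf)$, I would observe that any boundary circle $c$ of $C$ lies in $\{\vf=\pm\ve\}$, that its $\{|\vf|>\ve\}$-side is automatically in $Y_\vf(\ve)$ and hence in $C$, and that its exterior-of-$C$ side must therefore lie in $\{|\vf|<\ve\}$ in an essential component of $Z_\vf(\ve)$; this forces the interior side of $c$ to lie in $C_0$ (not in any attached $D_i$, for then $c=\partial D_i$ would put the exterior in $\{|\vf|>\ve\}\subseteq C$), and the sign of $\nu(\vf)$ is dictated by $\vf$ decreasing from $C_0$ across $c$ into its exterior. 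The main obstacle throughout is the openness step with inessential $K$, specifically the identification $c_0=\partial D_{\max}$, which relies on the separation of $S$ by the disc-boundary $\partial D_{\max}$ combined with connectedness of $\mathring C_0$.
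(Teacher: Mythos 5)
Your proposal is correct and, at its core, is the same argument as the paper's: the paper's two-sentence proof simply invokes \cref{maxed} (every $\ve$-disc lies in a maximal one, and maximal ones are disjoint) together with the normal-derivative condition \eqref{norder} to attach each maximal $\ve$-disc to the adjacent component of $\{\vf\ge\ve\}$ or $\{\vf\le-\ve\}$. What you add is an explicit verification of the connectedness bookkeeping the paper leaves implicit (the clopen argument and the separation argument identifying a boundary circle of $C_0$ adjacent to an inessential component of $Z_\vf(\ve)$ with the boundary of the attached maximal $\ve$-disc), which is a legitimate and correct elaboration rather than a different route.
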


\begin{proof}
By \cref{maxed},
$Y_\vf(\ve)$ is the union of $S\setminus Z_\vf(\ve)$ with all maximal $\ve$-discs.
By the requirement on the normal derivative in \eqref{norder},
the boundary circle $c$ of each maximal $\ve$-disc $D$
is attached to a component of $\{\vf\ge\ve\}$ if $\vf|_c=\ve$
and a component of $\{\vf\le-\ve\}$ if $\vf|_c=-\ve$, respectively.
\end{proof}

By \cref{sinc}, we may write $Y_\vf(\ve)$ as the disjoint union,
\begin{equation}\label{ypm}
  Y_\vf(\ve) = Y_\vf^+(\ve) \dot\cup Y_\vf^-(\ve),
\end{equation}
where $Y_\vf^+(\ve)$ and $Y_\vf^-(\ve)$ consist of the components $C$ of $Y_\vf(\ve)$
such that the corresponding $C_0$  is contained in $\{\vf\ge+\ve\}$ and $\{\vf\le-\ve\}$,
respectively.

For the statement of the following lemma, recall \cref{amabel}.

\begin{lem}\label{eul}
For any sufficiently small $\vf$-regular $\ve>0$,
the fundamental group of at least one component of $Y_\vf(\ve)$ contains
the free group $F_2$ in two generators.
Moreover, if $\vf$ is an eigenfunction,
then each nodal domain $C$ of $\vf$ is incompressible
and the fundamental group of $C$ contains $F_2$. 
\end{lem}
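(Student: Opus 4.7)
My plan is a Rayleigh-quotient argument combined with strict monotonicity of the Dirichlet ground state; I will prove the moreover statement directly and deduce the first assertion from it.

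\emph{Reduction to the eigenfunction case.} Assume, contrary to the first assertion, that no component of $Y_\vf(\ve)$ has fundamental group containing $F_2$. Each component $C$ has non-empty boundary on $\{|\vf|=\ve\}$ and is incompressible in $S$ by Lemma \ref{yfi}.\ref{cinco}; by Proposition \ref{amabel}, each such $C$ is a disc, annulus, or cross cap, and hence $\lambda_0(C)\ge\Lambda(S)$ by the very definition of $\Lambda(S)$. The truncation $\vf_\ve$ from Lemma \ref{lemfe} lies in $H^1(S)$ and vanishes on $Z_\vf(\ve)\supseteq\partial Y_\vf(\ve)$, so Lemma \ref{dirinc} places $\vf_\ve|_C\in H^1_0(C)$. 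Summing the Rayleigh estimates,
\begin{equation*}
  \int_S|\nabla\vf_\ve|^2
  \ge\sum_C\lambda_0(C)\int_C\vf_\ve^2
  \ge\Lambda(S)\int_S\vf_\ve^2,
\end{equation*}
so $R(\vf_\ve)\ge\Lambda(S)$. Sending $\ve\to0$ via Lemma \ref{lemfe} gives $R(\vf)\ge\Lambda(S)$. Since $\vf$ is a finite combination of eigenfunctions with eigenvalues $\le\Lambda(S)$, one also has $R(\vf)\le\Lambda(S)$, so $R(\vf)=\Lambda(S)$ and Lemma \ref{rayl2} forces $\vf$ to be an eigenfunction of $S$ with eigenvalue $\Lambda(S)$.

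\emph{Moreover statement.} Now let $\vf$ be any eigenfunction with eigenvalue $\lambda\le\Lambda(S)$, and let $C$ be a nodal domain. Then $\vf|_C$ is the Dirichlet ground state of $C$, so $\lambda_0(C)=\lambda$. If $\pi_1(C)\not\supseteq F_2$, then $C$ is a disc, annulus, or cross cap, and $\lambda_0(C)\ge\Lambda(S)$ forces $\lambda=\Lambda(S)$. I then enlarge $C$ to $C'\supsetneq C$ inside $S$ of the same topological type via a small tubular collar of $\partial C\cap\mathring S$, which is piecewise smooth by Cheng's Theorem \ref{cheng}. Strict monotonicity of the Dirichlet ground state --- the zero-extension of the ground state of $C$ lies in $H^1_0(C')$ but vanishes on a set of positive measure, so cannot be the strictly positive ground state of $C'$ --- gives $\lambda_0(C')<\lambda_0(C)=\Lambda(S)$, contradicting $\lambda_0(C')\ge\Lambda(S)$. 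For incompressibility, suppose a loop $c\subset C$ is null-homotopic in $S$ but not in $C$; Lemma \ref{lemdisc} produces a disc $D\subset S$ with $\partial D=c$ and $D\not\subseteq C$, so $\vf$ changes sign in $D$. Using that $Z_\vf\cap D$ is a finite graph (Cheng's Theorem \ref{cheng}), a Jordan--Schoenflies argument extracts an innermost face $F\subset\mathring D$, a topological disc which is a complete nodal domain of $\vf$ in $S$; the enlargement argument applied to $F$ yields the same contradiction.

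\emph{Closing the first assertion and main obstacle.} Combining the two: under the contradiction hypothesis of the first assertion the reduction makes $\vf$ an eigenfunction, and the moreover statement then says every nodal domain of $\vf$ has $\pi_1\supseteq F_2$; for sufficiently small $\ve$ each component of $Y_\vf(\ve)$ is homotopy-equivalent to a nodal domain (a small $\ve$-neighborhood of the locally finite graph $Z_\vf$ does not alter topology), so also has $\pi_1\supseteq F_2$, contradicting the hypothesis. I expect the main difficulty to be verifying that the innermost face $F$ inside $D$ is a \emph{complete} nodal domain of $\vf$ in $S$ --- i.e.\ that $\bar F\subset\mathring D$ --- so that $F$ is a valid disc domain competing in the infimum for $\Lambda(S)$. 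The companion task of arranging the enlargement $C'\supsetneq C$ while preserving the disc/annulus/cross cap type and invoking strict monotonicity relies in the same way on the piecewise-smooth structure of $Z_\vf$ supplied by Cheng's theorem.
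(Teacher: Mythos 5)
Your proof follows the paper's strategy quite closely: the truncation $\vf_\ve$, \cref{dirinc}, and the definition of $\Lambda(S)$ reduce the first assertion to the case where $\vf$ is a $\Lambda(S)$-eigenfunction, and the ``moreover'' part is the paper's thickening argument, with positivity of the ground state playing the role that unique continuation plays in the paper --- equivalent ways of excluding a bottom eigenfunction of $C'$ that vanishes on the open set $C'\setminus \bar C$. The innermost-face point you flag is real but is handled with the same brevity in the paper, and it does work: $Z_\vf\cap D$ is a finite graph whose vertices are interior points of even valence at least two, so it contains Jordan curves, and an innermost one bounds a closed disc whose interior is a complete nodal domain. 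Two corrections of detail, though. First, \cref{rayl2} does not turn $R(\vf)=\Lambda(S)$ into ``$\vf$ is a $\Lambda(S)$-eigenfunction'': that lemma concerns $\lambda_0(S)$, and in general $\lambda_0(S)<\Lambda(S)$ (for finite area $\lambda_0(S)=0$). The correct argument, which you already use implicitly for $R(\vf)\le\Lambda(S)$, is orthogonality: writing $\vf=\sum c_i\vf_i$ with eigenvalues $\lambda_i\le\Lambda(S)$, equality forces $\lambda_i=\Lambda(S)$ whenever $c_i\ne0$. Second, ``$\vf|_C$ is the Dirichlet ground state of $C$, so $\lambda_0(C)=\lambda$'' is more than is available a priori; what is free is $\lambda_0(C)\le\lambda$ (\cref{cheng} plus \cref{dirinc}). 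This is harmless, because under $\pi_1(C)\not\supseteq F_2$ the chain $\Lambda(S)\le\lambda_0(C)\le R(\vf|_C)=\lambda\le\Lambda(S)$ collapses, and \cref{rayl2}, now correctly applied to $C$ and $C'$, makes $\vf|_C$ and its zero extension bottom eigenfunctions --- which is also what legitimizes your ``strict monotonicity'' step, since that step needs the bottom of the spectrum of $C$ to be attained.

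The genuine gap is the closing step. The claim that for small $\ve$ \emph{each} component of $Y_\vf(\ve)$ is homotopy equivalent to a nodal domain, justified by ``a small $\ve$-neighborhood of $Z_\vf$ does not alter topology'', is unjustified and false in general: for fixed $\ve>0$ the set $Z_\vf(\ve)=\{|\vf|\le\ve\}$ is not a small neighborhood of the nodal set --- far out in the funnels it can contain large regions where $\vf$ is small but nonzero --- and $\{\vf>\ve\}$ can have components (for instance disc components around local maxima of height slightly above $\ve$) lying inside a single nodal domain without carrying its topology; there is no uniformity in $\ve$, which is precisely why the paper develops the $\ve$-disc and $F_2$-component machinery. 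Fortunately you need far less than your claim: a single $F_2$-component. The paper's bridge is a compactness argument: choose loops $c_0,c_1$ in $\mathring C$, based at a common point, generating an $F_2$ in $\pi_1(C)$; after fixing the sign, $\mathring C=\cup_{\ve>0}\{\vf>\ve\}$, so for all small $\ve$ both loops lie in one component of $\{\vf\ge\ve\}$, hence in one component of $Y_\vf(\ve)$; the subgroup of its fundamental group generated by the two loops surjects onto that $F_2$, and a surjection onto a free group splits, so this component contains $F_2$. Replacing your homotopy-equivalence claim by this argument closes the proof.
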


\begin{proof}
We may assume that $Y_\vf(\ve)\ne S$.
We suppose first that the Rayleigh quotient $R(\vf)<\Lambda(S)$
and choose $\delta > 0$ such that
\begin{equation}\label{strict}
  R(\vf) \le \Lambda(S) - 2\delta.
\end{equation}
By \cref{lemfe} and since $S\setminus Y_\vf(\ve)\subseteq Z_\vf(\ve)$, we have,
for any sufficiently small $\vf$-regular $\ve>0$,
\begin{equation}\label{strict2}
   \frac{\sum_C\int_C|\nabla\vf_\ve|^2}{\sum_C\int_C\vf_\ve^2}
   \le \frac{\int_S|\nabla\vf|^2 dv}{\int_S\vf^2 dv} + \delta
   = R(\vf) + \delta
   \leq \Lambda(S) - \delta,
\end{equation}
where the sums run over the components $C$ of $Y_\vf(\ve)$.
We conclude that there is a component $C$ of $Y_\vf(\ve)$ such that
\begin{equation}\label{strict4}
   R(\vf_\ve|_C)
   = \frac{\int_C|\nabla\vf_\ve|^2}{\int_C\vf_\ve^2}
   \leq \Lambda(S) - \delta.
\end{equation}
Now $\vf$ is smooth on $S$,
hence $\vf_\ve|_C$ is smooth on $C$ and vanishes along $\partial C$.
Therefore $\vf_\ve|_C\in H^1_0(C)$, by \cref{dirinc}.
Now it follows from the definition of $\Lambda(S)$ that the interior of $C$
can not be diffeomorphic to an open disc, an open annulus, or an open cross cap.
Thus the fundamental group of $C$ contains $F_2$.

Assume now that $R(\vf)=\Lambda(S)$.
Recall that $\vf$ is a finite linear combination of eigenfunctions of $S$,
$\vf = \sum  c_i\vf_i$,
where $\vf_i\in E$ is a $\lambda_i$-eigenfunction with $\lambda_i\le\Lambda(S)$.
If there would be an $i$ with $c_i\ne0$ and $\lambda_i<\Lambda(S)$,
then we would have $R(\vf)<\Lambda(S)$, a contradiction.
It follows that all $\lambda_i$ with $c_i\ne0$ are equal to $\Lambda(S)$,
and hence that $\vf$ is a $\Lambda(S)$-eigenfunction.

Suppose now, more generally, that $\vf$ is an eigenfunction
with corresponding eigenvalue $\lambda\le\Lambda(S)$.
Then $\vf$ is smooth on $S$.
By \cref{cheng}, the nodal domains of $\vf$ have piecewise smooth boundary.
Hence \cref{dirinc} implies that, for any nodal domain $C$ of $\vf$,
we have $\vf|_C\in H^1_0(C)$ with $R(\vf|_C)=\lambda$.
In particular, $\lambda_0(C)\le\lambda$.

Let $C'$ be a thickening of $C$,
that is, $C'$ is a domain in $S$ with piecewise smooth boundary
which contains $C$ in its interior and such that $C$ is a deformation retract of $C'$.
If the fundamental group of $C$ does not contain $F_2$,
then neither does the fundamental group of $C'$, and then
\begin{equation*}
  \Lambda(S) \le \lambda_0(C') \le \lambda_0(C).
\end{equation*}
The extension $\vf'$ of $\vf|_{\partial C}$ to $C'$, setting $\vf'|_{C'\setminus C}=0$,
is in $H^1_0(M)$ and in the domain of the Laplacian of $C'$.
Moreover, it has Rayleigh quotient $R(\vf')=\lambda$.
Hence \cref{rayl2} applies and shows that $\lambda_0(C')=\Lambda(S)$
and $\Delta\vf'=\Lambda(S)\vf'$.
Now $\vf'$ does not vanish identically on $C$, but vanishes on $C'\setminus C$.
This is in contradiction to the unique continuation property for Laplace operators.
Hence the fundamental group of any nodal domain of $\vf$ contains $F_2$.

Let $C$ be a nodal domain of $\vf$ and suppose that $C$ is not incompressible in $S$.
Then there is a loop $c$ in $C$ which is not homotopic to zero in $C$,
but is homotopic to zero in $S$.
Without loss of generality, we may assume that $c$ is a Jordan curve in $\mathring C$.
Then $c$ bounds a disc in $S$, which is not contained in $C$, by \cref{lemdisc}.
By the Schoenflies theorem, there would be a nodal domain $D$ of $\vf$
whose closure is a closed disc with piecewise smooth boundary
and with $\lambda_0(D)=\Lambda(S)$.
This is impossible, since $\Lambda(S)$ is not attained on (embedded) closed discs.
Hence all nodal domains of $\vf$ are incompressible in $S$.

Let $C$ be a nodal domain and $c_1,c_2\colon[0,1]\to C$
be two loops at a point $x\in C$ which generate a free subgroup $F_2\in\pi_1(C,x)$.
By \cref{cheng},
we may assume that the images of $c_1$ and $c_2$ are contained in $\mathring C$.
Without loss of generality, we may also assume that $\vf$ is positive on $\mathring C$.
Then
\begin{equation*}
  \mathring C=\cup_{\ve>0}\{y\in C\mid\vf(y)>\ve\}.
\end{equation*}
Therefore the image of $c_0$ and $c_1$ is contained in $\{y\in C\mid\vf(y)>\ve\}$
for all sufficiently small $\ve>0$.
Hence the fundamental group of the component $C_0\subseteq C$ of $Y_\vf(\ve)$
which contains $c_0$ and $c_1$ also contains $F_2$.
From \cref{sinc} we conclude that the component of $Y_\vf(\ve)$ containing $C_0$
contains $F_2$.
\end{proof}

Say that a component $C$ of $Y_\vf(\ve)$ is an \emph{$F_2$-component}
if the fundamental group of $C$ contains $F_2$.
By \cref{sinc}, a component $C$ of $Y_\vf^+(\ve)$ is an $F_2$-component
if and only if there is a point $x$ in the interior of the corresponding component $C_0$
of $\{\vf\ge\ve\}$ and a pair of loops $c_0$ and $c_1$ at $x$
which generate an $F_2$ in $\pi_1(C,x)$
such that $c_0$ and $c_1$ are contained in the interior of $C_0$;
that is, such that $\vf>\ve$ along them.
The characterization of $F_2$-components of $Y_\vf^-(\ve)$ is analogous.

\begin{lem}\label{finite}
If $K$ is a compact subsurface of $S$ such that $S\setminus K$ is a cylindrical
neighborhood of the ends of $S$ and $C$ is a component of $Y_\vf(\ve)$
which is contained in $S\setminus K$,
then the interior of $C$ is diffeomorphic to an open disc or an open annulus.
In particular,
\begin{compactenum}[1)]
\item
any $F_2$-component of $Y_\vf(\ve)$ intersects $K$;
\item
$Y_\vf(\ve)$ contains only finitely many $F_2$-components.
\end{compactenum}
\end{lem}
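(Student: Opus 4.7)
The plan is to first prove the structural claim about components of $Y_\vf(\ve)$ contained in $S\setminus K$, and then read off parts (1) and (2). Each connected component of $S\setminus K$ is an open annulus $A$ (a funnel), and since $\chi(S)<0$ the generator of $\pi_1(A)$ is non-trivial in $\pi_1(S)$, so $A$ is incompressible in $S$. A component $C\subseteq S\setminus K$ of $Y_\vf(\ve)$ is, by connectedness, contained in a single such $A$; by \cref{yfi}.\ref{cinco}, $C$ is incompressible in $S$, hence $\pi_1(C)\hookrightarrow\pi_1(A)=\Z$ is injective and $\pi_1(C)$ is cyclic. By \cref{amabel}, $\mathring C$ is then an open disc, annulus, or cross cap; orientability of $A$ excludes the cross cap, giving the claim.

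Part (1) is immediate, since an $F_2$-component cannot have cyclic fundamental group and therefore cannot be contained in $S\setminus K$. For Part (2), by Part (1) it suffices to show that only finitely many components of $Y_\vf(\ve)$ meet $K$ at all. Because $\ve$ is $\vf$-regular, $\{\vf=\pm\ve\}$ is a smooth properly embedded $1$-manifold in $S$; a standard compactness argument—a sequence $y_i$ chosen from distinct components of $\{\vf=\pm\ve\}\cap K$ would have a convergent subsequence whose limit $y$ has a neighborhood where $\{\vf=\pm\ve\}$ is a single smooth arc—shows that only finitely many components of $\{\vf=\pm\ve\}$ meet $K$. The same argument, using that near any $y$ with $|\vf(y)|\ge\ve$ the set $\{|\vf|\ge\ve\}$ is locally connected, shows that only finitely many components $C_0$ of $\{\vf\ge+\ve\}\cup\{\vf\le-\ve\}$ meet $K$.

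To conclude I would invoke \cref{sinc}: every component $C$ of $Y_\vf(\ve)$ is a unique $C_0$ together with maximal $\ve$-discs attached along common boundary circles. The delicate case is $C\cap K\ne\emptyset$ while $C_0\cap K=\emptyset$: here some attached maximal $\ve$-disc $D\subseteq C$ meets $K$, while $\partial D\subseteq\partial C_0$ is disjoint from $K$, so $K\cap\mathring D$ is both open and closed in $K$ and consequently $\mathring D$ contains at least one connected component of $K$. By \cref{maxed}, distinct maximal $\ve$-discs are disjoint, and different components of $Y_\vf(\ve)$ own disjoint maximal $\ve$-discs; since $K$ has only finitely many connected components, only finitely many exceptional $C$ of this type occur.

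The main obstacle is exactly this last case: a component of $Y_\vf(\ve)$ may intersect $K$ solely through an attached maximal $\ve$-disc whose underlying $C_0$ sits far outside $K$, so the naive bound via $C_0$'s meeting $K$ misses these. Overcoming this requires both parts of \cref{maxed}—existence of a unique maximal $\ve$-disc above each $\ve$-disc, and pairwise disjointness of maximal $\ve$-discs—together with the elementary observation that a compact subsurface has only finitely many connected components.
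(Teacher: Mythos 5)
Your argument for the main claim --- $C$ is incompressible in $S$ by \cref{yfi}, hence incompressible in the annular component of $S\setminus K$ containing it, so $\pi_1(C)$ is trivial or infinite cyclic, and orientability rules out the cross cap --- is exactly the paper's proof. The paper treats parts (1) and (2) as immediate; your added finiteness argument for (2), via finitely many components of $\{\vf=\pm\ve\}$ and of $\{|\vf|\ge\ve\}$ meeting the compact set $K$ together with \cref{sinc} and \cref{maxed}, is correct and in line with the finiteness facts the paper itself uses in the proof of \cref{maxed}.
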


Components of $Y_\vf(\ve)$ might be non-compact
and might have infinitely many boundary components.
But since they are incompressible in $S$, their interiors are surfaces of finite type.

\begin{proof}[Proof of \cref{finite}]
The components of $S\setminus K$ are diffeomorphic to open annuli,
hence their fundamental group is infinite cyclic.
Moreover, $C$ is incompressible in $S$,
hence also in the component of $S\setminus K$ containing it.
Therefore the fundamental group of $C$ is either trivial or infinite cyclic.
Now, as a domain in a cylinder, $C$ is orientable.
Hence the interior of $C$ is diffeomorphic to an open disc or an open annulus.
\end{proof}

We denote by $X_\vf(\ve)$ the union of the $F_2$-components of $Y_\vf(\ve)$
and by $X_\vf^\pm(\ve)$ the ones among them which belong to $Y_\vf^\pm(\ve)$.
Then $X_\vf(\ve)$ is the disjoint union of $X_\vf^+(\ve)$ and $X_\vf^-(\ve)$.

\begin{lem}\label{euler0}
If $\ve'\le\ve$ are $\vf$-regular, then $X_\vf^\pm(\ve)\subseteq X_\vf^\pm(\ve')$.
\end{lem}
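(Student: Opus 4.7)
The plan is to first prove the set-theoretic monotonicity $Y_\vf(\ve)\subseteq Y_\vf(\ve')$ whenever $\ve'\le\ve$, and then to track an arbitrary $F_2$-component of $Y_\vf^+(\ve)$ into a single $F_2$-component of $Y_\vf^+(\ve')$; the case of $Y_\vf^-$ is identical.

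For the monotonicity I will work from the description in \cref{yfi}.\ref{zdisc}. Since $\ve'\le\ve$, we have $\mathring Z_\vf(\ve')\subseteq\mathring Z_\vf(\ve)$, hence $S\setminus\mathring Z_\vf(\ve)\subseteq S\setminus\mathring Z_\vf(\ve')\subseteq Y_\vf(\ve')$. Any component $K$ of $Z_\vf(\ve)$ contained in the interior of a closed disc $D\subseteq S$ splits as $K\cap\{|\vf|>\ve'\}$, which lies in $S\setminus\mathring Z_\vf(\ve')$, together with the components of $Z_\vf(\ve')$ contained in $K$, each of which also lies in $\mathring D$ and hence belongs to $Y_\vf(\ve')$ by \cref{yfi}.\ref{zdisc}. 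So $Y_\vf(\ve)\subseteq Y_\vf(\ve')$.

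Now let $C$ be an $F_2$-component of $Y_\vf^+(\ve)$, with distinguished $C_0\subseteq\{\vf\ge\ve\}$ as in \cref{sinc}. By the characterization of $F_2$-components stated just before \cref{finite}, I may choose loops $c_1,c_2$ in the interior of $C_0$ that generate $F_2$ in $\pi_1(C)$. By the monotonicity above, $C$ is connected and contained in $Y_\vf(\ve')$, so it lies in a unique component $C'$ of $Y_\vf(\ve')$. I then need to check two things: that $C'$ is of type $+$, and that $C'$ is itself an $F_2$-component.

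For the type, I argue by contradiction. If $C'\in Y_\vf^-(\ve')$, then its distinguished $C_0'$ is a component of $\{\vf\le-\ve'\}$, which is disjoint from $C_0\subseteq\{\vf\ge\ve\}$. So $C_0$ lies in the union of the maximal $\ve'$-discs of $C'$, and by connectedness sits inside a single such closed disc $D'\subseteq S$. Then $c_1,c_2$ are null-homotopic in $D'$ and hence in $S$, which by incompressibility of $C$ in $S$ (\cref{yfi}.\ref{cinco}) forces them to be null-homotopic in $C$; this contradicts that they generate $F_2$. So $C'\in Y_\vf^+(\ve')$. The $F_2$-property of $C'$ is then immediate: incompressibility of $C$ in $S$ passes to incompressibility of $C$ in $C'\subseteq S$, so $\pi_1(C)\hookrightarrow\pi_1(C')$ is injective and $c_1,c_2$ still generate $F_2$ in $\pi_1(C')$. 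Thus $C\subseteq C'\subseteq X_\vf^+(\ve')$. The main obstacle is precisely the type-tracking step, which is what forces the use of the specific $F_2$-loops from the interior of $C_0$ (rather than merely somewhere in $C$) together with incompressibility, in order to rule out absorption of $C_0$ into a maximal $\ve'$-disc attached from the opposite sign side.
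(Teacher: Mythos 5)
Your proof is correct and follows essentially the same route as the paper: the monotonicity $Y_\vf(\ve)\subseteq Y_\vf(\ve')$ via \cref{yfi}.\ref{zdisc}, persistence of the $F_2$ through incompressibility (\cref{yfi}.\ref{cinco}), and the sign-tracking contradiction built from $F_2$-loops in the interior of $C_0$ together with the decomposition of $C'$ from \cref{sinc} and the disjointness of maximal $\ve'$-discs from \cref{maxed} (the paper phrases the final contradiction via the loops meeting $\{\vf\le-\ve'\}$, you via their null-homotopy, which is the same argument). The only step wanting one more word is ``by connectedness $C_0$ sits inside a single closed disc'': pairwise disjoint closed sets alone do not separate a connected set, but for $\ve'<\ve$ (the case $\ve'=\ve$ being trivial) $C_0\subseteq\{\vf\ge\ve\}$ misses the boundary circles of the maximal $\ve'$-discs, which lie in $\{|\vf|=\ve'\}$, so $C_0$ lies in their pairwise disjoint open interiors and connectedness does apply.
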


\begin{proof}
By definition, $Z_\vf(\ve')\subseteq Z_\vf(\ve)$.
If a component of $Z_\vf(\ve)$
is contained in the interior of an embedded closed disc,
then also all the components of $Z_\vf(\ve')$ it contains.
It follows that $Y_\vf(\ve)\subseteq Y_\vf(\ve')$.
By \cref{yfi}.\ref{cinco}, if $C$ is an $F_2$-component of $X_\vf^\pm(\ve)$,
then also the component $C'$ of $X_\vf^\pm(\ve')$ containing it. 
Hence $X_\vf(\ve)\subseteq X_\vf(\ve')$.

Now let $C$ be a component of $X_\vf^+(\ve)$
and suppose that the component $C'$ of $X_\vf(\ve')$ 
containing $C$ belongs to $X_\vf^-(\ve')$.
By \cref{sinc}, $C$ is the union of a component $C_0$ of $\{\vf\ge\ve\}$
with maximal $\ve$-discs.
Now let $x$ be a point in the interior of $C_0$ and choose loops $c_0$ and $c_1$
in the interior of $C_0$ generating an $F_2$ in $\pi_1(C,x)$;
compare with our discussion further up.
In particular, $\vf>\ve$ along $c_0$ and $c_1$.
Under the inclusion $C\to C'$,
$c_0$ and $c_1$ cannot be contained in the maximal $\ve'$-discs belonging to $C'$
because they would be homotopic to zero in $S$ otherwise.
But then they must meet $\{\vf\le-\ve'\}$, a contradiction.
We conclude that $X_\vf^+(\ve)\subseteq X_\vf^+(\ve')$;
similarly $X_\vf^-(\ve)\subseteq X_\vf^-(\ve')$.
\end{proof}

We view the funnels of $S$ as vertical and pointing upwards.
In this picture, a Jordan curve $c$ in a funnel $F$,
which is a generator of the fundamental group of $F$,
cuts $S\setminus c$ in two open pieces,
the set $F_c$ of \emph{points above $c$} and the set of remaining points,
sometimes called the \emph{points below $c$}.
The set of points above $c$ is contained in $F$ and is a funnel around the same end as $F$,
the set of points below $c$ is not contained in $F$.

We call Jordan curves in $F$, which generate the fundamental group of $F$,
\emph{cross sections of $F$}.
We say that a cross section $c$ of $F$ is $(\vf,\ve)$-regular
if it meets the curves $\{\vf=\pm\ve\}$ transversally.
By transversality theory, any cross section of $F$ can be approximated
by smooth $(\vf,\ve)$-regular cross sections of $F$ in any reasonable topology.

Our aim is now to describe the structure of $X_\vf^\pm(\ve)$ with respect to $F$.
Let $c$ be a $(\vf,\ve)$-regular cross section of $F$.
Then $c$ intersects $\{\vf=\ve\}$ transversally.
We emphasize the following three cases:
\begin{compactenum}[1)]
\item\label{easy1}
$F_c\subseteq X_\vf^\pm(\ve)$.
\item\label{easy2}
$c\cap X_\vf^\pm(\ve)=\emptyset$,
\item\label{strips}
$c\cap\partial X_\vf^\pm(\ve)\ne\emptyset$.
\end{compactenum}
We now want to normalize the position of a $(\vf,\ve)$-regular cross section $c$ of $F$
in such a way that the part of $X_\vf^\pm(\ve)$ below $c$
is homotopy equivalent to $X_\vf^\pm(\ve)$.
If it is possible to choose $c$ such that \ref{easy1}) or \ref{easy2}) hold,
then any such choice will be a normalization.
In the remaining case, $c\cap\partial X_\vf^\pm(\ve)\ne\emptyset$
for any choice of cross section of $F$.
Since $\ve$ is $\vf$-regular, $c\cap\{\vf=\pm \ve\}$ is finite.
By \cref{sinc}, $c\cap\partial X_\vf^\pm(\ve)\subseteq c\cap\{\vf=\pm \ve\}$.

Since $\{\vf=\pm\ve\}$ is a properly embedded submanifold (of dimension $1$) of $S$,
the components of $F_c\cap\{\vf=\pm\ve\}$ above $c$ are of the following two types:
Either they are Jordan segments with endpoints on $c$ or they are Jordan rays with
one end on $c$ and escaping to infinity along the other.
We call these components \emph{recurrent} and \emph{escaping}, respectively.
Since $\{\vf=\pm\ve\}$ is properly embedded, escaping components in $F_c\cap\{\vf=\pm\ve\}$ 
extend continuously as Jordan curves to the one point compactifictaion of $F_c$ at infinity.

If $a$ is a recurrent component, then there is a segment $b$ in $c$ such that
$a\cup b$ is a null homotopic Jordan loop in $F$.
The disc bounded by $a\cup b$ will be called the part of $F_c$ below $a$.
Since $c\cap\{\vf=\pm\ve\}$ is finite, there are only finitely many such discs,
and they are ordered by inclusion.
The components $a$ above maximal such discs will be called \emph{uppermost}.
We replace the segments $b$ of $c$ below such maximal discs by the
corresponding uppermost components $a$
and obtain a piecewise smooth cross section of $F$.
Pushing this cross section upwards and smoothing it appropriately,
we arrive at the normalized third case:
$c$ is $(\vf,\ve)$-regular and the interior of $F_c\cap X_\vf^\pm(\ve)$
is a finite union of open discs,
bounded by segments of $c$, escaping components of $F_c\cap \partial X_\vf^+(\ve)$,
and, possibly, boundary lines of $X_\vf^\pm(\ve)$
which start and end at infinity in $F$.
Note that boundary circles of $X_\vf^\pm(\ve)$ cannot occur,
since they would not be null homotopic and we would be in the second case above.

In all three cases, after normalization,
the part of $X_\vf^\pm(\ve)$ below $c$ is homotopy equivalent to $X_\vf^\pm(\ve)$.
With a bit of more work, it would be possible to show that the part of $X_\vf^\pm(\ve)$
below $c$ is a deformation retract of $X_\vf^\pm(\ve)$.
The technical problem consists in handling the components of $F_c\cap \partial X_\vf^+(\ve)$
above $c$ which contain boundary lines which come from and return back to infinity in $F_c$.
These boundary lines cut out \emph{infinite peninsulas} which are hanging down from infinity
in our picture of $F_c$.
Since we do not need more than homotopy equivalence, we leave it with these remarks.

Consider a pair $(\ve,K)$,
where $\ve>0$ and $K$ is a smooth and compact subdomain of $S$
such that $S\setminus K$ consists of funnels.
Say that the pair $(\ve,K)$ is \emph{$\vf$-regular} if $\ve$ is $\vf$-regular
and $\partial K$ consists of normalized $(\vf,\ve)$-regular cross sections as above.
For any such pair $(\ve,K)$, define
\begin{equation}\label{xvek}
  X_\vf(\ve,K) = X_\vf(\ve) \cap K
  \quad\text{and}\quad
  X_\vf^\pm(\ve,K) = X_\vf^\pm(\ve) \cap K.
\end{equation}
Note that $X_\vf(\ve,K)$ is the disjoint union $X_\vf^+(\ve,K)\dot\cup X_\vf^-(\ve,K)$.

By what we said above,
the inclusions $X_\vf^\pm(\ve,K)\to X_\vf^\pm(\ve)$ are homotopy equivalences.
Since $K$ is a deformation retraction of $S$,
 \cref{sub} implies
\begin{equation}\label{eular}
  \chi(S) = \chi(K) \le \chi(X_\vf(\ve,K)) = \chi(X_\vf^+(\ve,K)) + \chi(X_\vf^-(\ve,K)) < 0.
\end{equation}
By \cref{yfi}.\ref{cinco},
the components of $X_\vf(\ve,K)$ are incompressible in $S$.
The following result is an immediate consequence of \cref{sub} and \cref{euler0}.

\begin{lem}\label{euler}
If $(\ve,K)$ and $(\ve',K')$ are $\vf$-regular with $\ve'\le\ve$ and $K\subseteq K'$, then
\begin{equation*}
  X_\vf^\pm(\ve,K)\subseteq X_\vf^\pm(\ve',K')
  \quad\text{and}\quad
  \chi(X_\vf^\pm(\ve',K'))\le\chi(X_\vf^\pm(\ve,K)).
\end{equation*}
Moreover, if $\chi(X_\vf(\ve,K)) = \chi(X_\vf(\ve',K'))$,
then $X_\vf^\pm(\ve',K')$ arises from $X_\vf^\pm(\ve,K)$
by attaching annuli, cross caps, and lunes
along boundary curves of $X_\vf^\pm(\ve,K)$. \qed
\end{lem}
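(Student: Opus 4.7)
The strategy is to treat the lemma as a component-by-component application of \cref{sub}, using \cref{euler0} for the inclusions and the already recorded incompressibility of the components of $X_\vf(\ve,K)$ in $S$. There is no substantial obstacle; the work is essentially bookkeeping, once one verifies the transversality needed to invoke \cref{sub}.

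For the inclusion, $\ve'\le\ve$ combined with \cref{euler0} yields $X_\vf^\pm(\ve)\subseteq X_\vf^\pm(\ve')$, and intersecting with $K\subseteq K'$ gives $X_\vf^\pm(\ve,K)\subseteq X_\vf^\pm(\ve',K')$.

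For the Euler-characteristic inequality, I would argue componentwise. Fix a component $R$ of $X_\vf^\pm(\ve',K')$ and set $X_R=R\cap X_\vf^\pm(\ve,K)$. The components of $X_R$ are precisely those components of $X_\vf^\pm(\ve,K)$ that lie in $R$. Recall from the discussion preceding the lemma that the inclusion $X_\vf^\pm(\ve',K')\hookrightarrow X_\vf^\pm(\ve')$ is a homotopy equivalence and the latter is a union of $F_2$-components; hence $\pi_1(R)$ contains $F_2$ and in particular $R$ is not a sphere. By \cref{yfi}.\ref{cinco}, the components of $X_R$ are incompressible in $S$, hence also in $R\subseteq S$. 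The $\vf$-regularity of the two pairs guarantees that $\partial R$ and $\partial X_R$ are piecewise smooth and transversal where they meet: the level curves $\{\vf=\pm\ve\}$ and $\{\vf=\pm\ve'\}$ are disjoint when $\ve'<\ve$, and the normalized cross-section parts of $\partial K$ and $\partial K'$ may be placed in general position. Thus \cref{sub} applies to $(R,X_R)$ and gives $\chi(R)\le\chi(X_R)$; the inequality is trivial if $X_R=\emptyset$, since then $\chi(R)\le 0=\chi(X_R)$ because $\pi_1(R)\supseteq F_2$. Summing over the components $R$ of $X_\vf^\pm(\ve',K')$ and using that the nonempty $X_R$ partition $X_\vf^\pm(\ve,K)$ produces
\[
  \chi(X_\vf^\pm(\ve',K')) \le \chi(X_\vf^\pm(\ve,K)).
\]

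For the equality case, summing the $+$ and $-$ inequalities shows that equality of totals forces equality for each sign separately, whence $\chi(R)=\chi(X_R)$ for every component $R$. The equality clause of \cref{sub} then identifies each component of $R\setminus\mathring X_R$ as an annulus, cross cap, or lune attached to $X_R$ along a boundary curve; this is precisely the description of how $X_\vf^\pm(\ve',K')$ arises from $X_\vf^\pm(\ve,K)$ asserted in the lemma.
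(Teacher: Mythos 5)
Your proposal is correct and takes essentially the same route as the paper, which states the lemma without a separate argument as an immediate consequence of \cref{sub} and \cref{euler0}: the inclusion comes from \cref{euler0} intersected with $K\subseteq K'$, and the Euler characteristic inequality and equality case come from applying \cref{sub} componentwise, exactly as you spell out. The extra details you supply (the empty $X_R$ case via $\pi_1(R)\supseteq F_2$, incompressibility in $R$ inherited from incompressibility in $S$, and the transversality of the two boundaries) are consistent with the paper's surrounding discussion.
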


As a direct application of \eqref{eular} and \cref{euler}, we get the

\begin{cor}\label{eulim}
There exists a $\vf$-regular pair $(\ve_\vf,K_\vf)$ such that
\begin{equation*}
  \chi(X_\vf^\pm(\ve,K))=\chi(X_\vf^\pm(\ve_\vf,K_\vf))
\end{equation*}
for all $\vf$-regular pairs $(\ve,K)$ with $\ve\le\ve_\vf$ and $K_\vf\subseteq K$. \qed
\end{cor}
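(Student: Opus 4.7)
The plan is to exploit that the two integer-valued functions $(\ve,K)\mapsto\chi(X_\vf^\pm(\ve,K))$ are monotone non-increasing under refinement of the pair $(\ve,K)$, by \cref{euler}, and are bounded both above and below; simultaneously stabilizing them will produce the desired $(\ve_\vf,K_\vf)$. First I would verify that the set of $\vf$-regular pairs is directed under the partial order in which $(\ve',K')$ refines $(\ve,K)$ when $\ve'\le\ve$ and $K\subseteq K'$: given two such pairs, Sard's theorem supplies a $\vf$-regular $\ve''\le\min(\ve,\ve')$, while any smooth compact subdomain of $S$ containing $K\cup K'$ can be arranged to have boundary consisting of normalized $(\vf,\ve'')$-regular cross sections of the funnels via the normalization procedure described just before \cref{euler}.

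Next I would establish two-sided bounds on $\chi(X_\vf^\pm(\ve,K))$. By the homotopy equivalences $X_\vf^\pm(\ve,K)\to X_\vf^\pm(\ve)$ recalled just before \cref{euler}, each component of $X_\vf^\pm(\ve,K)$ has fundamental group containing $F_2$, hence has Euler characteristic at most $-1$; therefore $\chi(X_\vf^\pm(\ve,K))\le 0$. Combined with the lower bound $\chi(X_\vf^+(\ve,K))+\chi(X_\vf^-(\ve,K))\ge\chi(S)$ from \eqref{eular}, each individual $\chi(X_\vf^\pm(\ve,K))$ then lies in the fixed finite range $[\chi(S),0]$.

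With these ingredients in hand the conclusion follows at once. Fix any $\vf$-regular pair $(\ve_0,K_0)$ and let $m$ be the minimum of $\chi(X_\vf^+)+\chi(X_\vf^-)$ over all $\vf$-regular refinements of $(\ve_0,K_0)$, which is attained since the sum is integer-valued and bounded below. Choose a refinement $(\ve_\vf,K_\vf)$ realizing $m$. For any further $\vf$-regular refinement $(\ve,K)$ with $\ve\le\ve_\vf$ and $K_\vf\subseteq K$, \cref{euler} gives $\chi(X_\vf^\pm(\ve,K))\le\chi(X_\vf^\pm(\ve_\vf,K_\vf))$ for both signs; since the sum cannot drop below $m$, both of these inequalities must in fact be equalities. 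The only slightly subtle point is the simultaneous stabilization of two monotone quantities, which is handled by passing to their sum; beyond this, the corollary is essentially immediate from \cref{euler} and \eqref{eular}.
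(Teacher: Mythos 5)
Your argument is correct and is essentially the paper's own: the corollary is stated there as a direct consequence of \eqref{eular} and \cref{euler}, i.e.\ the quantities $\chi(X_\vf^\pm(\ve,K))$ are integer-valued, non-increasing under refinement, and their sum is bounded below by $\chi(S)$, so they stabilize. Your device of minimizing the sum $\chi(X_\vf^+)+\chi(X_\vf^-)$ to stabilize both signs simultaneously is exactly the intended "direct application," and the extra remarks on directedness and the upper bound $\chi\le 0$ are harmless but not needed.
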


Now we assume throughout that we are in the \emph{stable range},
that is, we consider $\vf$-regular pairs $(\ve,K)$ with $\ve\le\ve_\vf$
and $K_\vf\subseteq K$.
For such a pair $(\ve,K)$,
we study the isotopy type of the triples $(S,X_\vf^+(\ve,K),X_\vf^-(\ve,K))$.
Here and below we mean \emph{compactly supported topological isotopy}
when speaking of isotopy.
By the definition of $\vf$-regularity and the discussion leading to it,
the isotopy type of $(S,X_\vf^+(\ve,K),X_\vf^-(\ve,K))$ does not depend on $K$.
Hence to compare it with the isotopy type of another such triple
$(S,X_\vf^+(\ve',K'),X_\vf^-(\ve',K'))$,
we may assume that $\ve'<\ve$ and that $K$ is contained in the interior of $K'$.

Now $X_\vf^\pm(\ve,K)$ has two kinds of boundary circles:
The first kind consists of boundary circles in the interior of $K$,
the second kind consists of segments of boundary circles of $\partial K$
concatenated with segments of $\partial X_\vf^\pm(\ve)\subseteq\{\vf=\pm\ve\}$
that run inside $K$ from $\partial K$ to $\partial K$.
By the definition of $\vf$-regularity,
boundary circles of $K$ do not occur as boundary circles of  $X_\vf^\pm(\ve,K)$.
The first kind of boundary circles of  $X_\vf^\pm(\ve,K)$ is smooth,
the second kind is piecewise smooth with vertices in the points,
where the circle enters or respectively leaves $\partial K$.
The boundary of $X_\vf^\pm(\ve',K')$ consists of the corresponding two kinds
of boundary circles.

We start with a closer look at the gluings required
to obtain $X_\vf^\pm(\ve',K')$ from $X_\vf^\pm(\ve,K)$.
Since their Euler characteristics coincide,
only annuli, cross caps, and lunes are concerned; compare with \cref{sub}.
Now $\ve'<\ve$ and $K$ is contained in the interior of $K'$.
Hence the boundaries of $X_\vf^\pm(\ve,K)$ and $X_\vf^\pm(\ve',K')$
are disjoint, and therefore no lunes occur.

Suppose that a boundary circle $c$ of $X_\vf^\pm(\ve,K)$
bounds a cross cap $C$ in the complement (of the interior) of $X_\vf^\pm(\ve,K)$ in $S$.
Now $C$ decomposes $S$ into two connected regions,
the points inside $C$ and the points outside $C$.
Since $\partial C$ is contained in $\partial X_\vf^\pm(\ve,K)$,
we conclude that a curve from a point inside $C$ to a point outside
of $C\cup X_\vf^\pm(\ve,K)$ has to pass through $X_\vf^\pm(\ve,K)$.
In particular, $C$ cannot contain points on or beyond boundary circles of $K$
since otherwise it would also contain the corresponding funnels,
a contradiction to the compactness of $C$.
We conclude that in the gluing required to obtain $X_\vf^\pm(\ve',K')$ from $X_\vf^\pm(\ve,K)$,
the cross caps, including their boundary circles,
are contained in the interior of $K$.
In particular, $\vf=\pm\ve$ along their boundary circles.

Since annuli have two boundary circles,
the discussion of them involves case distinctions.
Suppose first that two boundary circles $c_0$ and $c_1$ of $X_\vf^\pm(\ve,K)$
bound a closed annulus $A$
in the complement (of the interior) of $X_\vf^\pm(\ve,K)$ in $S$.
Then any curve from a point inside $A$ to a point outside
of $A\cup X_\vf^\pm(\ve,K)$ has to pass through $X_\vf^\pm(\ve,K)$.
As in the case of cross caps,
we get that $A$ cannot contain points on or beyond boundary circles of $K$
since otherwise it would also contain the corresponding funnels,
a contradiction to the compactness of $A$.
We conclude that
in the gluing required to obtain $X_\vf^\pm(\ve',K')$ from $X_\vf^\pm(\ve,K)$,
the annuli $A$ with $\partial A\subseteq\partial X_\vf^\pm(\ve,K)$ 
are contained in the interior of $K$.
In particular, $\vf=\pm\ve$ along their boundary circles.

Finally, an annulus might be glued to $X_\vf^\pm(\ve,K)$ along one boundary circle
such that the second boundary circle belongs to the boundary of $X_\vf^\pm(\ve',K')$.
Such gluings do not change the isotopy type of $X_\vf^\pm(\ve',K')$ in $S$,
but gluings of cross caps and annuli as above do.
To remedy this, attach all annuli and cross caps to $X_\vf^\pm(\ve,K)$
which are contained in the interior of $K$ and have their boundary in $X_\vf^+(\ve,K)$
and call the resulting subsurface $S_\vf^\pm(\ve,K)$.

Note that no component of $X_\vf(\ve,K)$ is contained in any of the attached
cross caps and annuli since the components of $X_\vf(\ve,K)$ are incompressible in $S$
and their fundamental groups contain an $F_2$.
Hence
\begin{equation*}
  S_\vf^+(\ve,K) \cap S_\vf^-(\ve,K) = \emptyset.
\end{equation*}
Note also that attaching annuli and cross caps does not change the Euler characteristic.

\begin{lem}\label{charac}
If $(\ve,K),(\ve',K')$ are $\vf$-regular with $\ve'\le\ve\le\ve_\vf$
and $K_\vf\subseteq K\subseteq K'$, then
\begin{equation*}
  (S,S_\vf^+(\ve,K),S_\vf^-(\ve,K))
  \quad\text{and}\quad
  (S,S_\vf^+(\ve',K'),S_\vf^-(\ve',K'))
\end{equation*}
are isotopic in $S$.
\end{lem}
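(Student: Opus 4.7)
My plan is to reduce to a convenient comparison of two pairs and then quote the structural analysis carried out in the three paragraphs immediately preceding the lemma. Since isotopy is an equivalence relation, I may compare any two pairs in the stable range via transitivity, so I can assume without loss of generality that $\ve'<\ve$ and that $K$ lies in the interior of $K'$. Under this assumption the boundaries $\partial X_\vf^\pm(\ve,K)$ and $\partial X_\vf^\pm(\ve',K')$ are disjoint, so \cref{euler} applied to the two $\vf$-regular pairs (whose Euler characteristics agree by being in the stable range) gives that $X_\vf^\pm(\ve',K')$ is obtained from $X_\vf^\pm(\ve,K)$ by attaching a finite collection of annuli and cross caps only (no lunes, since lunes would require shared boundary arcs).

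Next I would apply the classification of these attachments that the authors already established in the paragraphs preceding the lemma. The key dichotomy is between attachments whose entire attaching/new boundary structure lies inside $\mathring K$ and those that reach out to $\partial K'$. The analysis via connectedness of $S\setminus C$ shows that every cross cap $C$, and every annulus $A$ with both boundary circles in $\partial X_\vf^\pm(\ve,K)$, is forced to lie in $\mathring K$ (otherwise $C$ or $A$ would have to swallow a whole funnel, contradicting compactness). By the very definition of $S_\vf^\pm(\ve,K)$, all such cross caps and two-sided annuli are already built into $S_\vf^\pm(\ve,K)$. Therefore the only attachments needed to go from $S_\vf^\pm(\ve,K)$ to $S_\vf^\pm(\ve',K')$ are annuli that meet $\partial X_\vf^\pm(\ve,K)$ in exactly one boundary circle and have their other boundary circle on $\partial X_\vf^\pm(\ve',K')\subseteq\partial K'\cup\{\vf=\pm\ve'\}$; these live in the cylindrical region $K'\setminus\mathring K$ together with the transverse strips between $\{\vf=\pm\ve\}$ and $\{\vf=\pm\ve'\}$.

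With this picture in hand, I would construct the required ambient isotopy of $S$ explicitly. Each one-sided attaching annulus $A$ is a closed collar glued to a single boundary circle $c\subseteq\partial S_\vf^\pm(\ve,K)$, so there is a compactly supported isotopy of $S$, supported in a slightly larger collar of $A$, that pushes $c$ to the opposite boundary circle of $A$ and is the identity outside. Because $S_\vf^+(\ve,K)\cap S_\vf^-(\ve,K)=\emptyset$ (as noted just before the lemma), and because distinct one-sided annuli attached to $S_\vf^\pm(\ve,K)$ have disjoint interiors, I can perform these finitely many collar isotopies simultaneously by concatenating their supports; the composite isotopy carries $S_\vf^+(\ve,K)$ to $S_\vf^+(\ve',K')$ and $S_\vf^-(\ve,K)$ to $S_\vf^-(\ve',K')$ inside the same ambient $S$, which is exactly the asserted isotopy of triples.

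The main obstacle I anticipate is bookkeeping at the boundary $\partial K$: the second kind of boundary arcs of $X_\vf^\pm(\ve,K)$ (pieces of $\partial K$ concatenated with arcs of $\{\vf=\pm\ve\}$) must be matched, via the collar isotopies, with their analogues on $\partial K'$, and one must verify that the normalization of the cross sections cutting off funnels, together with the description of $F_c\cap X_\vf^\pm(\ve)$ as discs bounded by segments of $c$, escaping boundary arcs, and possible boundary lines, guarantees that no attached annulus accidentally crosses from one funnel to another or from $S_\vf^+$ to $S_\vf^-$. Once this is checked, the isotopy extension theorem for the collars of the finitely many attached annuli assembles the local pushes into a single global compactly supported isotopy of $S$.
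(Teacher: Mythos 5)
Your first two steps track the paper's own setup (reduction to $\ve'<\ve$, $K\subseteq\mathring K'$, no lunes, and the fact that cross caps and annuli attached along $\partial X_\vf^\pm(\ve,K)$ lie in $\mathring K$ and hence sit inside $S_\vf^\pm(\ve,K)$). But the step beginning ``Therefore the only attachments needed to go from $S_\vf^\pm(\ve,K)$ to $S_\vf^\pm(\ve',K')$ are one-sided annuli'' is a non sequitur, and it is exactly where the real content of the lemma lies. What you have shown is only that $X_\vf^\pm(\ve',K')$ is contained in $S_\vf^\pm(\ve,K)$ together with one-sided annuli. The target of the isotopy, however, is $S_\vf^\pm(\ve',K')$, which by definition is $X_\vf^\pm(\ve',K')$ with \emph{its own} collection of cross caps and annuli attached along boundary circles of $X_\vf^\pm(\ve',K')$ inside $\mathring K'$. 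A priori one of these could be a cross cap, or an annulus with both boundary circles on $\partial X_\vf^\pm(\ve',K')$, that has no counterpart at the level $(\ve,K)$; such an attachment changes the isotopy type, so your collar-push construction would not reach $S_\vf^\pm(\ve',K')$.

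The paper's proof is devoted precisely to ruling this out, in both directions: if a boundary circle $c$ of $X_\vf^\pm(\ve',K')$ bounds a cross cap $C'$ in $S$, then $\vf=\pm\ve'$ along $c$, $c$ is the free boundary of a one-sided annulus $A$ attached to $X_\vf^\pm(\ve,K)$, and $C=A\cup C'$ is a cross cap bounded by a boundary circle of $X_\vf^\pm(\ve,K)$ lying in $\mathring K$, hence already swallowed by $S_\vf^\pm(\ve,K)$; similarly, an annulus $A'$ bounded by two boundary circles of $X_\vf^\pm(\ve',K')$ extends by two such one-sided annuli to an annulus with boundary on $\partial X_\vf^\pm(\ve,K)$ contained in $\mathring K$. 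Only after this bidirectional matching does it follow that $S_\vf^\pm(\ve',K')$ differs from $S_\vf^\pm(\ve,K)$ by collar (one-sided) annuli alone, at which point your final isotopy argument (which is fine in spirit and matches what the paper implicitly uses) applies. So you need to add the converse analysis; without it the proof is incomplete.
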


\begin{proof}
After the above discussion leading to the definition of $S_\vf^\pm(\ve,K)$,
we have the following remaining issues: 

If a boundary circle $c$ of $X_\vf^\pm(\ve,K)$ bounds a cross cap $C$ in $S$,
then either already $C\subseteq X_\vf^\pm(\ve',K')$
or else an annulus $A\subseteq C$ is attached to $c$ along one of its boundary circles
and the other boundary circle $c'$ belongs to the boundary of $X_\vf^\pm(\ve',K')$.
Then $c'$ bound a cross cap $C'$ in the complement (of the interior) of $A$ in $C$
and $C=A\cup C'$.

Conversely, if a boundary circle $c$ of $X_\vf^\pm(\ve',K')$ bounds a cross cap $C'$ in $S$,
then $c$ is contained in the interior of $K'$ and thus $\vf=\pm\ve'$ along $c$.
We conclude that $c$ is a boundary circle of an annulus $A$ attached to $X_\vf^\pm(\ve,K)$
along the other boundary circle of $A$.
Thus $C=A\cup C'$ is a cross cap in $S$ with $\partial C$
a boundary circle of $X_\vf^\pm(\ve,K)$.
By the discussion further up we obtain that $C$ is in the interior of $K$.

If boundary circles $c_0$ and $c_1$ of $X_\vf^\pm(\ve,K)$ bound an annulus $A$ in $S$,
then either already  $A\subseteq X_\vf^\pm(\ve',K')$
or else disjoint annuli $A_0,A_1\subseteq A$ are attached to $c_0$ and $c_1$,
each along one of its boundary circles, and the other boundary circles $c_0'$ and $c_1'$
bound an annulus $A'\subseteq A$ between $A_0$ and $A_1$.
Then $A=A_0\cup A'\cup A_1$.

Conversely, if boundary circles $c_0$ and $c_1$ of $X_\vf^\pm(\ve',K')$
bound an annulus $A'$ in $S$,
then $A'$ is contained in the interior of $K'$ and thus $\vf=\pm\ve'$ along $\partial A'$.
Arguing as in the case of cross caps, we get annuli $A_0$ and $A_1$
with one boundary circle in $X_\vf^\pm(\ve,K)$ and the other equal to $c_0$ and $c_1$,
respectively.
Thus $A=A_0\cup A'\cup A_1$ is an annulus in $S$ such that $\partial A$
lies in $X_\vf^\pm(\ve,K)$. 
By the discussion further up we obtain that $A$ is in the interior of $K$.
\end{proof}

We call the isotopy type of the triple $(S,S_\vf^+(\ve,K),S_\vf^-(\ve,K))$
the \emph{type of $\vf$} and the Euler characteristic of $S_\vf(\ve,K)$
the \emph{characteristic of $\vf$}.

\begin{lem}\label{isoto}
If $\psi$ is a non-trivial finite linear combination of eigenfunctions of $S$
with corresponding eigenvalues $\le\Lambda(S)$,
with the same characteristic as $\vf$, and suffciently close to $\vf$,
then the types of $\vf$ and $\psi$ coincide.
\end{lem}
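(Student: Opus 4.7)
The plan is to fix a $\vf$-regular pair $(\ve,K)$ in the stable range for $\vf$, to check that for $\psi$ sufficiently $C^1$-close to $\vf$ on $K$ the same pair is $\psi$-regular and lies in the stable range for $\psi$, and to build an ambient isotopy of $S$ identifying the triples $(S,S_\vf^+(\ve,K),S_\vf^-(\ve,K))$ and $(S,S_\psi^+(\ve,K),S_\psi^-(\ve,K))$.

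I would first choose $(\ve,K)$ with $\ve<\ve_\vf$ and $K_\vf$ contained in the interior of $K$. By \cref{finite}, all $F_2$-components of $X_\vf^\pm(\ve)$ intersect $K_\vf$, so the topology of $X_\vf^\pm(\ve,K)$ is concentrated well inside $K$, and $\partial K$ meets $\{\vf=\pm\ve\}$ transversally by $\vf$-regularity. For $\psi$ close enough to $\vf$ in $C^1(K)$, the values $\pm\ve$ remain regular for $\psi|_K$, the transversality with $\partial K$ persists, and the normalization of $\partial K$ (the combinatorial choice of uppermost recurrent components discussed before \cref{charac}) is preserved; thus $(\ve,K)$ is also $\psi$-regular.

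Next, I would use the linear homotopy $\vf_t=(1-t)\vf+t\psi$. For $\psi$ close enough to $\vf$, each $\vf_t$ still has $\pm\ve$ as regular values on $K$, so integrating a vector field supported near $\{\vf_t=\pm\ve\}$ (for instance $-(\partial_t\vf_t)\nabla\vf_t/|\nabla\vf_t|^2$ suitably cut off) yields a compactly supported ambient isotopy $\Phi_t$ of $S$ with $\Phi_0=\id$ and
\begin{equation*}
\Phi_1\bigl(\{\vf=\pm\ve\}\cap K\bigr)=\{\psi=\pm\ve\}\cap K,\quad
\Phi_1\bigl(\{\pm\vf\ge\ve\}\cap K\bigr)=\{\pm\psi\ge\ve\}\cap K.
\end{equation*}
Since $\Phi_1$ is a homeomorphism, it preserves the collection of components of $Z_\vf(\ve)\cap K$ contained in closed discs, hence carries $Y_\vf^\pm(\ve)\cap K$ onto $Y_\psi^\pm(\ve)\cap K$, and sends $F_2$-components to $F_2$-components (a $\pi_1$-condition on the incompressible components). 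Thus $\Phi_1$ maps $X_\vf^\pm(\ve,K)$ onto $X_\psi^\pm(\ve,K)$, and the annuli and cross caps attached in the interior of $K$ to promote $X_\vf^\pm$ to $S_\vf^\pm$ are topologically determined, so $\Phi_1(S_\vf^\pm(\ve,K))=S_\psi^\pm(\ve,K)$.

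Finally, I must check that $(\ve,K)$ lies in the stable range for $\psi$. The equality $\chi(X_\psi^\pm(\ve,K))=\chi(X_\vf^\pm(\ve,K))$ delivered by $\Phi_1$, the hypothesis that $\vf$ and $\psi$ share the same characteristic, and the monotonicity in \cref{euler} applied to a common refinement of $(\ve,K)$ and $(\ve_\psi,K_\psi)$ together force the individual Euler characteristics $\chi(X_\psi^\pm(\ve',K'))$ to remain constant for all $(\ve',K')$ with $\ve'\le\ve$ and $K\subseteq K'$. Hence $(\ve,K)$ is stable for $\psi$ and the triple $(S,S_\psi^+(\ve,K),S_\psi^-(\ve,K))$ represents the type of $\psi$. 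The principal obstacle is assembling $\Phi_t$ so that it simultaneously respects the sign structure of the superlevel sets, transversality with $\partial K$, and the combinatorial normalization; the $C^1$-linear-homotopy argument outlined above handles this provided $\psi$ is sufficiently close to $\vf$, which is what ``sufficiently close'' should mean in the statement.
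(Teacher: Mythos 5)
Your outline founders on the fact that $X_\vf^\pm(\ve,K)$ and $X_\psi^\pm(\ve,K)$ are not determined by the restrictions of $\vf$ and $\psi$ to $K$, so no isotopy built from $C^1$-closeness on $K$ alone can identify them. Three global ingredients enter their definition: the maximal $\ve$-discs (closed discs in $S$ that may stick out of $K$), the decision whether a component of $Z_\psi(\ve)$ lies inside an embedded closed disc of $S$, and the decision whether a component of $Y_\psi(\ve)$ is an $F_2$-component; moreover $\psi$-regularity of the pair $(\ve,K)$ includes the normalization of the cross sections $\partial K$, which is a statement about the configuration of $\{\psi=\pm\ve\}$ in the funnels above $\partial K$, where no closeness is assumed and none can be arranged, since $\psi-\vf$ is uncontrolled at infinity. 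Consequently your claims that $(\ve,K)$ is $\psi$-regular, that $\Phi_1$ carries $Y_\vf^\pm(\ve)\cap K$ onto $Y_\psi^\pm(\ve)\cap K$ and sends $F_2$-components to $F_2$-components, and that the equality $\chi(X_\psi^\pm(\ve,K))=\chi(X_\vf^\pm(\ve,K))$ is ``delivered by $\Phi_1$'' are unjustified; in particular nothing excludes extra $F_2$-components of $Y_\psi(\ve)$ whose generating loops only close up outside $K$, or a change in which components of $Z_\psi(\ve)$ get swallowed by $\ve$-discs reaching beyond $K$. Your appeal to the equal-characteristic hypothesis only at the very end is then circular: the Euler characteristic identity you feed into \cref{euler} presupposes exactly the identification that is in question.

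The paper's proof is organized to avoid these claims. It fixes a compact $L\supseteq K$ containing all $\ve$-discs of $\vf$ that meet $K$ and asks for $C^2$-closeness on $L$; a small ambient isotopy leaving $K$ and $\partial K$ invariant matches the level configurations on $L$ and produces, from $\{\psi\ge\ve\}\cap K$, $\{\psi\le-\ve\}\cap K$ and the $K$-parts of the corresponding $\ve$-discs of $\psi$, a surface $T^\pm$ isotopic to $X_\vf^\pm(\ve,K)$, hence incompressible, with fundamental group containing $F_2$ and with $\chi(T^\pm)=\chi(X_\vf^\pm(\ve,K))$. It then chooses a genuinely $\psi$-regular pair $(\ve,K')$ with $K$ in the interior of $K'$, so that $T^\pm\subseteq X_\psi^\pm(\ve,K')$, and only at that point invokes the hypothesis that $\psi$ has the same characteristic as $\vf$, together with \cref{sub} and \cref{euler}, to conclude that $X_\psi^\pm(\ve,K')$ arises from $T^\pm$ by attaching annuli, cross caps, and lunes; the attachments that could change the isotopy type are shown to lie in the interior of $K$ and are absorbed into $S_\psi^\pm$, which yields the isotopy of the triples. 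Some such detour through $T^\pm$, a larger $\psi$-regular pair, and the characteristic hypothesis is necessary; a purely local level-set isotopy over $K$ cannot do the job.
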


\begin{proof}
Let $L$ be a compact neighborhood of $K$
which contains all the $\ve$-discs with respect to $\vf$ which intersect $K$.
Consider a function $\psi$ with the same characteristic as $\vf$
which is $C^2$-close to $\vf$ on $L$.
Then $\pm\ve$ are regular values of $\psi|_L$,
the curves $\psi|_L=\pm\ve$ intersect $\partial K$ transversally,
and there is a small isotopy of $S$ which leaves $K$ and $\partial K$ invariant
which deforms the configuration of curves $\{\psi=\ve\}\cap L$ and $\{\psi=-\ve\}\cap L$
to the configuration of curves $\{\vf=\ve\}\cap L$ respectively $\{\vf=-\ve\}\cap L$
and, therefore, also the subsurfaces $\{\psi\ge\ve\}\cap K$ and $\{\psi\le-\ve\}\cap K$
to the subsurfaces $\{\vf\ge\ve\}\cap K$ respectively $\{\vf\le-\ve\}\cap K$.

Clearly, if a boundary segment of the latter intersects an $\ve$-disc $D$ of $\vf$,
then $D$ is contained in $L$ and corresponds under the isotopy to an $\ve$-disc $B$ of $\psi$.
Attaching the parts $B\cap K$ of such discs, we get a surface $T^\pm$
such that the above isotopy deforms $T^\pm$ to $X_\vf^\pm(\ve,K)$.
In particular, the fundamental group of $T^\pm$ contains an $F_2$,
$T^\pm$ is incompressible in $S$ and
\begin{equation*}
  \chi(T^\pm)=\chi(X_\vf^\pm(\ve,K)).
\end{equation*}
By changing $\ve$ slightly, we can achieve that $\ve$ is also $\psi$-regular.
Then, by what we said, $T^\pm$ is a component of $X_\psi^\pm(\ve,K)$.
Moreover, choosing a $\psi$-regular $(\ve,K')$ with $K$ in the interior of $K'$,
we have $T^\pm\subseteq X_\psi^\pm(\ve,K')$.
Hence $X_\psi^\pm(\ve,K')$ is obtained from $T^\pm$ by attaching annuli,
cross caps, and lunes.
Now annuli where both boundary curves are attached to $T^\pm$ and cross caps
attached to $T^\pm$ are contained in the interior of $K$ and belong to $S_\psi^\pm(\ve,K)$.
We (finally) conclude that $(S,S_\psi^+(\ve,K'),S_\psi^-(\ve,K'))$ is isotopic to
the triple $(S,S_\vf^+(\ve,K),S_\vf^-(\ve,K))$.
\end{proof}


\begin{proof}[End of proof of \cref{small}]
Let $\mathbb E$ be a subspace of $L^2(M)$
which is generated by finitely many eigenfunctions
with corresponding eigenvalues $\le\Lambda(S)$
and denote by $\mathbb S$ the unit sphere in $\mathbb E$
and by $\mathbb P$ the projective space of $\mathbb E$.
\cref{small} follows if any such $\mathbb E$ has dimension at most $-\chi(S)$.

Since $\chi(S_\vf^\pm(\ve,K))=\chi(X_\vf^\pm(\ve,K))$,
\eqref{eular} and \cref{charac} imply that we obtain a partition of $\mathbb S$
into the subsets $\mathbb A_i$ consisting of functions $\vf$
with characteristic $i\in\{-\chi(S),\dots,-1\}$.
By definition, $\vf\in\mathbb A_i$ if and only if $-\vf\in\mathbb A_i$.
Hence the partition of $\mathbb S$ into the sets $\mathbb A_i$
is the preimage of a partition of $\mathbb P$ into subsets $\mathbb B_i$
under the covering projection $\pi\colon\mathbb B\to\mathbb P$.

Now at least one of the subsurfaces $S_\vf^+(\ve,K)$ or $S_\vf^-(\ve,K)$ is nonempty
and contains two loops $c_0$ and $c_1$ with intersection number one.
Then the image of $c_1$ under an isotopy of $S$ will still intersect $c_0$,
and therefore there is no isotopy of $S$ which interchanges the disjoint
subsurfaces $S_\vf^+(\ve,K)$ and $S_\vf^-(\ve,K)$.
Hence the type of $\vf\in\mathbb S$ is different from the type of $-\vf$.
Hence by \cref{isoto}, the covering $\pi$ is trivial over the subsets $\mathbb A_i$.
Now $\mathbb P$ cannot be covered by less than $\dim\mathbb E$ subsets
over which $\pi$ is trivial, by Lemma 8 in \cite{Se}.
We conclude that $\dim\mathbb E\le-\chi(S)$.
\end{proof}



\end{document}